\newtheorem{theorem}{Theorem}
\newtheorem{definition}{Definition}
\newtheorem{lemma}{Lemma}
\newtheorem{remark}{Remark}
\newtheorem{corollary}{Corollary}
\def\C{{\cal{C}}}
\def\U{{\cal{U}}}
\def\G{{\cal{G}}}
\def\rS{{\bar{S}}}
\def\rQ{{\bar{Q}}}
\def\rT{{\bar{T}}}
\def\rD{{\bar{D}}}
\def\rP{{\bar{P}}}
\def\rY{{\bar{Y}}}
\def\rM{{\bar{M}}}
\def\rz{{\bar{z}}}
\def\drD{\dot{{\bar{D}}}}
\def\drP{\dot{{\bar{P}}}}
\def\drT{\dot{{\bar{T}}}}
\def\drY{\dot{{\rY}}}
\renewcommand{\S}{{\cal S}}
\long\def\hq#1{{\color{blue}#1}}
\begin{document}

\title{Fluid Models of Parallel Service Systems under FCFS}

\newcommand*\samethanks[1][\value{footnote}]{\footnotemark[#1]}
\author{
Yuval Nov\thanks{
Department of Statistics,
The University of Haifa,
Mount Carmel 31905, Israel,
\textit{yuval@stat.haifa.ac.il.}
Research supported in part by
Israel Science Foundation Grant 286/13.}
\and
Gideon Weiss\thanks{
Department of Statistics,
The University of Haifa,
Mount Carmel 31905, Israel,
\textit{gweiss@stat.haifa.ac.il.}
Research supported in part by
Israel Science Foundation Grant 286/13.}
\and
Hanqin Zhang\thanks{
Department of Decision Sciences,
School of Business,
National University of Singapore, Singapore,
\textit{bizzhq@nus.edu.sg.}  }
}

\date{ \today }

\maketitle
{\begin{abstract}
We study  deterministic fluid approximations of parallel service systems operating under first come first served policy (FCFS). The condition for complete resource pooling is identified in terms of the system structure and the customer service times.  The static planning linear programming approach (Harrison and Lopez \cite{harrison-lopez:99}) is used to obtain a maximum throughput compatibility tree and to show that FCFS using this compatibility tree is throughput optimal. {We investigate matching rates and show by Hotelling's $T^2$-test and simulation that they are dependent on the service time distribution.}
\end{abstract}

{\it Key words}: parallel service system, deterministic fluid approximation; matching rate.}


\section{Introduction}
\label{sec.introduction}

Parallel service systems are widely used to model service and manufacturing systems.
Such systems have parallel servers $\S=\{s_1,\ldots,s_J\}$ of various skills, a stream of customers of various types $\C=\{c_1,\ldots,c_I\}$, and a bipartite compatibility graph $\G$ where $(s_j,c_i)\in \G$ if server $s_j$ can serve customers of type $c_i$; see Figure \ref{fig.callcenter}.
\begin{figure}[htbp]
   \centering
   \includegraphics[scale=0.45]{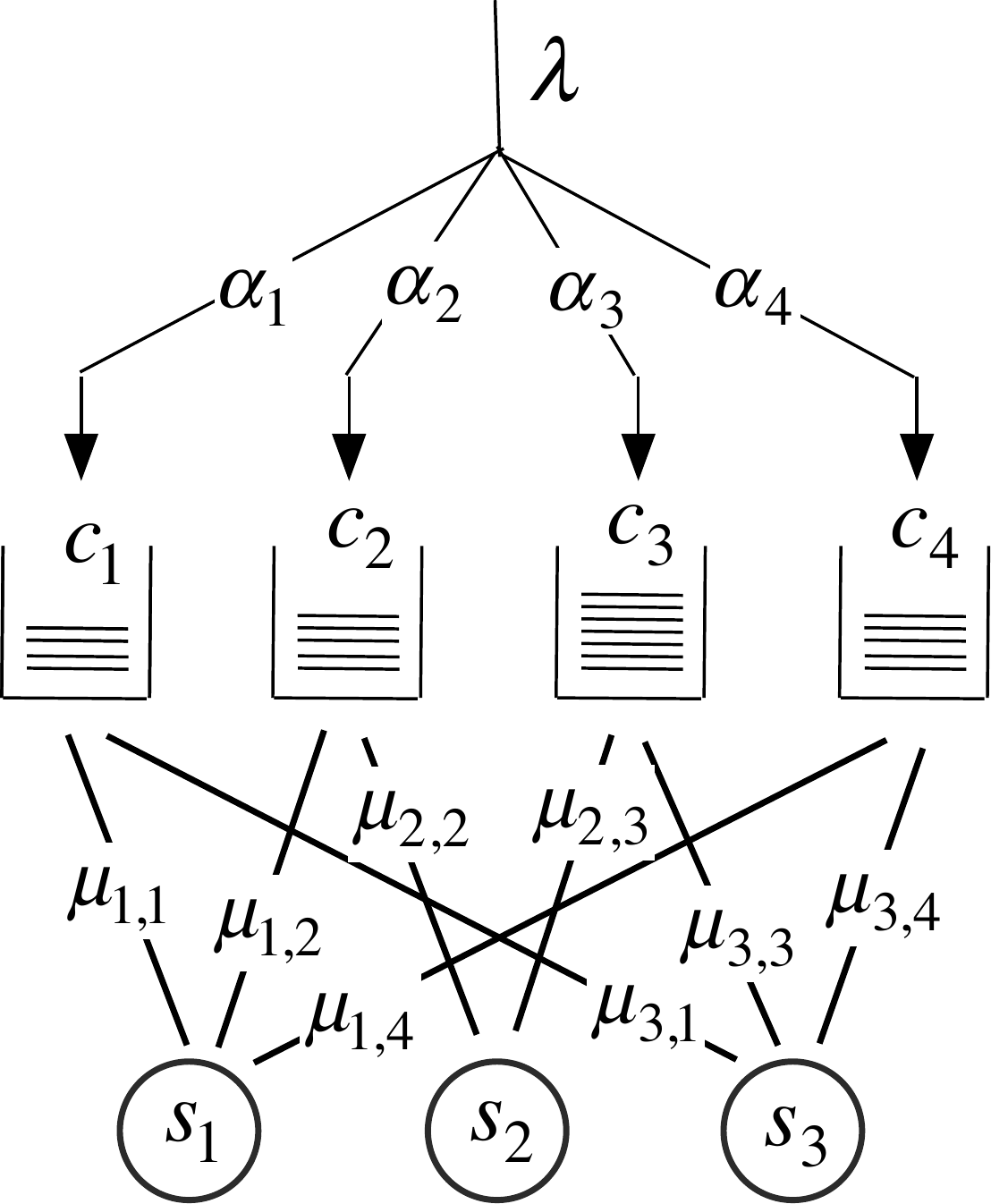}
   \caption{A parallel skilled based service system with 3 servers and 4 customer types}
  \label{fig.callcenter}
 \end{figure}
{In this paper we focus on the behavior of such systems under the policy of first come first served (FCFS), and in particular, on
deterministic fluid approximations for such systems uniformly scaled by time and space.}


It is well known that the policy of FCFS for parallel service systems is not optimal, in that it may waste resources and result in longer waiting times than under other policies.  It is nevertheless very widely used, because it is simple to implement, does not require any knowledge of system parameters, and is fair to customers.  An important property of FCFS is as follows:  Assume that arriving customers have complete information of the system at their arrival, and can choose among the compatible servers which queue to join, and each server uses FCFS for his queue.  In that case, the policy of join the shortest work load (JSW) will  be the Nash equilibrium for customers that wish to minimize their waiting times.  But this policy of JSW is automatically achieved when customers queue up in a single queue and the servers are using FCFS.   FCFS can then serve as a benchmark, and comparison  with other policies will provide an estimate of the ``price of anarchy''.  In particular, performance under FCFS may help in designing the system, e.g., deciding on an improved compatibility graph, and improved service rates.

{Moreover, using FCFS has two purposes:}  under appropriate conditions it introduces resource pooling, i.e., all servers are busy at the same time and act like a combined server,  and it gives the same service level to customers of all different types, i.e., it achieves approximately  global FCFS (as defined by Talreja and Whitt \cite{talreja-whitt:07}).

Our goal in this study is to determine conditions for complete resource pooling, i.e., conditions on the system parameters such that under FCFS all the servers can act as  a combined server providing global FCFS, no matter what the arrival rate is, and to determine the maximal service capacity of the system  in that case.  This maximal service capacity determines stability under any arrival rates, including time-varying arrival rates.  Alternatively, if complete resource pooling fails to hold, we wish to determine whether the servers decompose uniquely to subsets that have complete resource pooling.

The literature on parallel service system is quite voluminous. An incomplete list would include an early study \cite{green:85};  applications to manufacturing and supply chain management \cite{rubino-ata:09, veeger-etman-rooda:08}, applications to call centers and internet service systems \cite{gans-koole-mandelbaum:2003,harchol-balter-etal:99,squillante-etal:01,wallace-whitt:05},  attempts to find optimal policies, mainly for small graph systems \cite{armony-ward:10,armony-ward:13,bell-williams:01,ghamami-ward:13,tezcan-dai:10,williams:00}, heavy traffic and fluid approximations \cite{harrison-lopez:99,harrison-zeevi:05}, and many-server scaling \cite{adan-boon-weiss:14,gurvich-whitt:09,gurvich-whitt:10}. {In view of \cite{gurvich-whitt:10,harrison-lopez:99,harrison-zeevi:05}, establishing fluid approximations is often the first step to solve the optimal dynamic control problem for parallel service systems. {\it Thus, it would be necessary to provide a unified framework of establishing fluid approximations for such system with an arbitrary compatibility graph $($topology$)$.}}

{On the other hand,  to evaluate the utilization of each server  and customer quality of service, one needs to compute matching rates: the fraction of services by server $s_j$ to customers of type $c_i$, out of all services performed by the system.  It is straightforward to see that the matching rates immediately determine resource pooling and maximal service capacity.
Adan and Weiss \cite{adan-weiss:14} discuss  the special case when service rates depend only on {the server}, arrivals are Poisson, and service is exponential, under the policy of FCFS-ALIS (assign longest idle server) and derive a product-form stationary distribution for this system. From the stationary distribution it is possible to calculate matching rates, which in heavy traffic are equal to those obtained  for the FCFS infinite bipartite matching model of \cite{adan-busic-mairesse-weiss:15,adan-weiss:11,caldentey-kaplan-weiss:09}.  The matching rates of the FCFS infinite bipartite matching model reappear in the analysis of parallel service systems with many servers, as demonstrated empirically in \cite{adan-boon-weiss:14}. {\it Motivated by Adan and Weiss} \cite{adan-weiss:14}, {\it we want to see whether the matching rates can be completely determined  from the  first moments of the customers interarrival and service times, in general, or under specific assumptions on the topology of the system
and the interarrival and service distributions.}}

{Furthermore, as observed from some of the above literature, to understand the behavior of parallel service systems one needs to characterize the dynamics of the positions of the $J$ servers in the queue.
 The position dynamics of the $J$ servers can be used to determine  whether resource pooling holds and to calculate the maximal service capacity of the system.  Adan and Weiss \cite{adan-weiss:11,adan-weiss:14} characterize the position dynamics of the servers in the case of server dependent service rates, Poisson arrivals and exponential service times. {\it The natural  question is whether we can determine the fluid trajectories of the positions of the $J$ servers under general assumptions on customer arrivals and service times.}}

{Finally, by the work of Dai \cite{dai:95} for multiclass queueing networks,  fluid approximations not only provide an asymptotic analysis but also verify  stability in the sense of positive Harris recurrence and existence of stationary distributions.
{\it One would like to see whether  fluid approximations can also be used to verify the stability for parallel service systems}.
 Foss and Chernova \cite{foss-chernova:98}  consider parallel service systems under JSW (as well as join shortest queue, JSQ).  They derive conditions for stability  when the service rates depend only on {the servers} and not on the customer types, and also when the service rates depend only on the {customer type} and not on the server.  For the general case, when service rates depend both on the server and  customer type, they produce an example in which stability depends not only on service rates but also on the complete distributions of the service times --- this means that the fluid model is not informative enough to determine stability. {\it Thus it would be interesting to find conditions such that the system stability can be determined by the corresponding fluid approximations}.

 Mainly motivated by the above, in this paper we focus on the following questions
 \begin{itemize}
\item  Establish the deterministic fluid approximations;
\item  Explore  stability conditions for parallel service systems using the fluid model approach;
\item  Obtain  explicit fluid trajectories for the server-dependent (SD) and customer-dependent (CD) processing rates cases;
\item  Find matching rates for parallel service systems with complete bipartite graphs, tree bipartite graphs, or hybrids of those;
\item  Derive a bound on service capacity, and obtain an optimal tree graph for which FCFS achieves the bound and is throughput optimal,
by introducing and solving an LP static planning problem;
\item  Demonstrate by Hotelling's $T^2$-test and simulation of the SD case that matching rates depend on the service time distributions.
\end{itemize}

The rest of the paper is organized as follows. in Section \ref{sec.model} we describe our model, define stochastic processes that describe its dynamics, and define matching rates and resource pooling. Section \ref{sec.fluidlimits} is devoted to the fluid model. We show that fluid limits exist, and derive some fluid model equations that every fluid limit needs to satisfy. The discussion on the stability of parallel service systems and an example of Foss and Chernova \cite{foss-chernova:98} are given in Section \ref{sec.stability}.
 In Section \ref{sec.serverdependent}, we use the fluid model equations to obtain the explicit fluid trajectories for the SD (server-dependent) processing rates case.
The fluid model equations are also used to obtain the explicit fluid trajectories for the CD (customer-dependent) processing rates case in Section \ref{sec.customerdependent}.
 We obtain fluid trajectories for parallel service systems with complete bipartite graphs, tree bipartite graphs, or hybrids of those, by calculating matching rates  in Section \ref{sec.computablematchingrates}.
 We formulate an LP static planning problem (cf. \cite{harrison-lopez:99}) to find a bound on service capacity, and obtain an optimal tree graph for which FCFS achieves the bound and is throughput optimal in Section \ref{sec.optimal}.
 Finally in Section \ref{sec.simulation} we demonstrate by simulation of the SD case, that matching rates depend on the service time distributions, but are very close to the values computed analytically for exponential service times.
}

\section{The stochastic system model}
\label{sec.model}

Given the servers $\{s_1,\ldots,s_J\}$, the customer types $\{c_1,\ldots,,c_I\}$ and the compatibility graph $\G$, the
primitives of the stochastic system consist of a sequence of interarrival times, a sequence of customer types,  and one sequence of processing times for each compatibility link in the graph $\G$.  We assume all these sequences are independent.

We let $a(\ell)$ be the arrival time of the $\ell$th customer and $u(\ell)=a(\ell)-a(\ell-1)$ be the interarrival times, where $\ell=0,\pm 1,\pm 2,\ldots$, and $a(0)\le 0 <a(1)$, and we let $A(t)=\max\{\ell : a(\ell) \le t \} $.  The distribution of $u(\ell)$ is $F$ with mean $1/\lambda$, so that $A(t)$ is a renewal process with rate $\lambda$ (all the fluid model results below continue to hold if we assume only that the arrival process $A(t)$ is stationary and  $A(t)/t \to \lambda$ a.s.).  In particular,  for $s<t$, $A(t)-A(s)$ counts the total number of arrivals in $(s,t]$.  Customer types are i.i.d.,  type $c_i$ has probability $\alpha_{c_i},\,i=1,\ldots,I$, and we let $\xi(\ell)$ be a unit vector of length $I$ such that $\xi_i(\ell) = 1$ if customer $\ell$ is of type $c_i$, for $\ell=0,\pm 1,\pm 2,\ldots$.  The counts of arrivals of customers of each type are then given by
\begin{equation}
\label{eqn.ciarrivals}
A_{c_i}(t) = \left\{ \begin{array}{ll}
\displaystyle \sum_{\ell=1}^{A(t)} \xi_i(\ell), &  t \ge 0, \\
\displaystyle - \sum_{\ell=A(t)+1}^{0} \xi_i(\ell), & t < 0
\end{array}\right.
\end{equation}

We let $v_{s_j,c_i}(0)$ be the remaining service time of server $s_j$ if he is serving a customer of type $c_i$ at time 0, and $v_{s_j,c_i}(0)=0$ otherwise.  We let $v_{s_j,c_i}(k),\,k=1,2,\ldots,$ be the processing time of the $k$th customer of type $c_i$ that server $s_j$ is serving after time 0.  The distribution of $v_{s_j,c_i}(k),\,k=1,2,\ldots,$ is $G_{s_j,c_i}$ with mean $m_{s_j,c_i}$ and rate $\mu_{s_j,c_i}=1/m_{s_j,c_i}$.  We let $X_{s_j,c_i}(t)=\max\{k : \sum_{\ell=0}^k v_{s_j,c_i}(\ell) \le t\}$ count the number of job completions by server $s_j$ when processing customers of type $c_i$ for a total processing time $t$, so that $X_{s_j,c_i}(t)$ is a renewal process of rate  $\mu_{s_j,c_i}$ (all the fluid model results continue to hold if we assume only that the service completion process $X_{s_j,c_i}(t)$ is stationary and  $X_{s_j,c_i}(t)/t \to \mu_{s_j,c_i}$ a.s.).

Service policy is FCFS, i.e., when a server becomes available he will next serve the compatible customer that has been waiting for the longest time.  To complete the service policy description, when a customer arrives and there are several idle compatible servers, then the customer is assigned to the compatible server that has been idle for the longest time, i.e., assign longest idle server, ALIS.

The special case when service rates depend only on the servers, and when arrivals are Poisson and services are exponentially distributed is tractable, and is analyzed in \cite{adan-weiss:14} (see also \cite{adan-visschers-weiss:12}).  Under these assumptions the system can be described by a countable state continuous time Markov chain, and most surprisingly, this Markov chain has a product form stationary distribution.  The following Figure \ref{fig.Ivostate}  describes the state of the Markov chain:  The circles represent the customers in the system ordered from left to right by order of arrivals, $\ell$ busy servers are placed with the customers which they are currently serving, followed by $J-\ell$ idle servers ordered by their idleness times, so that $M_1,\ldots,M_J$ is a permutation of the servers $s_1,\ldots,s_J$.  The  state at time $t$ is defined as $\mathscr{X}(t)=(M_1,n_1,\ldots,M_\ell,n_\ell,M_{\ell+1},\ldots,M_J)$, where $n_j$ counts the number of customers queueing between servers $M_j$ and $M_{j+1}$.  All the customers between $M_j$ and $M_{j+1}$ have been skipped by servers $M_{j+1},\ldots,M_J$ and must therefore be of types in the set $\U(M_1,\ldots,M_j)$ of customer types which are unique customers of $M_1,\ldots,M_j$, {where $\U(M_1,\ldots,M_j)$ is the set of customer types who are not compatible with servers ${\cal S}\setminus \{M_1,\ldots,M_j\}$ (see the definition at the end of this section)}.
\begin{figure}[htbp]
   \centering
   \includegraphics[scale=0.42]{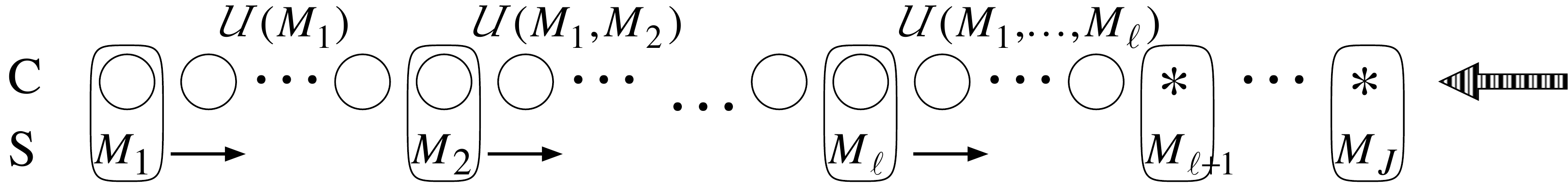}
   \caption{A state for the Markovian FCFS-ALIS parallel skill based system}
   \label{fig.Ivostate}
 \end{figure}
The dynamics are as follows:  Customers arrive from the right, scan the idle servers and join the end of the queue with the first compatible idle server that they find, or without a server if none is available.  When a server completes a service, a customer leaves the system, and the server  moves to the right, scanning the waiting customers until he finds the first compatible customer, or if no such customer is available, he joins the end of the idle servers queue at its left end.  Under the assumption that service rates depend only on the server, Poisson arrivals and exponential services, this is a discrete state continuous time Markov chain.

$\mathscr{X}(t)=(M_1,n_1,\ldots,M_i,n_i,M_{i+1},\ldots,M_J)$ in itself is not a Markov process for our general system, but if we add the remaining time to the next arrival and the remaining times until  service completion for all busy servers, it becomes a Markov process in continuous time with an uncountable state space.  In this paper we consider the dynamics of this more general system, and study its fluid limits.  To describe the dynamics we use a more detailed representation of the system as illustrated in Figure \ref{fig.systemdynamics}, where the system has 3 servers, 3 customer types, and the compatibility graph includes $\G=\{(s_1,c_1),(s_2,c_1),(s_2,c_2),(s_3,c_1),(s_3,c_3)\}$.
\begin{figure}[htbp]
  \centering
   \includegraphics[scale=0.45]{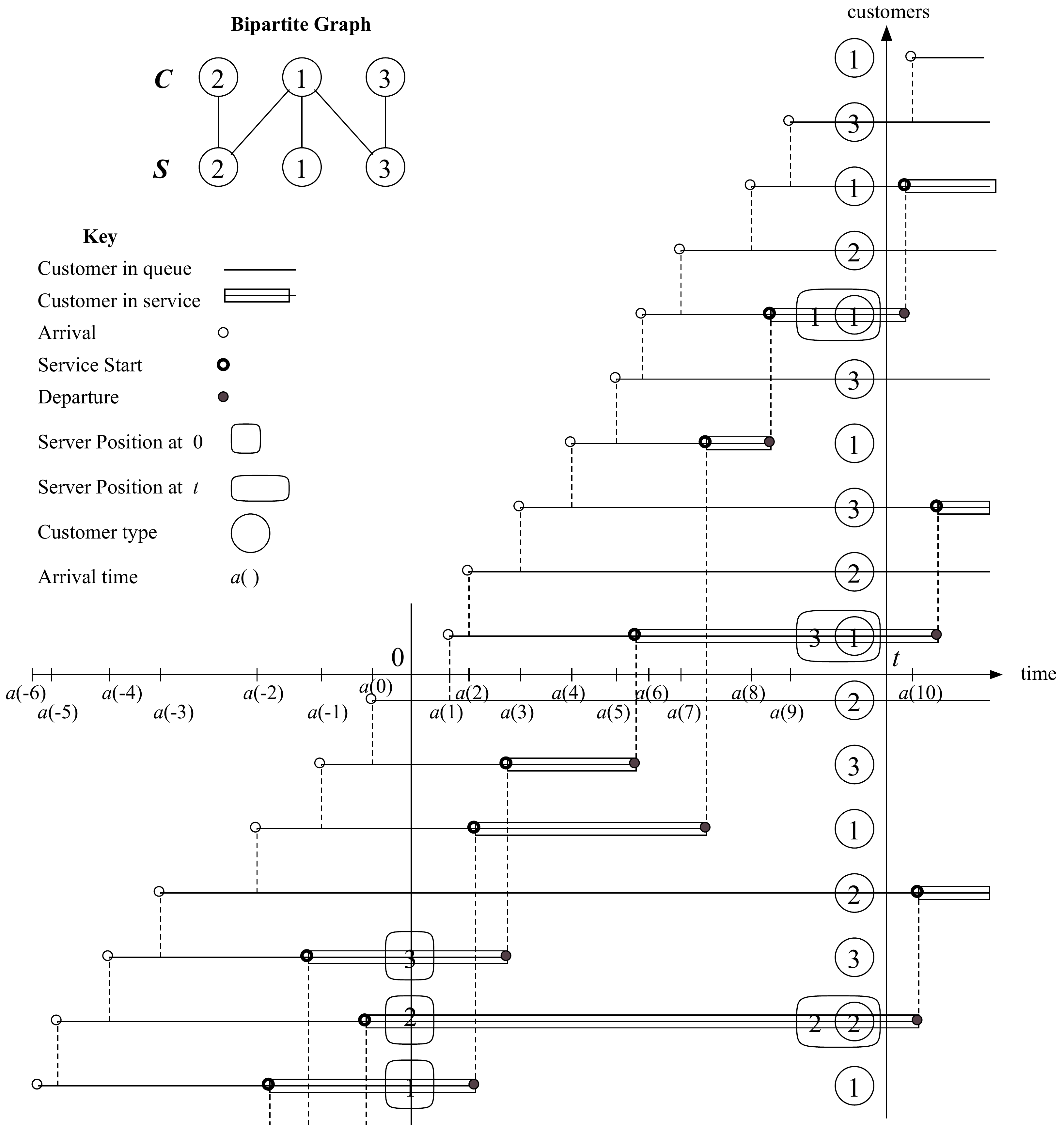}
   \caption{Dynamics of a 3 servers 3 customer type system under FCFS-ALIS}
   \label{fig.systemdynamics}
 \end{figure}
On the horizontal time axis the arrival times of  customers are marked by $a(\ell)$.  On the vertical axis the types of successive customers are listed.  The list includes all the customers, past present and future, starting from the oldest customer that was present at time 0.   For each customer there is a horizontal line starting at his arrival, and ending at his departure, which includes his waiting time and his service time.  With each of the $J$ servers there is a path that describes his whole history, composed of horizontal intervals when he is serving a customer, and vertical intervals that connect the end of service of a customer and the beginning of service of the next customer that he is serving.  A top path describes the counting process of the arrival stream $A(t)$.  When a server is idle he will move together with $A(t)$.

Our working hypothesis is that if we scale time and space uniformly by $n$ and let $n$ increase, we will get fluid limits which will evolve along piecewise linear paths, so that the fluid limits of the picture in Figure \ref{fig.systemdynamics} will look as in Figure \ref{fig.fluiddynamics}.
\begin{figure}[htbp]
   \centering
   \includegraphics[scale=0.45]{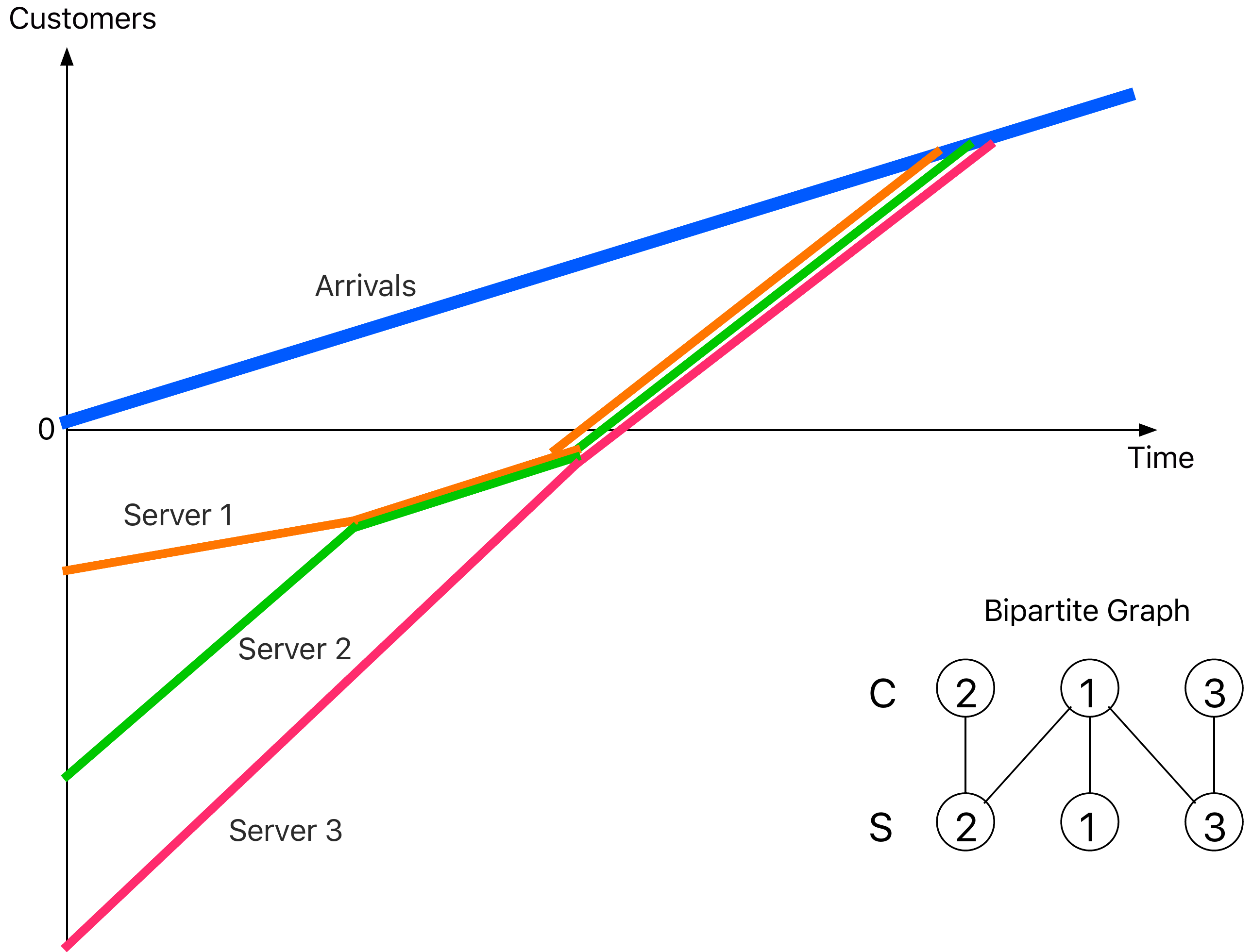}
   \caption{Conjectured Fluid Dynamics  under FCFS-ALIS}
   \label{fig.fluiddynamics}
\end{figure}
Here the horizontal and vertical steps of the paths of servers become increasing straight lines.  The top line records all cumulative fluid arrivals as a function of time, and the arriving fluid is a mixture of the three customer types.  Under the line of server $s_3$  all the arrivals have already departed.  In the area between the lines of servers $s_2,s_3$ fluid of customers of types $c_3$ are still waiting, but types $c_1,c_2$ have already departed.  In the area between the lines of server $s_1$ and $s_2$  only customer fluid of type $c_1$ are  departed, and customers of types $c_2,c_3$ are still waiting.  Finally, between the arrival line and the line of server $s_1$ fluid customers of all three types are still waiting.  In this figure all three lines eventually meet; this is the phenomena of {\em complete resource pooling}.  Furthermore, in this instance the fluid limit is stable, as all fluid is drained and the fluid system is empty from some time onwards.

Assuming that fluid limits move along such straight lines, we are interested in the following questions:
\begin{compactitem}
\item[-]
When $\lambda$ is large, do the lines of all the servers merge eventually?  If so, we say that complete resource pooling holds.
\item[-]
If the lines do not merge, does this define a unique decomposition of the servers?
\item[-]
For what values of $\lambda$ do all the lines eventually merge with $\lambda t$, the top line?  In cases when they merge, we say that the fluid model is stable.
\end{compactitem}
 Complete resource pooling implies under some minor conditions that queues between servers are stable, and stability of the fluid model implies under some minor conditions that the stochastic system is stable.

We introduce some notation:
We denote by $\C(s_j)$ the customer types compatible with $s_j$, referred to as customers of $s_j$, and by $\S(c_i)$ the servers that are compatible with customers of type $c_i$, referred to as the servers of $c_i$.
 For a subset $C \subseteq \C$ of customer types we let $\S(C)= \bigcup_{c_i\in C} \S(c_i)$ denote all the servers of customer types in $C$.  Also, for a subset $S\subseteq \S$ we let $\C(S) = \bigcup_{s_j\in S} \C(s_j)$  denote all the customer types that can be served by some servers in $S$, and we let $\U(S) = \overline{\C(\overline{S})}$ denote the set of customer types which cannot be served by any server outside $S$, that is, the unique customers of $S$.  For a subset $C \subseteq \C$ of customer types we let $\alpha_C=\sum_{c_i\in C} \alpha_{c_i}$.

To describe the dynamics of the system we define the following quantities:
\begin{description}
\item
$P_{s_j}(t)$ is the position of server $s_j$ at time $t$, where we let $P_{s_j}(t)=\ell$ if the server is serving at time $t$ the $\ell$th customer in the sequence of arrivals.  If servers $s_{j_1},\ldots,s_{j_k}$ are idle at time $t$  then their positions are defined as $A(t)+1,\ldots,A(t)+k$, ordered by duration of idleness, with $A(t)+k$ the longest idle.
\item
$Y_j(t)$ is the current $j$th level, where we let $Y_1(t)<\ldots<Y_J(t)$ be the ordered set of the positions of the servers at time $t$.
\item
$M_1(t),\ldots,M_J(t)$ is the random permutation of the servers at time $t$, where we let $P_{M_j(t)}(t)=Y_j(t)$
\item
$T_{s_j,c_i}(t)$ is the cumulative time over $(0,t)$ that server $s_j$ has served customers of type $c_i$.
\end{description}
In this paper we will mainly investigate the processes $Y_j(t),M_j(t),\,j=1,\ldots,J$.  These processes also define the actual queue lengths.  We let $Q_{c_i,j}(t)$ denote the number of customers of type $c_i$ which are waiting between servers $M_j(t)$ and $M_{j+1}(t)$ at time $t$.  These are given by:
\begin{equation}
 Q_{c_i,j}(t) =  \left\{ \begin{array}{ll}  \displaystyle
\sum_{\ell =Y_j(t)+1}^{Y_{j+1}(t)-1} \xi_i (\ell) \, {\mathsf I} \{c_i \in \U(M_1(t),\ldots,M_j(t))\}, & j=1,\ldots,J-1,  \\
\displaystyle  \sum_{\ell =Y_J(t)+1}^{A(t)} \xi_i (\ell),  &  j=J.
\end{array} \right.   \label{eqn.queuedef1}
\end{equation}
where $\mathsf{I}\{\cdot\}$ is the indicator function.

Let $U(t)$ be the remaining time at time $t$ until next arrival, $V_{s_j,c_i}(t)$ be the remaining processing time of $c_i$ by $s_j$ if $s_j$ is processing a type $c_i$ customer at time $t$, and $V_{s_j,c_i}(t)=0$ otherwise.  The initial state of the system is given by $A(0)=0$, $P_{s_j}(0)$, $U(0)=a(1)$, and $V_{s_j,c_i}(0)=v_{s_j,c_i}(0)$ (note that $P_{s_j}(0) < 0$).   Both $\mathscr{Y}(t)=\big(A(t),P_{s_j}(t),U(t),V_{s_j,c_i}(t)\big)$, and
 $\mathscr{Z}(t)=\big(M_j(t),Q_{c_i,j}(t),U(t),V_{s_j,c_i}(t)\big)$ are Markov processes.   The former is always transient, as $A(t),P_{s_j}(t)$ are non-decreasing with $t$.  The latter may be stable, and we say that the queueing system is stable (ergodic) if   $\mathscr{Z}(t)$ is positive Harris recurrent (ergodic).


\section{Fluid limits and fluid model equations}
\label{sec.fluidlimits}

To study the fluid limits of the system we consider a sequence of systems defined over the same probability space, indexed by $n=1,2,\ldots$, and study their fluid scaling.  All the systems in the sequence share the same stochastic sequences $A(t),\xi(\ell),X_{s_j,c_i}(t)$, but they differ in their initial conditions:  We let $P_{s_j}^n(0),\,j=1,\ldots,J$ be the initial positions of the servers in the $n$th system.
We denote quantities of the $n$th system which are not common to all systems by the superscript $n$.  We obtain the fluid scaling for the sequence of systems by scaling time and space of the $n$th system by $n$. For any function $z^n(t)$ we define the fluid scaling as $\rz^n(t)=\frac{1}{n} z^n(nt)$.

Consider sample paths $\omega\in \Omega$ of the sequence of systems, and consider one of the processes, say $z^n(t,\omega)$.  If $\rz^r(t,\omega)=\frac{1}{r} z^r(rt,\omega) \to \rz(t)$ uniformly on compacts (u.o.c.) when $r\to \infty$, for some $\omega$ and for some subsequence $r$ of $n=1,2,\ldots$, where $\rz(t)$ is a deterministic function of $t$, then we say that $\rz(t)$ is a fluid limit of
$z^n(\cdot,\cdot)$.

To obtain the fluid dynamics of our system we need to assume that the following holds:
\begin{eqnarray}
&& \lim_{n\to\infty} \rP^n_{s_j}(0) = \lim_{n\to\infty} \frac{P^n_{s_j}(0)}{n} = \rP_{s_j}(0) \le 0.
  \label{eqn.fluidinitiaipositionlimit} \\
&&  {\lim_{n\to\infty} \frac{U^n(0)}{n} =  0,  \qquad \lim_{n\to\infty} \frac{V^n_{s_j,c_i}(0)}{n} =  0.}
 \label{eqn.initialremainderlimits}
\end{eqnarray}
and
\begin{eqnarray}
\label{eqn.fsllnfluidlimit}
&&\lim_{n\rightarrow \infty}  \frac{1}{n} A(nt,\omega)=\lambda t \ \ \ \ \mbox{u.o.c., a.s. } \nonumber \\
&&\lim_{n\rightarrow \infty}  \frac{1}{n} A_{c_i}(nt,\omega)=\lambda \alpha_{c_i} t \ \ \ \ \mbox{u.o.c., a.s. } \\
&&\lim_{n\rightarrow \infty}  \frac{1}{n} X_{s_j,c_i}(nt,\omega)=\mu_{s_j,c_i}t \ \ \ \ \mbox{u.o.c., a.s. } \nonumber \end{eqnarray}
We assume throughout that (\ref{eqn.fluidinitiaipositionlimit}) and (\ref{eqn.initialremainderlimits}) hold.
Assumptions (\ref{eqn.fsllnfluidlimit}) hold for our system  by the functional strong law of large numbers, since we assume renewal arrivals, i.i.d. customer types, and renewal service times.
We exclude the set of measure zero where (\ref{eqn.fsllnfluidlimit}) fails to hold. {Let $T^n_{s_j,c_i}(t)$ be the cumulative service time of customer type $c_i$ provided by
server $s_j$ over time interval $[0,t]$. The following theorem proves the existence of fluid limits.}

\begin{theorem}
\label{thm.existence}
Fluid limits  for $\rT^n_{s_j,c_i}(t,\omega), \rP^n_{s_j}(t,\omega), \rY^n_j(t,\omega),\rQ^n_{c_i,j}(t)$  exist almost surely for every $\omega$, and they are almost surely Lipschitz continuous for every $t>0$.
\end{theorem}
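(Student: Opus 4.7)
My plan is a standard Arzel\`a--Ascoli compactness argument applied $\omega$-by-$\omega$ on the full-measure event where the FSLLN limits (\ref{eqn.fsllnfluidlimit}) hold. The steps are: establish uniform bounds and uniform Lipschitz estimates for the fluid-scaled processes on compact subintervals of $(0,\infty)$, then extract convergent subsequences by Arzel\`a--Ascoli (with a diagonal argument over compacts exhausting $(0,\infty)$), whose limits inherit the Lipschitz constants.

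For uniform boundedness on $[0,T]$: since $T^n_{s_j,c_i}(\cdot)$ grows at rate at most one, $\rT^n_{s_j,c_i}(t)\le t$; the convention that the $k$ idle servers occupy the positions $A(t)+1,\ldots,A(t)+k$ yields $P^n_{s_j}(nt)\le A(nt)+J$, hence $\rP^n_{s_j}(t)\le \rA^n(t)+J/n$ and the ordered $\rY^n_j$ obey the same upper bound as order statistics of $\{\rP^n_{s_j}\}$; finally $\rQ^n_{c_i,j}(t)$ is dominated by $\rA^n_{c_i}(t)$ plus the scaled initial queue of type $c_i$. All of these bounds are uniform in $n$ on compacts by (\ref{eqn.fsllnfluidlimit}) and (\ref{eqn.fluidinitiaipositionlimit}).

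For the Lipschitz estimates on $[t_0,T]$ with $0<t_0<T$: $\rT^n_{s_j,c_i}$ is trivially $1$-Lipschitz uniformly in $n$. For $\rP^n_{s_j}$ I would decompose the position advance over $[ns,nt]$ into (i) advances while $s_j$ is idle, which total at most $A(nt)-A(ns)$, and (ii) jumps at the service completion instants of $s_j$, each equal to one plus the number of incompatible customers skipped. The number of completions is at most $\sum_{c_i}[X_{s_j,c_i}(nt)-X_{s_j,c_i}(ns)]=O(n(t-s))$ by (\ref{eqn.fsllnfluidlimit}). After the $O(1/n)$ initial transient absorbs any $\Theta(n)$-sized initial backlog of customers incompatible with $s_j$, the customers available to be skipped at any subsequent completion have arrived only since the previous completion (or idle-to-busy transition) of $s_j$, so summed over completions the total is again $O(n(t-s))$ by the FSLLN. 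Hence $\rP^n_{s_j}$ is Lipschitz on $[t_0,T]$ with a constant depending on $\lambda$ and the $\mu_{s_j,c_i}$ but not on $n$. The ordered positions $\rY^n_j$ are Lipschitz as order statistics of the $\rP^n_{s_j}$, and $\rQ^n_{c_i,j}$ is Lipschitz because the representation (\ref{eqn.queuedef1}) expresses it as a sum of differences of Lipschitz counting processes.

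The main obstacle is precisely the skip-count bound for $\rP^n_{s_j}$: a single service completion can in principle trigger an $O(n)$ jump when the associated queue of incompatible customers is $\Theta(n)$, so a pre-limit global Lipschitz estimate genuinely fails --- this is exactly why the statement restricts Lipschitz continuity to $t>0$. The substantive technical point is therefore the renewal-type argument showing that once the initial backlog has been cleared, incompatible arrivals accumulating between successive completions of $s_j$ contribute only $O(n(t-s))$ in scaled time uniformly in $n$; once this is in hand, passing the Lipschitz inequality to the u.o.c.\ limit along the convergent subsequence gives the stated Lipschitz continuity.
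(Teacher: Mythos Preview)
Your skip-count argument for $\rP^n_{s_j}$ has a real gap. The claim that ``the customers available to be skipped at any subsequent completion have arrived only since the previous completion (or idle-to-busy transition) of $s_j$'' is false. In an overloaded regime the queue in front of $s_j$ can be $\Theta(n)$ for all time; the customers occupying positions $P^n_{s_j}(\tau_{k-1})+1,\ldots,P^n_{s_j}(\tau_k)-1$ may all have arrived long before $\tau_{k-1}$. Your decomposition into idle advances plus completions plus skips is correct, but bounding the total number of skips is equivalent to bounding the position advance itself, so the argument as written is circular. The honest version of your idea would split the skipped customers into those incompatible with $s_j$ (a fraction $\approx 1-\alpha_{\C(s_j)}$ of the full range, which still leaves the position advance on both sides of the inequality) and those compatible with $s_j$ but already taken by other servers (bounded by services of other servers over $[ns,nt]$); solving the resulting inequality does give a Lipschitz bound, but this is not what you wrote.

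The paper takes a different and cleaner route. It does not attempt a uniform Lipschitz bound on the pre-limit $\rP^n_{s_j}$; instead it uses only monotonicity and the crude bound $\rP^n_{s_j}(t)\le\lambda t+\epsilon$ to extract a subsequential limit at rationals, and then proves the \emph{limit} is Lipschitz. For that, fix $c_i\in\C(s_j)$ to be the type $s_j$ is serving at the reference time; by FCFS, no $c_i$-customer at a position above $P^n_{s_j}$ has yet begun service. Hence every $c_i$-customer in the position range $[P^n_{s_j}(nt_0),P^n_{s_j}(nt_1)]$ must be served by some server in $\S(c_i)$ during $[nt_0,nt_1]$ (up to $O(1)$), so their count is bounded by $\sum_{s_k\in\S(c_i)}\big(X_{s_k,c_i}(T^n_{s_k,c_i}(nt_1))-X_{s_k,c_i}(T^n_{s_k,c_i}(nt_0))\big)$. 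Since the proportion of type-$c_i$ customers in any long position range is $\approx\alpha_{c_i}$, the full range is at most $\alpha_{c_i}^{-1}$ times this service count, and the FSLLN gives the Lipschitz constant $\max_{c_i}\alpha_{c_i}^{-1}\sum_{s_k\in\S(c_i)}\mu_{s_k,c_i}$. The key idea you are missing is to count \emph{compatible} customers of one fixed type in the position range (all of which must be served), rather than skipped customers (which have no direct service bound).

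A smaller issue: your claim that $\rQ^n_{c_i,j}$ is Lipschitz ``as a sum of differences of Lipschitz counting processes'' overlooks the indicator $\mathsf{I}\{c_i\in\U(M_1(t),\ldots,M_j(t))\}$ in (\ref{eqn.queuedef1}), which jumps whenever the server permutation changes. The paper handles $\rQ$ only after the limits $\rY_j$ are established, reading off the limit directly from (\ref{eqn.fluidqueues}).
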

\begin{proof}
Consider first $T^n_{s_j,c_i}(t,\omega)$.  We have for all $\omega$ that $T^n_{s_j,c_i}(t,\omega)-T^n_{s_j,c_i}(s,\omega) \le t-s$ for all $s<t$, and so for all $\omega$ and every $n$ also $\rT^n_{s_j,c_i}(t,\omega)-\rT^n_{s_j,c_i}(s,\omega) \le t-s$, so the sequence is equicontinuous and uniformly bounded on every compact interval, for every $\omega$.  Fix $\omega$.  By Arzela-Ascoli theorem, for every compact interval there exists a subsequence $r$ of $n$ such that $\rT^r_{s_j,c_i}(t,\omega)$ converges to some $\rT_{s_j,c_i}(t)$ as $r\to \infty$ uniformly on the  interval.  It is then possible to choose a further subsequence that will converge uniformly on all compacts.  Furthermore, all $\rT^n_{s_j,c_i}(t,\omega)$ are Lipschitz continuous for every $\omega$, and hence so is every fluid limit $\rT_{s_j,c_i}(t)$.

The main part of the proof is to show the existence  of fluid limits for $P^n_{s_j}(t,\omega)$.
The functions $P^n_{s_j}(t,\omega)$ are non-decreasing in $t$, and $P^n_{s_j}(0,\omega) \le P^n_{s_j}(t,\omega) \le A(t,\omega) + J$, and hence for any $\epsilon>0$ and large enough $n$,
$\rP_{s_j}(0)-\epsilon \le \rP^n_{s_j}(t,\omega) \le \lambda t + \epsilon$, so $\rP^n_{s_j}(t,\omega)$ are non-decreasing and uniformly bounded at each $t$ for all $n$.  Hence we can find a subsequence $r$ such that $\rP^r_{s_j}(t,\omega) \to \rP_{s_j}(t)$ as $r\to\infty$ for all rational $t$, and we have that $\rP_{s_j}(t)$ is non-decreasing on all rationals, and we can then extend its definition to all real $t$.  If we can show that $\rP_{s_j}(t)$ is continuous, then by Lemma 4.1 of Dai \cite{dai:95}  we will have that $\rP^r_{s_j}(t,\omega) \to \rP_{s_j}(t)$ uniformly on compacts.  We will show that $\rP_{s_j}(t)$ is in fact Lipschitz continuous for $t>0$.

We note that $\rP_{s_j}(t)$ may be discontinuous at $0$.  Consider the limiting $\rP(0),\rY(0),\rM(0)$, and assume the following: (i) $\rY_k(0) = \rP_{s_j}(0)$ (ii) $\C(s_j) \subseteq \C\big(\rM_{k+1}(0),\ldots,\rM_J(0)\big)$  (iii)  $\rY_k(0) < \rY_{k+1}(0)$.
Denote $v_{s_j}(0,\omega) = \max_{c_i\in \C(s_j)}  v_{s_j,c_i}(0,\omega) > 0$.
Then we have that $P^n_{s_j}\big(v^n_{s_j}(0,\omega)\big) > Y^n_{k+1}(0)$, and so we have:
\[
\lim_{n\to \infty} \rP^n_{s_j}(0,\omega) = \rY_k(0)  \quad \mbox{while} \quad
\rP_{s_j}(0+) \ge \liminf_{n\to\infty} \frac{1}{n} P^n_{s_j}\big(\frac{1}{n} v^n_{s_j}(0,\omega)\big) \ge \rY_{k+1}(0)>\rY_k(0).
\]

Consider now $v^n_{s_j}(0,\omega)<t_0<t_1$.
Let $c_i$ be the type of the customer that $s_j$ is serving at time $t_0$,
let $w_{s_j}(t_0,\omega)$ be elapsed  time of this customer, and let $t = t_0- w_{s_j}(t_0,\omega)$ be the time at which the service of this customer started.  At time $t$, by FCFS, all customers of type $c_i$ in positions $> P^n_{s_j} (t,\omega)$ have not yet started service.  During the time period $(t,t_1)$, server $s_j$ is processing customers of type $c_i$ as well as customers of types $c_l \in \C(s_j),\, c_l\ne c_i$.  Hence it may only process at most   $X_{s_j,c_i} (T^n_{s_j,c_i}(t_1)) - X_{s_j,c_i} (T^n_{s_j,c_i}(t))$ customers of type $c_i$.   During the time period $(t,t_1)$, other servers $s_k \in \S(c_i),\,s_k\ne s_j$ may also be processing customers of type $c_i$.  Therefore the total number of customers of types $c_i$ that may be processed in the time period $(t,t_1)$ cannot exceed
$ \sum_{k\in \S(c_i)} (X_{s_k,c_i} (T^n_{s_k,c_i}(t_1)) - X_{s_k,c_i} (T^n_{s_k,c_i}(t)))$.

We repeat the argument of the last paragraph for the scaled processes. {Consider  any $0<t_0<t_1$ and $n$ large enough that $v^n_{s_j}(0)< n t_0$ almost surely by (\ref{eqn.initialremainderlimits})}.  Assume server $s_j$ is working on job type $c_i$ at time $n t_0$, with elapsed time  $w^n_{s_j}(n t_0,\omega)$, and let $n t=n t_0 - w^n_{s_j}(n t_0,\omega)$ be the time that processing of this job started. Then:
\begin{eqnarray*}
&& \frac{1}{n} \Big(P^n_{s_j} (n t_1,\omega)-P^n_{s_j} (n t_0,\omega)\Big)\\
&& \ \ \  =
\frac{1}{n} \Big(P^n_{s_j} (n t_1,\omega)-P^n_{s_j} (n t,\omega)\Big)   \\
&& \quad = \frac{ \sum_{k=1}^I  \sum_{\ell = P^n_{s_j} (n t,\omega)}^{P^n_{s_j} (n t_1,\omega)}  \xi_k(\ell)  }
{ \sum_{\ell = P^n_{s_j} (n t,\omega)}^{P^n_{s_j} (n t_1,\omega)}  \xi_i(\ell)  }\;
\frac{1}{n} \sum_{\ell = P^n_{s_j} (n t,\omega)}^{P^n_{s_j} (n t_1,\omega)}  \xi_i(\ell) \\
&&  \quad \le \frac{ \sum_{k=1}^I  \sum_{\ell = P^n_{s_j} (n t,\omega)}^{P^n_{s_j} (n t_1,\omega)}  \xi_k(\ell)  }
{ \sum_{\ell = P^n_{s_j} (n t,\omega)}^{P^n_{s_j} (n t_1,\omega)}  \xi_i(\ell)  }\;
\frac{1}{n} \sum_{s_k\in \S(c_i)}
\Big( X_{s_k,c_i} (T^n_{s_k,c_i}(n t_1)) - X_{s_k,c_i} (T^n_{s_k,c_i}(n t)) \Big)
\end{eqnarray*}
Going to the limit, we take a subsequence $r$ for which convergence of $\frac{1}{r} P^r_{s_j} (r t,\omega)$ holds.  In the case that {$\lim_{r\to\infty} (P^r_{s_j} (r t_1,\omega) - P^r_{s_j} (r t_0,\omega))<\infty$},
we have $ \rP_{s_j} ( t_1)-\rP_{s_j} (t_0) = 0$.
Otherwise we now consider the above inequality for all $c_i$. We have that as $r\to \infty$:
{
\begin{eqnarray*}
 \rP_{s_j} ( t_1)-\rP_{s_j} (t_0)
&\le&   \max_{c_i\in \C} \left[  \lim_{r\to\infty} \left\{ \frac{ \sum_{k=1}^I  \sum_{\ell = P^r_{s_j} (r t,\omega)}^{P^r_{s_j} (r t_1,\omega)}  \xi_k(\ell)  }
{ \sum_{\ell = P^r_{s_j} (r t,\omega)}^{P^r_{s_j} (r t_1,\omega)}  \xi_i(\ell)  } \right\} \right.\\
&& \left. \qquad \qquad \times \lim_{r\to\infty} \left\{ \frac{1}{r} \sum_{s_k\in \S(c_i)}
\Big( X_{s_k,c_i} (T^r_{s_k,c_i}(r t_1)) - X_{s_k,c_i} (T^r_{s_k,c_i}(r t)) \Big) \right\} \right] \\
&=&  \max_{c_i\in \C}  \Big[  \frac{1}{\alpha_{c_i}}  \sum_{s_k\in \S(c_i)}
\mu_{s_k,c_i}  \Big( \rT_{s_k,c_i}(t_1) - \rT_{s_k,c_i}(t)   \Big)  \Big]   \\
&\le&  \max_{c_i\in \C}  \Big[  \frac{1}{\alpha_{c_i}}  \sum_{s_k\in \S(c_i)}
\mu_{s_k,c_i} \Big] \Big( t_1 - t_0 + \lim_{r\to \infty}  \frac{ w^r_{s_j}(r t_0,\omega) }{ r } \Big)\\
&=&  \max_{c_i\in \C}  \Big[  \frac{1}{\alpha_{c_i}}  \sum_{s_k\in \S(c_i)}
\mu_{s_k,c_i}    \Big]  ( t_1 - t_0).
\end{eqnarray*}}
The last equality holds because $\max_{1\le \ell \le n} V^n_{s_j,c_i}(\ell) \big/ n \to 0$ as $n\to\infty$ for all $s_j,c_i$ a.s. for all $\omega$.
We have therefore that the fluid limits $\rP_{s_j} ( t)$ are Lipschitz continuous with constant
$\max_{c_i\in \C}  \left[  \frac{1}{\alpha_{c_i}}  \sum_{k\in \S(c_i)}
\mu_{s_k,c_i} \right]$.

We can now use subsequences of subsequences to obtain that $\rP^r_{s_j}(t,\omega) \to \rP_{s_j}(t)$ u.o.c. for all $s_j$ as $r\to\infty$.
For this subsequence we then have that $\rY_j^r(t,\omega)\to \rY_j(t)$ which are the ordered values of $\rP_{s_j}(t)$.

Finally, from (\ref{eqn.queuedef1}) we obtain that for almost all $\omega$ there is some subsequence $r$ such that as $r\to\infty$:
\begin{equation}
\label{eqn.fluidqueues}
\rQ^r_{c_i,j}(t,\omega) \to
\left\{
\begin{array}{ll}
 \alpha_{c_i} (\rY_{j+1}(t)-\rY_j(t)),  & c_i\in \U(M_1(t),\ldots,M_j(t)),\;  j=1,\ldots,J-1,\\
 \\
 \alpha_{c_i} (\lambda t - \rY_J(t)), &  c_i \in \C, \quad  j=J.
\end{array}
\right.
\end{equation}
\end{proof}

We now know that almost surely for all $\omega$ there exist subsequences which lead to fluid limits that are Lipschitz continuous for all $t>0$.  We also assume that for these subsequences (\ref{eqn.fluidinitiaipositionlimit})-(\ref{eqn.fsllnfluidlimit}) hold by excluding a set of measure zero.  Since the fluid limits are Lipschitz continuous they are absolutely continuous and hence possess derivatives almost everywhere, and are integrals of their derivatives.  We shall call times $t$ at which derivatives of  fluid limits exist regular times.  We will use $\dot{z}(t)$ to denote $\frac{d}{dt}z(t)$ for all fluid limits, for all regular $t$.  We now wish to derive equations which all fluid limits must satisfy almost surely.

By definition, for every $n$, at every time $t$, $\rP^n_{s_j}(t,\omega)$ for $s_1,\ldots,s_J$ are all different, so that
\[
\rY^n_1(t,\omega)=\frac{1}{n}P^n_{M^n_1(nt,\omega)}(nt,\omega) <
\, \cdots \,< \rY^n_J(t,\omega)=\frac{1}{n}P^n_{M^n_J(nt,\omega)}(nt,\omega)
\]
However, for the fluid limits we only have that $\rY_1(t)\le \rY_2(t) \le \cdots \le \rY_J(t)$.  As a result the fluid limits  no longer define a unique permutation of the servers at time $t$, and instead we have an ordered partition of $s_1,\ldots,s_J$.  For concreteness we {order $\rP_{\rM_1(t)}(t),\ldots,\rP_{\rM_J(t)}(t)$ so that $\rP_{\rM_j(t)}(t)<\rP_{\rM_{j+1}(t)}(t)$ or $\rP_{\rM_j(t)}(t) = \rP_{\rM_{j+1}(t)}(t)$} and $\rM_j(t)<\rM_{j+1}(t)$.
We  define the fluid ordered partition $\bar \S(t) =  \big(\bar S_1(t),\ldots,\bar S_L(t)\big)$ as follows:
\begin{eqnarray}
\label{eqn.partition1}
&\big(\bar S_1(t),\ldots,\bar S_L(t)\big) \mbox{ is a partition of $\S$},& \nonumber \\
&M_j,M_{j'} \in \bar S_\ell(t) \Rightarrow \bar P_{M_j}(t)= \bar P_{M_{j'}}(t),&\\
&M_j \in \bar S_\ell(t) \ \mbox{and} \ M_{j'} \in \bar S_{\ell+1}(t) \Rightarrow  \bar P_{M_j}(t) < \bar P_{M_{j'}}(t).&  \nonumber
\end{eqnarray}
Note that $\bar{\S}(t),\rM(t)$ are limits at time $nt$ when $n\to\infty$, but they are not scaled in space, since they are discrete {and finite}.  We introduce the notation $\rY_S(t),\drY_S(t)$ to denote the common value of $\rP_{M_j}(t),\drP_{M_j}(t),\,M_j\in S$.

We now have the following theorem on the dynamics of the fluid model.  We use the convention that $\mu_{s_j,c_i}=\drT_{s_j,c_i}=0$ for $(s_j,c_i)\not\in \G$.
 \begin{theorem}
\label{thm.generaljointserversdynamics}
Consider a fluid limit in which servers at levels $k,\ldots,l$ move together for a while, i.e., $\rY_{k-1}(\tau)<\rY_k(\tau)=\cdots=\rY_l(\tau)<\rY_{l+1}(\tau)$
(or if $l=J$, $\rY_l(\tau)<\lambda  \tau$),  for some $k \le l$  and for all $s < \tau < t$.  {Let $\bar{S}(\tau)=(S'(\tau),\{M_k,\ldots,M_l\},S^{\prime\prime}(\tau))$ for the same range of $\tau$, where $S'(\tau)$ and $S^{\prime\prime}(\tau)$ are the subsets of servers preceding and succeeding $M_k,\ldots,M_l$ (the sets $S'(\tau)$ and $S^{\prime\prime}(\tau)$} may themselves consist of a further partition,  but this is irrelevant here).    Then a.s. all fluid limits at $s<\tau<t$ must satisfy the following equations:
 \begin{equation}
 \label{eqn.busyrate}
  \sum_{c_i\in\C(M_j)\backslash \C(M_{l+1},\ldots,M_J)}   \dot\rT_{M_j,c_i}(\tau) = 1 \quad j=k,\ldots,l, \\
\end{equation}
 \begin{equation}
\label{eqn.gengetherfluidB}
 \drY_k(\tau)=\cdots =\drY_l(\tau)   = \frac{1}{\alpha_{c_i}}  \sum_{j=k}^l \mu_{M_j,c_i}  \dot\rT_{M_j,c_i}(\tau),
 \qquad c_i\in \C(M_k,\ldots,M_l)\backslash \C(M_{l+1},\ldots,M_J).
\end{equation}
\end{theorem}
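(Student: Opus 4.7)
The approach is to establish both (\ref{eqn.busyrate}) and (\ref{eqn.gengetherfluidB}) by sample-path conservation arguments in the prelimit, and then pass to the fluid scale using the renewal SLLN (\ref{eqn.fsllnfluidlimit}) and the u.o.c.\ convergence provided by Theorem~\ref{thm.existence}. Throughout the plan we work with a sample path on the full-measure event where (\ref{eqn.fluidinitiaipositionlimit})--(\ref{eqn.fsllnfluidlimit}) and the conclusions of Theorem~\ref{thm.existence} all hold.

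For (\ref{eqn.busyrate}) I would argue in two parts. First, in the prelimit each server $M_j$ with $j\in\{k,\ldots,l\}$ is effectively never idle on the fluid scale during $(ns,nt)$: an idle server has position $A(\tau)+\text{rank}$, which on the fluid scale is at least $\lambda\tau$, whereas by hypothesis $\rP_{M_j}(\tau)=\rY_k(\tau)<\rY_{l+1}(\tau)\le\lambda\tau$ (or $\rY_l(\tau)<\lambda\tau$ when $l=J$). Hence the cumulative idle time of $M_j$ on $[ns,nt]$ is $o(n)$, so $\sum_{c_i\in\C(M_j)}\dot\rT_{M_j,c_i}(\tau)=1$ at every regular $\tau\in(s,t)$. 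Second, the customers that $M_j$ serves during $(ns,nt)$ occupy positions in $(nY_k(ns)-o(n),\,nY_k(nt)+o(n)]$, which lies strictly below $nY_{l+1}(\tau)$. By FCFS-ALIS, any waiting customer at such a position whose type is compatible with some server in $\{M_{l+1},\ldots,M_J\}$ would already have been picked up by that server earlier on its way to its current position, so its type must lie in $\U(M_1,\ldots,M_l)=\C\setminus\C(M_{l+1},\ldots,M_J)$. Combining the two parts yields $\dot\rT_{M_j,c_i}(\tau)=0$ for $c_i\in\C(M_j)\cap\C(M_{l+1},\ldots,M_J)$, and the full-busyness statement then gives (\ref{eqn.busyrate}).

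The equalities $\drY_k(\tau)=\cdots=\drY_l(\tau)$ in (\ref{eqn.gengetherfluidB}) are immediate from $\rY_k(\tau)=\cdots=\rY_l(\tau)$ on $(s,t)$ and differentiability at a regular $\tau$. For the remaining equality, fix $c_i\in\C(M_k,\ldots,M_l)\setminus\C(M_{l+1},\ldots,M_J)$ and $s<s'<t'<t$. In the prelimit, positions in $(nY_l(ns'),\,nY_l(nt')]$ contain, by the SLLN, approximately $\alpha_{c_i}\,n(\rY_l(t')-\rY_l(s'))$ arrivals of type $c_i$. Each such arrival must be served during $[ns',nt']$ by a server in the block $\{M_k,\ldots,M_l\}$: incompatibility with $\{M_{l+1},\ldots,M_J\}$ rules out earlier service by that subset; the strict hypothesis $\rY_{k-1}(\tau)<\rY_k(\tau)$ keeps the servers in $\{M_1,\ldots,M_{k-1}\}$ at positions strictly below $nY_k(nt')$ throughout $[ns',nt']$ for $n$ large, so they cannot reach the window in time; and the block itself sweeps past position $nY_l(nt')$ by time $nt'$, serving all compatible waiting customers in its way. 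Equating total type-$c_i$ arrivals in the window with total type-$c_i$ services by the block yields, in fluid scaling,
\[
\alpha_{c_i}\bigl(\rY_k(t')-\rY_k(s')\bigr)=\sum_{j=k}^l \mu_{M_j,c_i}\bigl(\rT_{M_j,c_i}(t')-\rT_{M_j,c_i}(s')\bigr).
\]
Dividing by $t'-s'$ and sending $t'\downarrow s'=\tau$ at any regular $\tau$ delivers the final equality in (\ref{eqn.gengetherfluidB}).

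The main obstacle lies in a rigorous treatment of the FCFS skipping argument together with the $o(n)$ discrepancies near the endpoints of the sweeping window. One must show that no type-$c_i$ customer inside $(nY_l(ns'),nY_l(nt')]$ is left waiting past $nt'$, nor counted twice, despite the fact that the individual prelimit servers $M_k,\ldots,M_l$ become indistinguishable in the fluid limit and the endpoints of their trajectories are only known up to $o(n)$. Both issues reduce to exploiting the strict separations $\rY_{k-1}<\rY_k$ and $\rY_l<\rY_{l+1}$ (or $\rY_l<\lambda\tau$) together with the Lipschitz continuity provided by Theorem~\ref{thm.existence}, which together prevent any non-block server from encroaching on the window for all sufficiently large $n$ and ensure that the endpoint spillovers are negligible on the fluid scale.
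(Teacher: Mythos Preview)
Your proposal is correct and follows essentially the same route as the paper: a prelimit busyness and skipping argument for (\ref{eqn.busyrate}), and a conservation-of-type-$c_i$-customers count for (\ref{eqn.gengetherfluidB}), passed to the limit via the functional SLLN. The one place where the paper is more explicit than your outline is exactly your ``main obstacle'': rather than arguing directly that endpoint spillovers are $o(n)$, the paper sandwiches the number $N_3^r$ of type-$c_i$ service completions by the block during $(rs,rt)$ between two position-window counts $N_1^r=\sum_{\ell=Y_l^r(rs)+1}^{Y_k^r(rt)-1}\xi_i(\ell)$ and $N_2^r=\sum_{\ell=Y_k^r(rs)}^{Y_l^r(rt)}\xi_i(\ell)$, and then observes that $N_1^r/r$ and $N_2^r/r$ have the same limit $\alpha_{c_i}(\rY_k(t)-\rY_k(s))$ because $\rY_k=\rY_l$ on $(s,t)$.
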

\begin{proof}
Consider a fluid limit of all the processes obtained for some $\omega$ and subsequence $r$, for which the assumptions of the theorem hold.

By the continuity of $\rP_{s_j}(\tau)$  the sets  $S'(\tau),\{\rM_k(\tau),\ldots,\rM_l(\tau)\},S''$ are constant for all $s<\tau<t$, and $\rS(\tau)$ is well defined.
If $\rY_j(\tau) <\rY_{l+1}(\tau) \le \lambda \tau,\, s<\tau<t,\, j=k,\ldots,l$ then for $r$ large enough $Y^r_j(r \tau,\omega)< Y^r_{l+1}(r \tau,\omega) < A (r \tau,\omega),\, \tau\in (s,t),\, j=k,\ldots,l$, which implies that $M^r_k,\ldots,M^r_l=M_k,\ldots,M_l$ are the same for all $r$ large enough,  that
all the servers $M_k,\ldots,M_l$ are busy all the time between  $(rs,rt)$, and  the types of customers which $M_j$ will be serving will be $c_i\in\C(M_j)\backslash \C(M_{l+1},\ldots,M_J)$.   It follows that
\[
 \sum_{c_i\in\C(M_j)\backslash \C(M_{l+1},\ldots,M_J)}  \frac{1}{r} \big( T^r_{M_j,c_i}(rt,\omega) - T^r_{M_j,c_i}(rs,\omega) \big)
= t-s
\]
{and (\ref{eqn.busyrate}) follows. For the same $s,t$ and large enough $r$, we have:}
\[
Y^r_k(rs,\omega)=\min_{k\le j \le l} P^r_{M_j}(rs,\omega),\quad
Y^r_l(rs,\omega)=\max_{k\le j \le l} P^r_{M_j}(rs,\omega),\quad
\]
\[
Y^r_k(rt,\omega)=\min_{k\le j \le l} P^r_{M_j}(rt,\omega),\quad
Y^r_l(rt,\omega)=\max_{k\le j \le l} P^r_{M_j}(rt,\omega).\quad
\]
Consider for $c_i \in C(M_k,\ldots,M_l)\backslash\C(M_{l+1},\ldots,M_J)$ the two counts:
\[
N^r_1(\omega) = \sum_{\ell =Y^r_l(rs,\omega)+1}^{Y^r_k(rt,\omega)-1} \xi_i(\ell,\omega),  \qquad
N^r_2(\omega) = \sum_{\ell =Y^r_k(rs,\omega)}^{Y^r_l(rt,\omega)} \xi_i(\ell,\omega)
\]
These count customers of type $c_i$ which are associated with the time interval $(rs,rt)$:  every customer of type $c_i$ which appears in the first count has started service and finished service within the time period
 $(rs,rt)$.
 The second count includes all the customers of type $c_i$ which have departed in the time interval $(rs,rt)$, including some that started processing at an earlier time, and also those which have started service and not departed yet.

Compare this to
{\[
N^r_3(\omega) = \sum_{j=k}^l\Big( X_{M_j,c_i}\big(T^r_{M_j,c_i}(rt,\omega),\omega\big)
 - X_{M_j,c_i}\big(T^r_{M_j,c_i}(rs,\omega),\omega\big)\Big),
\]
}which counts all the service completions of jobs of type $c_i$, served by one of the servers $M_k,\ldots,M_l$, during the time interval $(rs,rt)$ (recall that $T^r_{M_j,c_i}(rt)-T^r_{M_j,c_i}(rs)$ is the total time that server $M_j$ is processing customers of type $c_i$ within the time interval $(rs,rt)$).
We have that $N^r_2(\omega)\ge N^r_3(\omega) \ge N^r_1(\omega)$.

However,
\[
\lim_{r\to\infty} \frac{1}{r} N^r_1(\omega) = \lim_{r\to\infty} \frac{1}{r} N^r_2(\omega)  =
\alpha_{c_i} (\rY_k(t) - \rY_k(s))=\cdots = \alpha_{c_i} (\rY_l(t) - \rY_l(s)),
\]
while
\[
\lim_{r\to\infty} \frac{1}{r} N^r_3(\omega) =
\sum_{j=k}^l \mu_{M_j,c_i}  \big( \rT_{M_j,c_i}(t) - \rT_{M_j,c_i}(s)\big),
\]
and (\ref{eqn.gengetherfluidB}) follows.
\end{proof}

\begin{corollary}
If  $\rY_{j-1}(t)<\rY_j(t)<\rY_{j+1}(t)$ then
\begin{equation}
\label{eqn.singlespeed}
\drY_j(t) = \left( \sum_{c_i\in\C(M_j)\backslash \C(M_{j+1},\ldots,M_J)} \alpha_{c_i} m_{M_j,c_i} \right)^{-1}
\end{equation}
\end{corollary}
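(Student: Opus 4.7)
The plan is to observe that this corollary is an immediate specialization of Theorem \ref{thm.generaljointserversdynamics} to the case of a single isolated server level, i.e.\ $k=l=j$, so no new argument is needed beyond combining the two identities \eqref{eqn.busyrate} and \eqref{eqn.gengetherfluidB}.

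First I would note that the hypothesis $\rY_{j-1}(t)<\rY_j(t)<\rY_{j+1}(t)$ is a strict inequality, so by continuity of the limits $\rY_i(\cdot)$ it persists on some open interval containing $t$. Hence Theorem \ref{thm.generaljointserversdynamics} applies with $k=l=j$ on that interval, giving the two equations
\begin{equation*}
\sum_{c_i\in\C(M_j)\backslash \C(M_{j+1},\ldots,M_J)} \drT_{M_j,c_i}(t) = 1,
\qquad
\drY_j(t) = \frac{1}{\alpha_{c_i}}\,\mu_{M_j,c_i}\,\drT_{M_j,c_i}(t)
\end{equation*}
for every $c_i\in\C(M_j)\backslash\C(M_{j+1},\ldots,M_J)$, valid at every regular $t$.

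Next I would solve the second identity for $\drT_{M_j,c_i}(t)$, namely
\begin{equation*}
\drT_{M_j,c_i}(t) \;=\; \alpha_{c_i}\, m_{M_j,c_i}\, \drY_j(t),
\end{equation*}
using $m_{M_j,c_i}=1/\mu_{M_j,c_i}$, and substitute into the first identity to obtain
\begin{equation*}
\drY_j(t) \sum_{c_i\in\C(M_j)\backslash \C(M_{j+1},\ldots,M_J)} \alpha_{c_i}\, m_{M_j,c_i} \;=\; 1.
\end{equation*}
Dividing through yields the claimed formula \eqref{eqn.singlespeed}. The only point that warrants a sentence of justification is why $\C(M_j)\backslash\C(M_{j+1},\ldots,M_J)$ is nonempty so that the sum in the denominator is strictly positive; this holds because $M_j$ is busy (being strictly below the arrival line at level $\rY_{j+1}\le\lambda t$, or by reinterpreting the convention that $\mu_{s_j,c_i}=0$ outside $\G$), hence must be serving some type in that set, ensuring the formula is well defined. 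There is no real obstacle here: the corollary is essentially algebraic substitution once Theorem \ref{thm.generaljointserversdynamics} is in hand.
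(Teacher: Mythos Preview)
Your proof is correct and follows essentially the same approach as the paper: specialize Theorem~\ref{thm.generaljointserversdynamics} to $k=l=j$, rewrite \eqref{eqn.gengetherfluidB} as $\drT_{M_j,c_i}(t)=\alpha_{c_i}m_{M_j,c_i}\drY_j(t)$, and sum over the relevant customer types using \eqref{eqn.busyrate}. The paper's proof is the same one-line substitution, without your additional remarks on continuity and nonemptiness of the index set.
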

\begin{proof}
From (\ref{eqn.gengetherfluidB}) we have that $m_{M_j,c_i} \alpha_{c_i} \drY_j(t) =  \drT_{M_j,c_i}(t)$, and summing over $c_i \in \C(M_j)\backslash \C(M_{l+1},\ldots,M_J)$ and using (\ref{eqn.busyrate}) we obtain (\ref{eqn.singlespeed}).
\end{proof}

We refer to equations (\ref{eqn.busyrate})--(\ref{eqn.singlespeed}) as fluid model equations.


\section{Stability}
\label{sec.stability}
We are interested in verifying the following  properties of fluid limits:
\begin{definition}
Denote $|\rP(0)| = - \sum_{j=1}^J \rP_{s_j}(0)$.

{\rm (i)} We say that the fluid model is stable if starting from any fixed \hq{$|\rP(0)|=1$},  there exists $t_0$ such that for almost surely every fluid limit $\lambda t - \rY_1(t) = 0$ for all $t>t_0$.

{\rm (ii)}   We say that the fluid model has complete resource pooling if for all values of $\lambda$, starting from any fixed $|\rP(0)|$, there exists $t_0$ such that for almost surely  every fluid limit $\rY_J(t)-\rY_1(t) = 0$
for all $t>t_0$.

{\rm (iii)}  We say that the fluid model has complete weak resource pooling if for all values of $\lambda$, starting from any fixed $|\rP(0)|$, there exists $t_0$ such that for almost surely  every fluid limit  $\drY_1(t)=\cdots = \drY_J(t)$
\end{definition}

Complete weak resource pooling is the situation in which in the limit, all servers move eventually at the same rate, but not together.  Graphically, this means that the straight lines denoting their limiting paths become parallel, but may never merge.

\begin{definition}
We say that the fluid model of a system under some given policy is maximum throughput with processing rate $\mu^*$ if the fluid model for the given policy is stable for all $\lambda < \mu^*$, and if the fluid model of the  system is unstable for all $\lambda > \mu^*$ under every policy.
\end{definition}

Complete resource pooling and stability of the fluid limits and fluid model have far-reaching consequences for the stochastic system if some technical conditions are satisfied.   In particular, in the following three theorems we will make the technical assumption that in the state space of the Markov processes considered, every bounded set of states is unifromly small.  For definition of  uniformly small sets of states in a Markov process, see Bramson \cite{bramson:08} or Meyn and Tweedie \cite{meyn-tweedie:93}.

\begin{theorem}
\label{thm.ergodic}
Consider the Markov process $\mathscr{Z}(t)$ and define  {$\sum_{j=1}^J \sum_{i=1}^I (Q_{c_i,j}(t)+V_{s_j,c_i}(t))+U(t)$} as its norm.  Assume that every bounded set of states is uniformly small.
If the fluid model of the system is stable then the process $\mathscr{Z}(t)$ is ergodic, i.e. it possesses a stationary distribution, and the distribution of its state at time $t$ converges to this stationary distribution as $t\to\infty$.
\end{theorem}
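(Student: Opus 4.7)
The plan is to adapt the standard fluid-model route to stability developed by Dai \cite{dai:95}, tailored to the Markov process $\mathscr{Z}(\cdot)$ with norm $|z|=\sum_{i,j}(Q_{c_i,j}+V_{s_j,c_i})+U$. Let $\E_z$ denote expectation when $\mathscr{Z}(0)=z$.

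\textbf{Step 1: fluid limits from arbitrary initial states.} I would take a sequence of initial conditions $z_n$ with $|z_n|\to\infty$, form the scaled processes $\rz^n(t):=\mathscr{Z}(|z_n|t)/|z_n|$ driven by independent copies of the primitive stochastic inputs, and establish a fluid-limit statement analogous to Theorem \ref{thm.existence}. Since the initial server positions satisfy $|P^n_{s_j}(0)|\le|z_n|$, the quantities $\rP^n_{s_j}(0)$ are uniformly bounded; the residuals satisfy $V^n_{s_j,c_i}(0)/|z_n|,U^n(0)/|z_n|\in[0,1]$ so they too are relatively compact. Extracting a subsequence and repeating the equicontinuity-plus-Arzelà-Ascoli argument from the proof of Theorem \ref{thm.existence} gives a Lipschitz fluid limit of all the relevant processes, which after a uniformly bounded initial transient (during which the residual work is drained) satisfies the fluid model equations \eqref{eqn.busyrate}--\eqref{eqn.singlespeed} with a scaled initial position vector of norm at most $1$.

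\textbf{Step 2: applying fluid stability.} The fluid stability assumption then delivers a deterministic $t_0$, depending only on $\lambda$ and the network parameters, such that every fluid limit of $\rz^n$ satisfies $\lambda t-\rY_1(t)=0$ for $t\ge t_0$. By \eqref{eqn.fluidqueues} this forces $\bar{Q}_{c_i,j}(t_0)=0$ for all $i,j$, while the fluid residuals are identically zero after the transient, so $|\rz(t_0)|=0$ almost surely along every fluid limit.

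\textbf{Step 3: uniform integrability and the Lyapunov drift.} Standard domination bounds give $Q_{c_i,j}(s)\le A(s)+|z|$, while $V_{s_j,c_i}(s)$ and $U(s)$ are stochastically bounded by the initial residuals plus forward-recurrence terms of $G_{s_j,c_i}$ and $F$. Under a mild strengthening of the standing assumptions to finite $(1+\epsilon)$-moments of interarrival and service distributions, one obtains
\begin{equation*}
\sup_{n}\,\E_{z_n}\!\bigl[\bigl(|\rz^n(t_0)|\bigr)^{1+\epsilon}\bigr]<\infty.
\end{equation*}
Together with the almost sure limit of Step 2, Vitali's convergence theorem gives $\E_{z_n}[|\rz^n(t_0)|]\to 0$, which, rephrased, produces constants $C,\beta>0$ such that
\begin{equation*}
\E_z\!\bigl[|\mathscr{Z}(\beta|z|)|\bigr]\le \tfrac{1}{2}|z|\qquad\text{whenever }|z|\ge C.
\end{equation*}

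\textbf{Step 4: positive Harris recurrence.} The hypothesis that bounded sets of states are uniformly small makes $\{z:|z|\le C\}$ a petite set in the Meyn-Tweedie sense, and all sublevel sets of the Lyapunov function $V(z):=|z|$ inherit petiteness from this. The drift inequality from Step 3, combined with petiteness of sublevel sets, is exactly the hypothesis of the stability criterion of Dai \cite{dai:95} (and equivalently Theorem 8.6 of Meyn and Tweedie \cite{meyn-tweedie:93}), yielding positive Harris recurrence of $\mathscr{Z}(\cdot)$, the existence of a unique stationary distribution $\pi$, and total-variation convergence of the time-$t$ distribution to $\pi$.

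\textbf{Main obstacle.} The crux is Step 3: promoting the almost sure fluid convergence of Step 2 to an expectation-level drift. With only finite first moments on the primitive distributions uniform integrability of $|\rz^n(t_0)|$ is not automatic; a clean argument either presupposes a finite $(1+\epsilon)$-moment or invokes a more delicate truncation-and-renewal bound along the lines of \cite{dai:95}. A companion technicality is extending Theorem \ref{thm.existence} to accommodate nonzero scaled initial values of $V_{s_j,c_i}$ and $U$, which requires handling the residual-time content as a finite and uniformly bounded amount of work superimposed on the Lipschitz estimates already derived in Section \ref{sec.fluidlimits}.
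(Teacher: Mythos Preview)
Your proposal is correct and follows essentially the same route as the paper: the paper's entire proof is the one-line invocation ``This follows immediately from the fundamental theorem of Dai \cite{dai:95} and its extension in the monograph of Bramson \cite{bramson:08},'' and your Steps~1--4 are precisely an unpacking of what that citation entails. The moment-condition and residual-work issues you flag as obstacles are exactly the sort of technicalities absorbed by appealing to Bramson's general framework rather than Dai's original setting.
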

\begin{proof}
This follows immediately from the fundamental theorem of Dai \cite{dai:95} and its extension in the monograph of Bramson \cite{bramson:08}.
\end{proof}
\begin{theorem}
\label{thm.pooledergodic}
For the Markov process $\mathscr{Z}(t)$  define  {$\sum_{j=1}^J \sum_{i=1}^I (Q_{c_i,j}(t)+V_{s_j,c_i}(t))+U(t)$}  as  norm, and  assume that every bounded set of states is uniformly small as in Theorem {\rm \ref{thm.ergodic}}.    Consider the  process $\mathscr{Z}^0(t)$ obtained from $\mathscr{Z}(t)$ by the exclusion of the components $U(t),Q_{c_i,J}(t)$.
If complete resource pooling of the fluid model holds, and if $\rY_J(t) < \lambda t$, then there exists a  measure $\nu^0$ on the state space of $\mathscr{Z}^0(\cdot)$ such that as $t\to \infty$ the distribution of   $\mathscr{Z}^0(t)$ converges to $\nu^0$.
\end{theorem}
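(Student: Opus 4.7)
The plan is to follow the strategy of Theorem~\ref{thm.ergodic}, but applied to the projection $\mathscr{Z}^0(t)$ of the Markov process $\mathscr{Z}(t)$. Although $\mathscr{Z}^0(t)$ is not itself Markovian, its distribution is a deterministic functional of $\mathscr{Z}(t)$, so convergence to $\nu^0$ will follow from a suitable drift condition on the norm
\[
\|\mathscr{Z}^0(t)\|=\sum_{j=1}^{J-1}\sum_{i=1}^{I} Q_{c_i,j}(t)+\sum_{j=1}^{J}\sum_{i=1}^{I} V_{s_j,c_i}(t),
\]
together with the uniformly small set hypothesis inherited from the full chain.

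The first step is to show that every fluid limit of $\|\mathscr{Z}^0\|$ drains to zero in finite time. Starting from an initial condition with $|\rP(0)|=1$, complete resource pooling supplies some $t_0$ after which $\rY_1(t)=\cdots=\rY_J(t)$, and the hypothesis $\rY_J(t)<\lambda t$ ensures that on a neighbourhood of every such $t$ the merged line of servers lies strictly below the arrival line, so Theorem~\ref{thm.generaljointserversdynamics} applies to the merged block $\{M_1,\ldots,M_J\}$. Consequently, by (\ref{eqn.fluidqueues}), each inter-server fluid queue $\rQ_{c_i,j}(t)=\alpha_{c_i}(\rY_{j+1}(t)-\rY_j(t))$ with $j<J$ vanishes for $t>t_0$, and the fluid-scaled remaining service times vanish in view of (\ref{eqn.initialremainderlimits}) together with $\max_{k\le n}V^n_{s_j,c_i}(k)/n\to 0$ a.s.

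The second step is to translate this fluid drainage into a multiplicative drift inequality at the stochastic level. Using the interchange-of-limits argument from Dai~\cite{dai:95}, one expects to obtain $\rho<1$, $K<\infty$ and $T>0$ such that, for every initial state $x$ of $\mathscr{Z}$ with $\|x^0\|$ sufficiently large,
\[
E_x\bigl[\|\mathscr{Z}^0(T\|x^0\|)\|\bigr]\le \rho\|x^0\|+K,
\]
where $x^0$ denotes the projection of $x$ onto the $\mathscr{Z}^0$ coordinates. Here the hypothesis $\rY_J(t)<\lambda t$ is essential, since it guarantees that during the relevant time window the last server is continuously fed, so the fluid analysis of Theorem~\ref{thm.generaljointserversdynamics} applies uniformly in the values of the excluded coordinates $U$ and $Q_{c_i,J}$. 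Combined with the uniformly small set assumption and standard Meyn--Tweedie theory, this drift inequality yields tightness of the marginal law of $\mathscr{Z}^0(t)$, and aperiodicity then upgrades tightness to convergence in total variation to a unique limit $\nu^0$.

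The main obstacle is the second step: because $\mathscr{Z}^0$ is not Markov, the drift inequality must be derived on the full chain $\mathscr{Z}$ while only controlling the projected norm, and the uniformly small property must be transferred from $\mathscr{Z}$ to this projection. A clean route would be to couple $\mathscr{Z}(t)$ with an auxiliary process in which the arrival stream is replaced by an always-saturated supply of customers of the correct type-mix and $U(t)$ is pinned to $0$; one would then verify that this coupling leaves the $\mathscr{Z}^0$ coordinates unchanged on the event $\{\rY_J(t)<\lambda t\}$ while producing a genuinely Markovian reduced chain to which Theorem~\ref{thm.ergodic} applies directly, yielding $\nu^0$ as the stationary law of that reduced chain.
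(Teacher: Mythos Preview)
Your final paragraph is essentially the paper's idea: introduce an auxiliary system with an infinite supply of customers of the correct type-mix, so that $\mathscr{Z}^0(t)$ becomes a genuine Markov process; complete resource pooling makes its fluid model stable, and Dai's theorem then gives ergodicity with stationary law $\nu^0$. So you have located the right construction.

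The gap is in how you get back from the auxiliary system to the original one. Your proposed route is a pathwise coupling that ``leaves the $\mathscr{Z}^0$ coordinates unchanged on the event $\{\rY_J(t)<\lambda t\}$''. But $\rY_J(t)<\lambda t$ is a fluid statement; in the stochastic system, even in overload, the front queue $Q_{c_i,J}(t)$ will hit zero infinitely often, and at those instants the original and saturated systems do \emph{not} evolve identically on the $\mathscr{Z}^0$ coordinates. So the coupling is not exact, and one cannot simply read off the limiting law of $\mathscr{Z}^0(t)$ from the auxiliary chain. Your earlier Steps~1--2 do not rescue this: a multiplicative drift bound on a projected norm, taken on a chain that is transient in the excluded coordinates, is not covered by the standard Meyn--Tweedie machinery; tightness of a marginal is not the same as positive Harris recurrence, and aperiodicity of the full chain does not upgrade marginal tightness to marginal convergence.

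The paper closes this gap not by coupling but by invoking a separate result, the lemma of Adan, Foss, Shneer and Weiss \cite{adan-foss-shneer-weiss:15} on local stability in a transient Markov chain: if $\mathscr{Z}(t)=(\mathscr{Z}^0(t),U(t),Q_{c_i,J}(t))$ is transient because $Q_{c_i,J}(t)\to\infty$, while the ``frozen'' version of $\mathscr{Z}^0$ (the infinite-supply system) is positive recurrent with stationary law $\nu^0$, then the marginal of $\mathscr{Z}^0(t)$ in the original chain converges to $\nu^0$. That lemma is precisely the missing bridge your coupling was meant to supply; without it, or a careful proof of an equivalent statement, the argument is incomplete.
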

\begin{proof}
Consider the same system with infinite supply of work, i.e., there is always a queue of customers waiting behind the most advanced server, of types $c_i\in \C$ i.i.d. with probabilities $\alpha_{c_i}$.  Then in this new system $\mathscr{Z}^0(t)$ is a Markov process, and with the norm {$\sum_{i=1}^I\Big(\sum_{j=1}^{J-1} Q_{c_i,j}(t)+\sum_{j=1}^J V_{s_j(t),c_i}(t)\Big)$} every bounded set of states is uniformly small.
If complete resource pooling holds then  the fluid model of the process $\mathscr{Z}^0(t)$ for the unlimited supply of work system is stable.  Hence, by the fundamental theorem of Dai \cite{dai:95}, the process is ergodic, with some stationary measure $\nu^0$.
Returning to the original system, and the process $\mathscr{Z}(t)$, we have $\mathscr{Z}(t)  = \big( \mathscr{Z}^0(t),U(t),Q_{c_i,J}(t) \big) $
where the process $\mathscr{Z}(t)$ is transient because by $\rY_J(t) < \lambda t$ we have $Q_{c_i,J}(t)\to \infty$ as $t\to\infty$ almost surely.  However, the process $\mathscr{Z}(t)$ exactly satisfies the conditions of the  Lemma of Adan, Foss, Shneer and Weiss \cite{adan-foss-shneer-weiss:15}.  It follows that the distribution of $\mathscr{Z}^0(t)$ converges to $\nu^0$.
\end{proof}


We discuss the meaning of this theorem.  It says that under complete resource pooling if the arrival rate is high the queue in front of all the servers will grow linearly but the servers will stay close together and move at some joint average rate, so that the permutation of the servers and the queues of customers between them will tend to a stationary distribution.

We consider now the case that there is no complete resource pooling.  Consider a partition $(S_1,\ldots,S_L)$, let $S_\ell = \{M_k,\ldots,M_l\}$ be the servers in positions $k,\ldots,l$.  We denote by $Q_\ell = (Q_{c_i,k},\ldots,Q_{c_i,l-1},\,c_i=1,\ldots,I)$  the queues of customers between the servers of $S_\ell$, and by $v_\ell=(v_{M_k,c_i},\ldots,v_{M_l,c_i},\,i=1,\ldots,I)$ the remaining processing times of the servers of $S_\ell$.
\begin{theorem}
\label{thm.decomposedergodic}
Assume that for all $t>0$ there is a fixed partition $(S_1,\ldots,S_L)$ such that $\rY_{S_1}(t)<\cdots \rY_{S_L}(t)<\lambda t$ and $\drY_{S_1}(t)<\cdots \drY_{S_L}(t)<\lambda $.  Consider the processes $\mathscr{Z^\ell}(t) = \big(M_j(t),Q_\ell(t),v_\ell(t):\,\,M_j\in S_\ell\big)$.  Then there exist measures $\nu^\ell$ on the state spaces of $\mathscr{Z^\ell}(\cdot)$ such that  as $t\to \infty$ the distribution of   $\mathscr{Z^\ell}(t)$ converges to $\nu^\ell$ for $\ell=1,\ldots,L$.
\end{theorem}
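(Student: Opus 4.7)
The plan is to mirror the proof of Theorem \ref{thm.pooledergodic}, applied one block at a time. For each $\ell\in\{1,\ldots,L\}$ I construct an auxiliary ``isolated'' parallel service model whose servers are exactly those of $S_\ell$ and whose upstream boundary is replaced by an unlimited supply of work, so that $\mathscr{Z}^\ell$ becomes a Markov process whose fluid model is stable; Dai \cite{dai:95} together with Bramson \cite{bramson:08} then give ergodicity with some stationary $\nu^\ell$, and the lemma of Adan, Foss, Shneer and Weiss \cite{adan-foss-shneer-weiss:15} transfers convergence back to the original process.

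For the construction, take $\ell=L$ first: place an infinite backlog of i.i.d.\ customers with type probabilities $\alpha_{c_i}$ behind the most advanced server in $S_L$, exactly as in the proof of Theorem \ref{thm.pooledergodic}. For $\ell<L$, the hypothesis $\drY_{S_\ell}(t)<\drY_{S_{\ell+1}}(t)$ forces the gap $\rY_{S_{\ell+1}}(t)-\rY_{S_\ell}(t)$ to grow linearly, so the backlog of customers skipped by $S_{\ell+1},\ldots,S_L$ and needing service by $S_1\cup\cdots\cup S_\ell$ --- i.e.\ those of types in $\U(S_1\cup\cdots\cup S_\ell)$ --- grows without bound at a strictly positive effective rate. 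Replace the upstream boundary of $S_\ell$ by an i.i.d.\ infinite supply of customers of those types at the appropriate conditional distribution, and let $S_1,\ldots,S_{\ell-1}$ continue to drain it from behind; the result is an isolated subsystem carried by the server set $S_\ell$.

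Within each isolated subsystem, the hypothesis that all servers in $S_\ell$ share the common fluid position $\rY_{S_\ell}$ amounts to complete resource pooling for $S_\ell$. Applying the fluid model equations \eqref{eqn.busyrate}--\eqref{eqn.gengetherfluidB} inside the block, the internal queues $Q_\ell$ and the internal permutation dynamics drain in finite time, so the auxiliary fluid model is stable. Combined with the inherited uniform smallness of bounded sets, Dai's criterion and Bramson's refinement yield positive Harris recurrence of $\mathscr{Z}^\ell(t)$ in the auxiliary model with unique stationary law $\nu^\ell$. Returning to the original process, $\mathscr{Z}(t)$ is transient because the separating queues between consecutive blocks and $Q_{c_i,J}$ diverge, but the reduced process $\mathscr{Z}^\ell(t)$ differs from its auxiliary counterpart only during the finite transient phase before the separating gap becomes large. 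This is exactly the setting of the Adan--Foss--Shneer--Weiss lemma, so the distribution of $\mathscr{Z}^\ell(t)$ in the original system still converges to $\nu^\ell$ for each $\ell$.

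The hard part will be the block-by-block coupling in the final step. One must verify that once the fluid gap separating $S_\ell$ from $S_{\ell+1}$ is large (which happens a.s.\ in finite time under the strict speed ordering $\drY_{S_\ell}<\drY_{S_{\ell+1}}$), the customer stream actually presented to $S_\ell$ in the original process is statistically indistinguishable from the infinite-supply stream driving the auxiliary process of level $\ell$. This requires a monotone coupling together with the renewal structure of the arrival and service sequences to guarantee that the regeneration and tightness hypotheses of the Adan--Foss--Shneer--Weiss lemma hold simultaneously at every level of the partition.
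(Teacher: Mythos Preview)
Your approach is essentially the paper's: for each block $S_\ell$ build an infinite-supply auxiliary subsystem, argue ergodicity there via fluid stability and Dai's criterion, and invoke the lemma of Adan, Foss, Shneer and Weiss \cite{adan-foss-shneer-weiss:15} to transfer the limit back to the original transient process. The paper's version is terser on one point you may want to adopt: its auxiliary system for level $\ell$ consists of the servers $S_\ell$ \emph{alone} together with customer types $\C(S_\ell)\setminus\C(S_{\ell+1}\cup\cdots\cup S_L)$, with no reference to the lower blocks $S_1,\ldots,S_{\ell-1}$ draining from behind (types served only by lower blocks are skipped by every server in $S_\ell$ and do not affect the $S_\ell$ dynamics), which makes $\mathscr{Z}^\ell$ manifestly Markovian in the auxiliary model; the coupling step you rightly flag as delicate is dispatched by direct citation of the lemma.
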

\begin{proof}
Consider the subsystem of $M_j\in S_\ell$, and
$c_i\in \C(S_\ell)\backslash \C(S_{\ell+1}\cup\cdots\cup S_L)$.  With infinite supply of jobs of these types the system will be ergodic with stationary measure $\nu^\ell$.
The Theorem again follows from the Lemma of Adan, Foss and Weiss \cite{adan-foss-shneer-weiss:15}.
\end{proof}

When there is no resource pooling and the arrival rate is high, the servers will split to  subsets which move together at some joint average rate, tending to a stationary distribution of the queues inside each subset, but the queues separating these subsets of servers will grow without bound.

In general, fluid model equations (\ref{eqn.busyrate}), (\ref{eqn.gengetherfluidB}) do not determine the paths of $\rY$,  and do not provide us with a way of verifying  complete resource pooling or stability of the fluid model.
   This is not simply because we have not found the right fluid model equations necessary for that calculation.  The fact is that for general bipartite graphs with service rates $\mu_{s_j,c_i}$ that depend on both server and the customer type, under FCFS,
   first order and second order moment information alone does not determine the fluid limits of the system.
 This  was discovered in the seminal paper of Foss and Chernova \cite{foss-chernova:98}.  They
consider a system with 3 servers, 3 customer types and an almost complete bipartite compatibility graph as illustrated in Figure \ref{fig.3x3}.
\begin{figure}[htp]
\centering
\includegraphics[width=.25\textwidth]{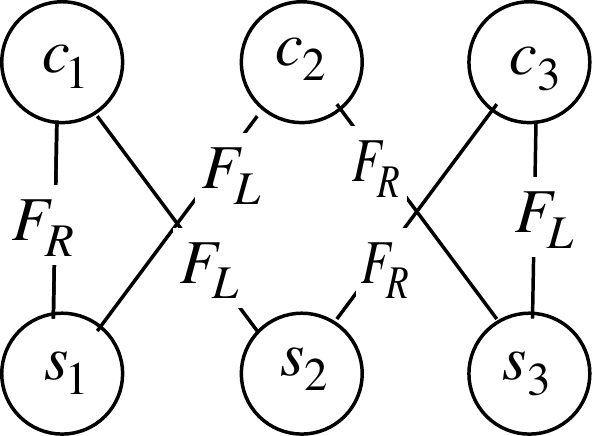}
\caption{A symmetric system with an almost complete 3 server 3 customer types graph.}\label{fig.3x3}
\end{figure}
Here $\alpha_{c_1}=\alpha_{c_2}=\alpha_{c_3}=1/3$, and the service time distributions
are $v_{s_1,c_1} \sim F_R,v_{s_1,c_2} \sim F_L,\,\,v_{s_2,c_1} \sim F_L,\,v_{s_2,c_3} \sim F_R,\,v_{s_3,c_2} \sim F_R,\,v_{s_3,c_3} \sim F_L$, with means $m_L\ne m_R$, so that each server has two service time distributions, and each customer type has two service time distributions.  Foss and Chernova show that for some fixed $\lambda,m_L,m_R$ it is possible to choose $F_L,F_R$ in such a way that the system under FCFS (they actually consider the equivalent JSW policy) is positive Harris recurrent, but under a different choice of $F_L,F_R$ it is transient.

In the rest of the paper we impose further assumptions on the service rates or on the shape of the bipartite graph, under which we derive more detailed fluid model equations.  With the aid of these we can  verify complete resource pooling and  stability of the fluid limits, and find conditions under which they hold.


\section{Service rates depend only on server}
\label{sec.serverdependent}
We now consider the special case where service rates depend only on the server (SD), and not on the customer type which he serves.  We let $m_{s_j}$ and $\mu_{s_j}=1/m_{s_j}$ be the mean service time and the service rate of server $s_j$.  Define $\mu = \sum_{s_j\in \S} \mu_{s_j}$ and $\beta_{s_j}=\mu_{s_j}/\mu$.
Then $\mu$ is the total service capacity of the system, and $\beta_{s_j}$ is the fraction of service capacity provided by server $s_j$.  For a subset $S$ of server types we use the notation $\mu_S=\sum_{s_j\in S} \mu_{s_j},\,\beta_S=\sum_{s_j\in S} \beta_{s_j}$.
In that case we have immediately:
\begin{corollary}
\label{thm.jointserverdeprates}
Assume $\mu_{s_j,c_i}=\mu_{s_j},\,c_i\in \C(s_j),\, j=1,\ldots,J$.    Under the conditions of Theorem {\rm\ref{thm.generaljointserversdynamics}}  a.s. all fluid limits at $s<\tau<t$ must satisfy:
\begin{equation}
\label{eqn.jointserverfluid1}
 \drY_j(\tau) =  \frac{\mu_{\{M_k,\ldots,M_l\}}}
 {\alpha_{\C(M_k,\ldots,M_l)\backslash \C(M_{l+1},\ldots,M_J)}},\quad j=k,\ldots,l.
\end{equation}
\end{corollary}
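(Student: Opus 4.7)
The plan is to derive the corollary by plugging the server-dependent assumption $\mu_{s_j,c_i}=\mu_{s_j}$ into equation (\ref{eqn.gengetherfluidB}), summing over customer types, and using the busy-time equation (\ref{eqn.busyrate}) to collapse the right-hand side.

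First, I would fix a regular $\tau\in(s,t)$ and look at the joint equality (\ref{eqn.gengetherfluidB}) in the form
\begin{equation*}
\alpha_{c_i}\,\drY_j(\tau) \;=\; \sum_{j'=k}^{l} \mu_{M_{j'},c_i}\,\drT_{M_{j'},c_i}(\tau),
\end{equation*}
which holds for every $c_i\in \C(M_k,\ldots,M_l)\setminus \C(M_{l+1},\ldots,M_J)$ and every $j=k,\ldots,l$. Under the SD assumption, $\mu_{M_{j'},c_i}=\mu_{M_{j'}}$ whenever $c_i\in\C(M_{j'})$ and is zero otherwise by the convention preceding Theorem \ref{thm.generaljointserversdynamics}. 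Summing this identity over all $c_i\in \C(M_k,\ldots,M_l)\setminus \C(M_{l+1},\ldots,M_J)$ yields, on the left-hand side, $\alpha_{\C(M_k,\ldots,M_l)\setminus \C(M_{l+1},\ldots,M_J)}\,\drY_j(\tau)$ by the definition of $\alpha_C$.

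For the right-hand side, I would interchange the two summations, using the vanishing convention $\drT_{M_{j'},c_i}=0$ for $c_i\notin\C(M_{j'})$ to restrict the inner sum to $c_i\in \C(M_{j'})\setminus \C(M_{l+1},\ldots,M_J)$:
\begin{equation*}
\sum_{c_i} \sum_{j'=k}^{l} \mu_{M_{j'}}\,\drT_{M_{j'},c_i}(\tau)
\;=\; \sum_{j'=k}^{l}\mu_{M_{j'}}\!\!\sum_{c_i\in \C(M_{j'})\setminus \C(M_{l+1},\ldots,M_J)}\!\!\drT_{M_{j'},c_i}(\tau).
\end{equation*}
By (\ref{eqn.busyrate}), each inner sum equals $1$, so the double sum collapses to $\sum_{j'=k}^{l}\mu_{M_{j'}}=\mu_{\{M_k,\ldots,M_l\}}$. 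Dividing through gives the stated expression for $\drY_j(\tau)$ valid uniformly for $j=k,\ldots,l$.

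There is no real obstacle beyond bookkeeping; the only subtle point is making sure the index set of customer types matches up on the two sides after swapping sums, which is exactly what the zero convention and the fact that $\C(M_k,\ldots,M_l)=\bigcup_{j'=k}^l \C(M_{j'})$ guarantee. Since (\ref{eqn.busyrate}) and (\ref{eqn.gengetherfluidB}) already hold almost surely for every fluid limit, the resulting identity inherits the same almost-sure validity, completing the proof.
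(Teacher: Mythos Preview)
Your proof is correct and follows essentially the same approach as the paper: substitute the SD assumption into (\ref{eqn.gengetherfluidB}), sum over $c_i\in\C(M_k,\ldots,M_l)\setminus\C(M_{l+1},\ldots,M_J)$, interchange the sums on the right, and invoke (\ref{eqn.busyrate}) to collapse each inner sum to $1$. Your explicit bookkeeping with the zero convention is a welcome clarification, but the argument is otherwise identical to the paper's.
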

\begin{proof}
Substituting $\mu_{M_j,c_i} = \mu_{M_j}$ into (\ref{eqn.gengetherfluidB}), and summing over $c_i  \in \C(M_k,\ldots,M_l)\backslash \C(M_{l+1},\ldots,M_J)$ we obtain:
\[
\drY_l(\tau) \alpha_{\C(M_k,\ldots,M_l)\backslash \C(M_{l+1},\ldots,M_J)}   =
\sum_{j=k}^l \mu_{M_j}  \sum_{c_i\in\C(M_j)\backslash \C(M_{l+1},\ldots,M_J)}   \dot\rT_{M_j,c_i}(\tau)
\]
and using (\ref{eqn.busyrate}) the corollary follows.
\end{proof}

This shows that in the SD special case, indeed all the fluid trajectories of $\rY_j$ are along straight lines, as in Figure \ref{fig.fluiddynamics}.  The following theorems and definition characterize the fluid limits of $\rY_j$ completely.  The proofs of these theorems was given in Proposition B10 in \cite{adan-weiss:14}.  We present a slightly simplified proof here for completeness.

{\bf Condition for complete resource pooling in the SD case}:  For every subset of servers $S \ne \emptyset, \S$ and every subset of customer types  $C \ne \emptyset, \C$, the following 3 equivalent conditions hold:
\begin{equation}
\label{eqn.CRPSD}
\beta_{\S(C)} > \alpha_{C}, \qquad  \alpha_{\C(S)} > \beta_S, \qquad \beta_S>\alpha_{\U(S)}.
\end{equation}

The following Lemma has often been used in proofs of fluid stability (see \cite{dai-weiss:96}), and is useful here:
\begin{lemma}
\label{thm.simple}
Let $g(t)$ be an absolutely continuous nonnegative function on $t\ge 0$ and let $\dot{g}(t)$
denote its derivative whenever it exists.

{\rm(i)}
If $g(t)=0$ and $\dot{g}(t)$ exists, then $\dot{g}(t)=0$.

{\rm(ii)} Assume that for some $\epsilon >0$, whenever $g(t)>0$ and $\dot{g}(t)$ exists,  then  $\dot{g}(t)< - \epsilon$.   Then $g(t)=0$ for all $t>\delta$ where $\delta=g(0)/\epsilon$.  Furthermore $g(\cdot)$ is nonincreasing and hence, once it reaches zero, it stays there forever.
\end{lemma}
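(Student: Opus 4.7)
The plan is to handle part (i) by a one-sided difference-quotient argument at a zero of $g$, and then to leverage (i), together with the absolute continuity of $g$ (which lets us recover $g$ from $\int \dot g$), to obtain part (ii) by a straightforward contradiction using the fundamental theorem of calculus.

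For part (i), I would fix a $t>0$ with $g(t)=0$ at which $\dot g(t)$ exists. Nonnegativity of $g$ combined with $g(t)=0$ makes $t$ a global minimum of $g$, so for $h>0$ the forward difference quotient $(g(t+h)-g(t))/h = g(t+h)/h \ge 0$, while for $h<0$ the backward quotient $g(t+h)/h \le 0$. Both one-sided limits must coincide with $\dot g(t)$, so having opposite signs forces them to vanish.

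For part (ii), I would first combine (i) with the standing hypothesis to conclude that $\dot g(t) \le 0$ at almost every $t \ge 0$: by absolute continuity the derivative exists at a.e.\ $t$, and at such $t$ either $g(t)>0$ and $\dot g(t) < -\epsilon < 0$, or $g(t)=0$ and $\dot g(t) = 0$ by (i). Absolute continuity then yields $g(t_2) - g(t_1) = \int_{t_1}^{t_2} \dot g(s)\, ds \le 0$ for $0 \le t_1 \le t_2$, so $g$ is non-increasing, and once $g$ hits zero it stays there by non-negativity. To finish I would show $g$ must reach zero by time $\delta := g(0)/\epsilon$: if $g(s)>0$ for every $s \in [0,\delta]$, then $\dot g(s) < -\epsilon$ at a.e.\ such $s$, and
\[
g(\delta) \;=\; g(0) + \int_0^\delta \dot g(s)\, ds \;\le\; g(0) - \epsilon \delta \;=\; 0,
\]
contradicting $g(\delta)>0$. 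Hence some $t_1 \in [0,\delta]$ satisfies $g(t_1)=0$, and monotonicity gives $g(t)=0$ for every $t>\delta$.

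The only real care needed is the bookkeeping for the almost-everywhere statements: one must combine the pointwise hypothesis on $\{g>0\}$ with the conclusion of (i) on $\{g=0\}$, noting that the set where $\dot g$ fails to exist has measure zero by absolute continuity, so integrating $\dot g$ still recovers $g$ exactly. Beyond that the argument is essentially immediate, with no genuine obstacle.
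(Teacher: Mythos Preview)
Your argument is correct. The paper does not actually supply a proof of this lemma; it simply states it as a known tool and cites Dai and Weiss \cite{dai-weiss:96} for its origin, so there is no ``paper's own proof'' to compare against. Your treatment --- the minimum-point argument for (i), and for (ii) combining (i) with the hypothesis to get $\dot g\le 0$ a.e., integrating via absolute continuity to obtain monotonicity, and finishing with the integral contradiction on $[0,\delta]$ --- is exactly the standard proof one finds in the fluid-stability literature.
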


\begin{theorem}
\label{thm.pooledfluiddynamics}
{\rm(i)} Assume that condition {\rm(\ref{eqn.CRPSD})} holds, then complete resource pooling holds, that is, for any initial conditions there exists $t_0$ such that for every fluid limit $\rY_1(t)=\cdots=\rY_J(t)=\min(\mu t,\lambda t)$ holds for $t>t_0$.

{\rm(ii)} Assume that condition {{\rm(\ref{eqn.CRPSD})} only holds with $\ge$ instead of $>$}.  Then complete weak resource pooling holds.

{\rm(iii)} Assume that  complete resource pooling condition {\rm(\ref{eqn.CRPSD})} is strictly violated.   Then it is not possible to have   $\rY_1(\tau)=\cdots=\rY_J(\tau)<\lambda \tau$  for all $\tau$ in an interval $s<\tau<t$.
\end{theorem}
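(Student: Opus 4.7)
For parts (i) and (ii) the strategy is a Lyapunov argument with $V(\tau) := \rY_J(\tau) - \rY_1(\tau)$, the spread of server positions. This $V$ is nonnegative, Lipschitz, and vanishes exactly when all servers belong to a single block of the fluid ordered partition. At any regular $\tau$ with partition $(\bar S_1(\tau),\ldots,\bar S_L(\tau))$, $L\ge 2$, Corollary \ref{thm.jointserverdeprates} gives the speeds of the bottom and top blocks as $\drY_{\bar S_1}(\tau) = \mu_{\bar S_1}/\alpha_{\U(\bar S_1)}$ (using $\C(\bar S_1)\setminus\C(\bar S_2\cup\cdots\cup \bar S_L) = \U(\bar S_1)$ since the remaining blocks exhaust $\S\setminus \bar S_1$) and $\drY_{\bar S_L}(\tau) = \mu_{\bar S_L}/\alpha_{\C(\bar S_L)}$. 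Writing $\mu_S = \mu\beta_S$, the strict CRP inequalities $\beta_{\bar S_1} > \alpha_{\U(\bar S_1)}$ and $\alpha_{\C(\bar S_L)} > \beta_{\bar S_L}$ (two of the equivalent forms in (\ref{eqn.CRPSD})) force $\drY_{\bar S_1} > \mu > \drY_{\bar S_L}$, and hence $\dot V(\tau) = \drY_{\bar S_L} - \drY_{\bar S_1} < 0$.

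For (i), since the set of nontrivial ordered partitions of $\S$ is finite, the minimum $\epsilon := \min(\drY_{\bar S_1} - \drY_{\bar S_L})$ over all such partitions is strictly positive, yielding the uniform bound $\dot V(\tau)\le -\epsilon$ whenever $V(\tau)>0$. Lemma \ref{thm.simple}(ii) then gives $V(\tau) = 0$ for all $\tau\ge V(0)/\epsilon$ and $V$ remains at zero thereafter. Once merged, Corollary \ref{thm.jointserverdeprates} with $k=1$, $l=J$ yields common speed $\mu/\alpha_\C = \mu$ while the block is strictly below $\lambda\tau$, and speed $\lambda$ once it catches up, giving the asymptotic common position $\min(\mu t, \lambda t)$.

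For (ii), the same computation with $\ge$ in place of $>$ gives only $\dot V \le 0$, so $V$ is non-increasing. The key further observation is that no splitting of a merged block can occur, because within a joint block Corollary \ref{thm.jointserverdeprates} prescribes a single speed shared by all its servers. Thus the fluid ordered partition can only coarsen and, since there can be at most $J-1$ merges, it stabilizes at some $(\bar S^*_1,\ldots,\bar S^*_{L^*})$ after a finite time $t_0$. On the subsequent regime the constant strict ordering of $\rY_{\bar S^*_\ell}$'s forces $\drY_{\bar S^*_1}\le\cdots\le\drY_{\bar S^*_{L^*}}$ (otherwise some gap would close in finite time), while the non-strict CRP gives $\drY_{\bar S^*_1}\ge\mu\ge\drY_{\bar S^*_{L^*}}$; both chains collapse to $\drY_{\bar S^*_1} = \cdots = \drY_{\bar S^*_{L^*}}$, which is complete weak resource pooling.

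For (iii), the plan is a feasibility argument directly from the fluid equations. Suppose $\rY_1(\tau) = \cdots = \rY_J(\tau) < \lambda\tau$ on an open interval. Corollary \ref{thm.jointserverdeprates} with $k=1$, $l=J$ gives common speed $\mu$, and substituting into (\ref{eqn.gengetherfluidB}) yields $\mu\alpha_{c_i} = \sum_{s_j\in \S(c_i)}\mu_{s_j}\drT_{s_j,c_i}(\tau)$ for each $c_i\in\C$, while (\ref{eqn.busyrate}) gives $\sum_{c_i\in\C(s_j)}\drT_{s_j,c_i}(\tau) = 1$ and $\drT_{s_j,c_i}(\tau)\ge 0$. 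Summing the first identity over any $C\subseteq\C$ and interchanging the order of summation produces $\mu\alpha_C = \sum_{s_j\in\S(C)}\mu_{s_j}\sum_{c_i\in C\cap\C(s_j)}\drT_{s_j,c_i} \le \sum_{s_j\in\S(C)}\mu_{s_j} = \mu\beta_{\S(C)}$, i.e., $\alpha_C\le\beta_{\S(C)}$ for all $C$. A strict violation of (\ref{eqn.CRPSD}) in the equivalent form $\alpha_C>\beta_{\S(C)}$ for some $C$ contradicts this, proving (iii). The main obstacle across all three parts is the partition bookkeeping---specifically the no-split observation in (ii) together with ensuring the uniform $\epsilon$ in (i)---but once the spread is chosen as the Lyapunov function and only the extreme block speeds appear in $\dot V$, the rest follows by pairing the three equivalent forms of (\ref{eqn.CRPSD}) with the appropriate block.
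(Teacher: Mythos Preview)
Your arguments for (i) and (iii) are correct and follow the paper's approach almost verbatim: (i) uses $V=\rY_J-\rY_1$ as Lyapunov function, Corollary~\ref{thm.jointserverdeprates} to get $\drY_{\bar S_1}>\mu>\drY_{\bar S_L}$, finiteness of partitions for a uniform $\epsilon$, and Lemma~\ref{thm.simple}; (iii) derives common speed $\mu$ from the full block and then contradicts it by summing (\ref{eqn.gengetherfluidB}) over a violating set (your inequality $\alpha_C\le\beta_{\S(C)}$ is the paper's $\beta_S\ge\alpha_{\U(S)}$ in the equivalent form). One minor omission in (i): when bounding $\drY_{\bar S_L}$ you must also allow for $\rY_{\bar S_L}(\tau)=\lambda\tau$; the paper covers this by writing $\drY_J=\min\bigl(\lambda,\mu\beta_{\bar S_L}/\alpha_{\C(\bar S_L)}\bigr)\le\min(\lambda,\mu)$, which still yields $\drY_J<\drY_1$.

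The genuine gap is in (ii). Your ``no splitting'' step is not established by the sentence you offer. Corollary~\ref{thm.jointserverdeprates} says that \emph{if} a set of servers moves together on an interval \emph{then} they share the indicated speed; it does not say that servers currently sharing a position must continue to do so. Whether a block stays together is exactly the content of Theorem~\ref{thm.pooleddynamics}: it requires the induced subsystem to satisfy its own CRP condition, and non-strict CRP for the full system does not automatically deliver this for every block that can arise along the trajectory. Without an argument ruling out splitting, your claim that the partition only coarsens and therefore stabilizes after finitely many merges is unsupported, and the rest of your (ii) depends on it.

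The paper sidesteps this issue by induction on the number of subsets $S$ with $\beta_S=\alpha_{\U(S)}$. In the base case of a single such $S$ (and hence $\overline S$), every partition whose bottom block $\bar S_1$ is not $S$ or $\overline S$ still has $\beta_{\bar S_1}>\alpha_{\U(\bar S_1)}$ strictly, so $\drY_1>\mu\ge\drY_J$ and the spread strictly decreases; the only partitions that do not force strict decrease are $\{\S\}$ and $\{S,\overline S\}$, on both of which all speeds equal $\mu$. Additional equalities are absorbed by induction. This route never needs to assert that merged blocks cannot split.
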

\begin{proof}
(i)  Assume that (\ref{eqn.CRPSD}) holds, and that at time $t$ the servers are split into the ordered partition $\bar{\S}(t)=(\rS_1,\ldots,\rS_L)$, and each of these subsets of servers are moving together.

By Corollary \ref{thm.jointserverdeprates},
\[
\drY_1(t) = \mu \frac{\beta_{\rS_1}}{\alpha_{\U(\rS_1)}},  \qquad
\drY_J(t) = \min\Big( \lambda, \mu \frac{\beta_{\rS_L}}{\alpha_{\C(\rS_L)}} \big).
\]
By (\ref{eqn.CRPSD}), $\frac{\beta_{\rS_1}}{\alpha_{\U(\rS_1)}}>1$ while
 $\frac{\beta_{\rS_L}}{\alpha_{\C(\rS_L)}} <1$.  Hence $\drY_1(t) > \mu$ while $\drY_J(t) \le\min(\lambda,\mu)$, so that
 $\frac{d}{dt} \big(\rY_J(t) -  \rY_1(t)\big) < 0$.
 By looking at the finite number of all different splits we can find $\epsilon>0$ such that
 $\frac{d}{dt} \big(\rY_J(t) -  \rY_1(t)\big) < \epsilon <0$.  {(i) then follows from  Lemma
\ref{thm.simple}.}

(ii)  Assume first that for some $S$, $\beta_S = \alpha_{\U(S)}$, in which case also
$\beta_{\overline{S}} = \alpha_{\C(\overline{S})}$, and consider the case that for all other subsets, (\ref{eqn.CRPSD}) holds.  Assume at time $t$ a partition $\bar{\S}(t)=(S_1,\ldots,S_L)$ in which $S_1$ is netiher $S$ nor $\overline{S}$.  In that case by the argument of (i), $\drY_1(t) > \drY_J(t)$.  This shows that for some $t_0$ we have for all $t>t_0$ the trajectories are given by the partition  $\bar{\S}(t)=\{S,\overline{S}\}$.   The proof for any number of weak inequalities in   (\ref{eqn.CRPSD}) follows by induction.

(iii)  If  resource pooling is strictly violated then there exists a subset of the servers, $S=\{M_1,\ldots,M_L\}$, such that $\beta_S < \alpha_{\U(S)}$.
Assume contrary to the statement of the proposition that there exists a fluid limit for which $\rY_1(\tau)=\cdots=\rY_J(\tau)<\lambda \tau$ for $\tau\in [t,t+\Delta],\, \Delta>0$.  Denote the common value of $\rY_j,\drY_j,\,j=1,\ldots,J$ by $\rY_{\mbox{Common}},\drY_{\mbox{Common}}$.
Consider customer types $c_i \in \U(M_1,\ldots,M_L)$.
By  (\ref{eqn.busyrate})-(\ref{eqn.gengetherfluidB}) we have:
\[
\drY_{\mbox{Common}}(\tau) \alpha_{\U(M_1,\ldots,M_L)} =
 \sum_{j=1}^L \mu_{M_j} \Big(\sum_{c_i\in \U(M_1,\ldots,M_L)} \drT_{M_j,c_i} (\tau)\Big) \le  \sum_{j=1}^L \mu_{M_j}
\]
Hence we obtain
\[
\drY_{\mbox{Common}}(\tau) \le \mu \frac{\beta_S}{\alpha_{\U(S)}} < \mu.
\]
On the other hand, if $\rY_1(\tau)=\cdots=\rY_J(\tau)<\lambda \tau$ for $\tau\in [t,t+\Delta],\, \Delta>0$, then by summing (\ref{eqn.gengetherfluidB}) over all servers and all customer types and using (\ref{eqn.busyrate}), we obtain $\drY_{\mbox{Common}}(\tau) = \mu$.  This contradiction proves (ii).
\end{proof}

\begin{definition}
\label{thm.subsetpooling}
Consider a partition of the servers into subsets $S',S,S''$.  We say that $S$ satisfies complete resource pooling condition {\rm(\ref{eqn.CRPSD})} between $S'$ and $S''$ (the order of $S'$ before $S''$ is important here), if the subsystem which consists of servers $s_j\in S$, and the customer types $c_i \in \C(S)\backslash \C(S'')$, with
$\tilde{\beta}_{s_j}=\beta_{s_j}/\beta_S$, $\tilde{\alpha}_{c_i} = \alpha_{c_1}/\alpha_{\C(S)\backslash \C(S'')}$, satisfies {\rm(\ref{eqn.CRPSD})}.
\end{definition}

We now have the following theorem, which enables us to trace the exact piecewise linear trajectories of the fluid model of the system:

\begin{theorem}
\label{thm.pooleddynamics}
Consider a fluid limit  with $\rY_{k-1}(t)<\rY_k(t)=\cdots=\rY_l(t)<\rY_{l+1}(t) \le \lambda t$ for some $t$   and  let $\bar{\S}(t)=(S',\{M_k,\ldots,M_l\},S'')$ be the corresponding partition of the servers.  Then
\begin{equation}
\label{eqn.jointserverfluid}
\drY_j(t) = \mu \frac{\beta_{\{M_k,\ldots,M_l\}}}{\alpha_{\C(M_k,\ldots,M_l)\backslash \C(S'')}},\quad j=k,\ldots,l
\end{equation}
during  $t<\tau<t+\Delta$ for some $\Delta>0$, if and only if $\{M_k,\ldots,M_l\}$ satisfies  complete resource pooling condition of Definition {\rm\ref{thm.subsetpooling}}  between $S'$ and $S''$.
\end{theorem}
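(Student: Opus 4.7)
The plan is to reduce the analysis of the middle block $\{M_k,\ldots,M_l\}$ to that of an auxiliary standalone SD fluid system and then invoke Theorem~\ref{thm.pooledfluiddynamics} in that auxiliary system. Since $\rY_{k-1}(t)<\rY_k(t)$ and $\rY_l(t)<\rY_{l+1}(t)$ are strict and the $\rY_j$ are continuous by Theorem~\ref{thm.existence}, both inequalities persist on some open interval $(t,t+\Delta')$ with $\Delta'>0$. On this interval every server in $S''$ is strictly ahead of the block and therefore absorbs every fluid customer of any type $c_i\in \C(S'')$ before such a customer can reach the block. Consequently the block effectively sees only the customer types $\C(M_k,\ldots,M_l)\setminus \C(S'')$, which arrive at fluid rate $\lambda\alpha_{\C(M_k,\ldots,M_l)\setminus\C(S'')}$. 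After rescaling to unit arrival intensity, the block together with these types is exactly an SD subsystem with the parameters $\tilde\beta_{s_j}$, $\tilde\alpha_{c_i}$ of Definition~\ref{thm.subsetpooling}.

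For the ``if'' direction, I would suppose the strict pooling condition (\ref{eqn.CRPSD}) holds in the rescaled subsystem. The block is already pooled at the start of the interval by hypothesis, so the gap $\rY_l-\rY_k$ is zero at $t$. The proof of Theorem~\ref{thm.pooledfluiddynamics}(i) applied to the subsystem shows that this gap has a strictly negative derivative whenever positive; by Lemma~\ref{thm.simple}(ii) it stays at zero once zero. Hence $\rY_k(\tau)=\cdots=\rY_l(\tau)$ throughout $(t,t+\Delta')$, and Corollary~\ref{thm.jointserverdeprates} supplies the common velocity, which by substitution of $\mu_{M_j}$ and summation over $c_i$ coincides with (\ref{eqn.jointserverfluid}).

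For the ``only if'' direction, I would assume that (\ref{eqn.jointserverfluid}) holds throughout some $(t,t+\Delta)$; in particular $\rY_k(\tau)=\cdots=\rY_l(\tau)$ there. If some subset $T\subseteq\{M_k,\ldots,M_l\}$ violated the subsystem's pooling condition strictly, then Theorem~\ref{thm.pooledfluiddynamics}(iii) applied to the subsystem would forbid this common value on any interval, contradicting the assumption. If instead the subsystem's condition held only with weak equality on some partition, the structural conclusion drawn in the proof of Theorem~\ref{thm.pooledfluiddynamics}(ii) would force the block to decompose into distinct ordered sub-blocks (pooled internally but separated from one another), again contradicting $\rY_k(\tau)=\cdots=\rY_l(\tau)$ on the interval. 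Therefore the strict condition of Definition~\ref{thm.subsetpooling} must hold.

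The main obstacle is the rigorous justification of the subsystem reduction: one must verify that the fluid model equations (\ref{eqn.busyrate})--(\ref{eqn.gengetherfluidB}) restricted to the block on $(t,t+\Delta')$ are \emph{identical} to the fluid model equations of the standalone SD system with servers $\{M_k,\ldots,M_l\}$, customer types $\C(M_k,\ldots,M_l)\setminus \C(S'')$, rescaled arrival probabilities $\tilde\alpha_{c_i}$, and service capacities $\tilde\beta_{s_j}$. The crucial inputs are (a) that types $c_i\in \C(S'')$ contribute nothing to $\drT_{M_j,c_i}$ for $M_j$ in the block, because $S''$ is strictly ahead and intercepts every such arriving fluid customer, and (b) that the i.i.d.\ type composition of the original arrival stream is preserved in the fluid limit with the correct proportions $\tilde\alpha_{c_i}$. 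Once this reduction is in place, both directions of the equivalence follow mechanically from Theorem~\ref{thm.pooledfluiddynamics}.
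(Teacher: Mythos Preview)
Your approach is essentially the same as the paper's: isolate the middle block $\{M_k,\ldots,M_l\}$ as a standalone SD subsystem with customer types $\C(M_k,\ldots,M_l)\setminus\C(S'')$ and the rescaled parameters of Definition~\ref{thm.subsetpooling}, then invoke Theorem~\ref{thm.pooledfluiddynamics}. The one notable difference is in how the reduction is justified. You work at the fluid level, arguing that equations (\ref{eqn.busyrate})--(\ref{eqn.gengetherfluidB}) for the block coincide with the fluid equations of the standalone subsystem. The paper instead passes back to the prelimit: for the originating $\omega$ and subsequence $r$, once $r$ is large the servers $M_k,\ldots,M_l$ literally operate as an isolated FCFS--ALIS subsystem over $(rt,rt+r\Delta)$, so the block's fluid trajectories \emph{are} fluid limits of that standalone system and Theorem~\ref{thm.pooledfluiddynamics} applies verbatim. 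This prelimit route cleanly sidesteps the ``main obstacle'' you flag, since Theorem~\ref{thm.pooledfluiddynamics} is stated for fluid limits rather than for arbitrary solutions of the fluid equations.

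One small caution on your ``only if'' direction: your appeal to Theorem~\ref{thm.pooledfluiddynamics}(ii) to rule out the weak-equality case asserts that the block ``would decompose into distinct ordered sub-blocks,'' but the proof of (ii) only shows that the limiting partition is $\{S,\overline S\}$ with equal speeds---it does not show the sub-blocks are separated when they start together at time $t$. The paper's proof is equally terse here (it simply says ``apply Theorem~\ref{thm.pooledfluiddynamics}''), so this is not a divergence from the paper so much as a borderline case neither treatment spells out.
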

\begin{proof}
If $\rY_{k-1}(t)<\rY_k(t)=\cdots=\rY_l(t)<\rY_{l+1}(t) \le \lambda t$ then by continuity, for some $\Delta$:
$\rY_{k-1}(\tau)<\rY_k(\tau)\le \cdots\le \rY_l(\tau)\le \rY_{l+1}(\tau) \le \lambda \tau$ for $t<\tau<t+\Delta$,  and so going back to the originating $\omega$ and subsequence $r$ for large enough $r$, we will have
 $\rY^r_{k-1}(r \tau)<\rY^r_k(r \tau)\le \cdots\le \rY^r_l(r \tau)\le \rY^r_{l+1}(r \tau) \le \lambda r \tau$ for $rt<r\tau<rt+r \Delta$.  In other words, servers $M_k,\ldots,M_l$ will serve customer types
  $c_i \in C(M_k,\ldots,M_l)\backslash C(M_{l+1},\ldots,M_J)$ as an isolated FCFS-ALIS sub-system, in the time interval $(r t, r t + r \Delta)$.  The theorem then follows by applying Theorem \ref{thm.pooledfluiddynamics} to this subsystem.
\end{proof}

\begin{corollary}
\label{thm.fluidstability}
Under complete resource pooling, the fluid model is stable if and only if $\lambda < \mu$
\end{corollary}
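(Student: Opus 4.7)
The plan is to obtain the corollary as an immediate consequence of Theorem \ref{thm.pooledfluiddynamics}(i). Under condition (\ref{eqn.CRPSD}), that theorem already hands us, for every fluid limit starting from any fixed $|\rP(0)|$, an explicit asymptotic identity valid past some finite $t_0$:
\[
\rY_1(t)=\cdots=\rY_J(t)=\min(\mu t,\lambda t).
\]
The definition of fluid stability requires $\lambda t - \rY_1(t)=0$ for all $t>t_0$, so the whole question collapses to comparing $\min(\mu t,\lambda t)$ with $\lambda t$, i.e.\ to comparing $\mu$ with $\lambda$.

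For the sufficiency direction I would fix $\lambda<\mu$. Then $\min(\mu t,\lambda t)=\lambda t$ for every $t>0$, so the theorem gives $\rY_1(t)=\lambda t$ on $(t_0,\infty)$, which is precisely fluid stability.

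For necessity I would argue contrapositively. Suppose $\lambda>\mu$. Then $\min(\mu t,\lambda t)=\mu t$, so Theorem \ref{thm.pooledfluiddynamics}(i) gives $\rY_1(t)=\mu t$ for $t>t_0$, and therefore
\[
\lambda t - \rY_1(t) = (\lambda-\mu)t \to \infty,
\]
which rules out stability. The boundary $\lambda=\mu$ is degenerate (the servers exactly keep pace with arrivals, so technically $\lambda t-\rY_1(t)=0$ for $t>t_0$, but the system is at its critical load); the strict inequality in the statement reflects the standard convention, used throughout the paper, of excluding the critical case from the stability region.

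I do not expect any real obstacle: the substantive work has already been carried out in Theorem \ref{thm.pooledfluiddynamics}(i), which under (\ref{eqn.CRPSD}) produced the piecewise-linear trajectory of $\rY_1$ explicitly via Lemma \ref{thm.simple} applied to $\rY_J-\rY_1$. The corollary is a short translation of that trajectory into the language of Definition 2, with the two directions being symmetric case analyses on the sign of $\mu-\lambda$.
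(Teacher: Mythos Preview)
Your proposal is correct and matches the paper's treatment: the corollary is stated without proof, as an immediate consequence of Theorem~\ref{thm.pooledfluiddynamics}(i), and your argument is precisely the intended reading-off of the explicit trajectory $\rY_1(t)=\min(\mu t,\lambda t)$ against the definition of fluid stability. Your remark on the critical case $\lambda=\mu$ is also appropriate, since the theorem's formula is slightly informal there and the paper follows the usual convention of excluding it.
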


It is shown in \cite{adan-weiss:14} that if resource pooling does not hold then there exists a unique decomposition of the system into subsystems $(\S^{(1)},\C^{(1)}),\ldots,(\S^{(L)},\C^{(L)})$ with $\C^{(\ell)} = \U(\S_1\cup\cdots\cup\S^{(\ell)})\backslash \U(\S_1\cup\cdots\cup\S^{(\ell-1)})$ and  service rates $\mu^{(\ell)}= \mu \beta_{\S^{(\ell)}}$, and there are then values $\lambda^{(1)}<\cdots<\lambda^{(L)}$ so that system $(\S^{(\ell)},\C^{(\ell)})$ on its own is stable for all $\lambda < \lambda^{(\ell)}$, and the combined system exhibits local stability.  These results carry over to our system.

In summary, for the SD case we get the complete traces of the fluid model of the system, including answers to questions of stability, resource pooling, or decomposition, under FCFS policy.  In fact the fluid models are independent of the service time distributions, and depend only on first order moments.  In particular, the results are the same as those obtained for the system with Poisson arrivals and exponential service rates.

On the question of matching rates, the fluid model in not informative enough.  While we can obtain matching rates in the Poisson-exponential case as done in \cite{adan-weiss:11}, we cannot calculate matching rates for general service time distributions in the SD case.  We return to this question in Section \ref{sec.simulation}.  Matching rates can be calculated for some special bipartite graphs --- we do that in Section \ref{sec.computablematchingrates}.


\section{Service rates depend only on customer type}
\label{sec.customerdependent}

We now consider the special case where service rates depend only on the customer type (CD), regardless of which of the compatible servers is serving.  We let $m_{c_i}$ and $\mu_{c_i}=1/m_{c_i}$ be the mean service time and the service rate for customer type $c_i$.  In that case the total service capacity of the system is $|\S|=J$, for the $J$ servers, but capacity for each subset $C$ of customer types is $|\S(C)|$, the number of compatible servers.

In that case we have immediately:
\begin{corollary}
\label{thm.jointcustdeprates}
Assume $\mu_{s_j,c_i}=\mu_{c_i}$, for $\,s_j\in \S(c_i),\, i=1,\ldots,I$.    Under the conditions of Theorem \ref{thm.generaljointserversdynamics}  a.s. all fluid limits at $s<\tau<t$ must satisfy:
\begin{equation}
\label{eqn.jointserverfluidCD}
 \drY_j(\tau) =  \frac{l-k+1}
 {\sum_{c_i\in \C(M_k,\ldots,M_l)\backslash \C(M_{l+1},\ldots,M_J)}  \alpha_{c_i} m_{c_i}},\qquad j=k,\ldots,l.
\end{equation}
\end{corollary}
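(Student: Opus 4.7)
The plan is to mirror the proof of Corollary \ref{thm.jointserverdeprates} (the SD case), only swapping the roles of server and customer in the averaging step. Specifically, under the assumption $\mu_{M_j,c_i}=\mu_{c_i}$, equation (\ref{eqn.gengetherfluidB}) of Theorem \ref{thm.generaljointserversdynamics} reads
\[
\alpha_{c_i}\,\drY_j(\tau) \;=\; \mu_{c_i}\sum_{j=k}^{l} \drT_{M_j,c_i}(\tau)
\quad\text{for each } c_i\in \C(M_k,\ldots,M_l)\setminus \C(M_{l+1},\ldots,M_J),
\]
so that, multiplying through by $m_{c_i}=1/\mu_{c_i}$, we get
\[
\alpha_{c_i} m_{c_i}\,\drY_j(\tau) \;=\; \sum_{j=k}^{l} \drT_{M_j,c_i}(\tau).
\]

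Next I would sum this identity over all $c_i \in \C(M_k,\ldots,M_l)\setminus \C(M_{l+1},\ldots,M_J)$. On the right, interchanging the order of summation gives $\sum_{j=k}^{l} \sum_{c_i} \drT_{M_j,c_i}(\tau)$, and by the convention $\drT_{M_j,c_i}=0$ for $(M_j,c_i)\notin \G$, the inner sum is exactly $\sum_{c_i\in \C(M_j)\setminus \C(M_{l+1},\ldots,M_J)} \drT_{M_j,c_i}(\tau)$, which equals $1$ by the busyness relation (\ref{eqn.busyrate}). Thus the right-hand side collapses to $l-k+1$, while the left-hand side becomes $\drY_j(\tau)\sum_{c_i\in \C(M_k,\ldots,M_l)\setminus \C(M_{l+1},\ldots,M_J)} \alpha_{c_i} m_{c_i}$. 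Dividing yields the claimed formula.

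The proof is essentially a two-line computation, and the only subtlety worth flagging is the reconciliation of the index sets: (\ref{eqn.busyrate}) is stated with the inner sum over $c_i\in \C(M_j)\setminus \C(M_{l+1},\ldots,M_J)$ for a single server $M_j$, whereas here we first sum over the larger set $\C(M_k,\ldots,M_l)\setminus \C(M_{l+1},\ldots,M_J)$. Since $M_j$ cannot expend service time on a customer type outside $\C(M_j)$, the extra indices contribute zero, so the two sums agree. No further analytic input (Lipschitz estimates, Arzela--Ascoli, etc.) is needed beyond what Theorem \ref{thm.generaljointserversdynamics} already provides, so there is no substantial obstacle.
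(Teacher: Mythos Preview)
Your proof is correct and follows essentially the same route as the paper: substitute $\mu_{M_j,c_i}=\mu_{c_i}$ into (\ref{eqn.gengetherfluidB}), multiply by $m_{c_i}$, sum over $c_i\in \C(M_k,\ldots,M_l)\setminus \C(M_{l+1},\ldots,M_J)$, and invoke (\ref{eqn.busyrate}) to collapse the right-hand side to $l-k+1$. Your explicit remark about reconciling the index sets via the convention $\drT_{M_j,c_i}=0$ for $(M_j,c_i)\notin\G$ is a nice clarification that the paper leaves implicit.
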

\begin{proof}
Substituting $\mu_{M_j,c_i} = \mu_{c_i}$ into (\ref{eqn.gengetherfluidB}), we have:
\[
 \drY_j(\tau)  \alpha_{c_i} m_{c_i}  = \sum_{j=k}^l  \dot\rT_{M_j,c_i}(\tau),
 \qquad c_i\in \C(M_k,\ldots,M_l)\backslash \C(M_{l+1},\ldots,M_J).
\]
and summing over all $c_i\in \C(M_k,\ldots,M_l)\backslash \C(M_{l+1},\ldots,M_J)$  we get by
(\ref{eqn.busyrate}) that
\[
 \drY_j(\tau) \sum_{c_i\in \C(M_k,\ldots,M_l)\backslash \C(M_{l+1},\ldots,M_J)}
  \alpha_{c_i} m_{c_i}  = l - k +1.
\]
\end{proof}

This shows that also in the special case of CD  all the fluid trajectories of $\rY_j$ are along straight lines, as in Figure \ref{fig.fluiddynamics}.
The following definition and theorems characterize the fluid limits of $\rY_j$ completely.

{\bf Condition for complete resource pooling in the CD case}:  For every subset of servers $C \ne \emptyset, \C$:
\begin{equation}
\label{eqn.CRPCD}
\frac{|\S(C)|}{|\S|} >
\frac{\sum_{c_i\in C} \alpha_{c_i} m_{c_i}}
{\sum_{c_i\in \C} \alpha_{c_i} m_{c_i}}.
\end{equation}

\begin{theorem}
\label{thm.pooledCD}
{\rm(i)} If condition {\rm(\ref{eqn.CRPCD})} holds then complete resource pooling holds, i.e., for some $t_0$ and for any $\lambda$, $\rY_1(t)=\cdots=\rY_J(t)$ for all $t>t_0$.

{\rm(ii)}  If {{\rm(\ref{eqn.CRPCD})} holds only with $\ge$ replacing $>$}, then complete weak resource pooling holds.

{\rm(iii)} If {\rm(\ref{eqn.CRPCD})}  is strictly violated then complete resource pooling does not hold.
\end{theorem}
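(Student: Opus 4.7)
My plan is to mirror the structure of the SD analog, Theorem \ref{thm.pooledfluiddynamics}, using Corollary \ref{thm.jointcustdeprates} in place of Corollary \ref{thm.jointserverdeprates}. Write $\mu^\star = J/\sum_{c_i\in\C}\alpha_{c_i}m_{c_i}$ for the common rate at which all $J$ servers would move if taken as a single block, obtained from (\ref{eqn.jointserverfluidCD}) with $k=1,\,l=J$. A useful preliminary is the ``server-side dual'' of (\ref{eqn.CRPCD}), namely $\sum_{c_i\in\C(S)}\alpha_{c_i}m_{c_i}\big/\sum_{c_i\in\C}\alpha_{c_i}m_{c_i} > |S|/J$ for every proper nonempty $S\subsetneq\S$; this follows by applying (\ref{eqn.CRPCD}) to $C = \U(\bar S) = \C\setminus\C(S)$ and noting $|\S(\U(\bar S))|\le J-|S|$ (the boundary case $\U(\bar S)=\emptyset$ being immediate).

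For (i), fix a regular time $t$ at which $\bar\S(t) = (S_1,\ldots,S_L)$ with $L\ge 2$. Corollary \ref{thm.jointcustdeprates} applied to the extreme groups gives
\[
\drY_1(t) = \frac{|S_1|}{\sum_{c_i\in\U(S_1)}\alpha_{c_i}m_{c_i}},\qquad \drY_J(t) \le \min\!\left(\lambda,\, \frac{|S_L|}{\sum_{c_i\in\C(S_L)}\alpha_{c_i}m_{c_i}}\right).
\]
Applying (\ref{eqn.CRPCD}) with $C=\U(S_1)$ together with $|\S(\U(S_1))|\le|S_1|$ yields $\drY_1 > \mu^\star$; the dual form applied to $S_L$ yields $|S_L|/\sum_{c_i\in\C(S_L)}\alpha_{c_i}m_{c_i} < \mu^\star$. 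Hence $\drY_J \le \mu^\star < \drY_1$. Since only finitely many partitions are possible, this strict gap is uniformly bounded below by some $\epsilon>0$ whenever $\rY_1(t) < \rY_J(t)$, and Lemma \ref{thm.simple}(ii) applied to $g(t) = \rY_J(t) - \rY_1(t)$ forces $\rY_J(t) = \rY_1(t)$ for all $t > g(0)/\epsilon$.

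For (ii), the same estimates now read $\drY_1 \ge \mu^\star \ge \drY_J$, with equality possible only when (\ref{eqn.CRPCD}) is tight at the subset involved. If any partition has strict inequality at either endpoint, the gap $\rY_J - \rY_1$ still contracts, so by induction on the number of tight constraints only ``balanced'' partitions at which every group moves at exactly $\mu^\star$ can persist; at such a partition $\drY_1 = \cdots = \drY_J = \mu^\star$, which is complete weak resource pooling. This case analysis---in particular verifying that the balanced partitions are genuinely dynamically stable once reached---is the step I expect to be the main obstacle.

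For (iii), argue by contradiction. Fix $C^*\subsetneq\C$ strictly violating (\ref{eqn.CRPCD}) and set $S^* = \S(C^*)$, so that $C^*\subseteq \U(S^*)$. Take $\lambda$ large enough that $\rY_J(\tau) < \lambda\tau$ for large $\tau$, and suppose $\rY_1(\tau)=\cdots=\rY_J(\tau)<\lambda\tau$ on an interval $[t,t+\Delta]$. Corollary \ref{thm.jointcustdeprates} with the single-block partition forces $\drY_{\mbox{Common}}(\tau) = \mu^\star$. On the other hand, for $c_i\in\U(S^*)$ we have $\S(c_i)\subseteq S^*$, so summing (\ref{eqn.gengetherfluidB}) over $c_i\in\U(S^*)$ and invoking (\ref{eqn.busyrate}) gives
\[
\mu^\star \sum_{c_i\in\U(S^*)}\alpha_{c_i}m_{c_i} \;\le\; \sum_{s_j\in S^*}\sum_{c_i\in\C(s_j)}\drT_{s_j,c_i}(\tau) \;\le\; |S^*|.
\]
Rearranging and using $C^*\subseteq\U(S^*)$ yields $|S^*|/J \ge \sum_{c_i\in C^*}\alpha_{c_i}m_{c_i}/\sum_{c_i\in\C}\alpha_{c_i}m_{c_i}$, contradicting the strict violation.
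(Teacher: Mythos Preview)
Your proposal is correct and follows essentially the same approach as the paper's proof: compare the extreme groups of the partition to the common rate $\mu^\star$ via Corollary \ref{thm.jointcustdeprates}, then invoke Lemma \ref{thm.simple}. Your ``server-side dual'' of (\ref{eqn.CRPCD}) is exactly what the paper derives for $S_L$ through a mediant-type computation (writing $|\S|/\sum_{c_i\in\C}\alpha_{c_i}m_{c_i}$ as $(|\S\setminus S_L|+|S_L|)/(\cdots+\cdots)$ and applying (\ref{eqn.CRPCD}) to $C=\C\setminus\C(S_L)$); your formulation is arguably cleaner. For (ii) the paper simply refers back to the argument of Theorem \ref{thm.pooledfluiddynamics}(ii), so your flagged ``main obstacle'' is treated with the same level of detail there as in your sketch. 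Your part (iii) spells out in the CD setting the contradiction argument that the paper gives in full only for the SD case (Theorem \ref{thm.pooledfluiddynamics}(iii)); the paper itself disposes of (iii) here in one sentence.
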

\begin{proof}
If $\rY_1(\tau)=\cdots=\rY_J(\tau)$  for $s<\tau<t$ then by (\ref{eqn.jointserverfluidCD})
\begin{equation}
\label{eqn.CDspeed}
\drY_j(\tau) = \frac{|\S|}{\sum_{c_i\in \C} \alpha_{c_i} m_{c_i}}
\end{equation}

Assume that $\rY_J(t) > \rY_1(t)$ and assume partition $S_1(t),\ldots,S_L(t)$.
{We show that  if (\ref{eqn.CRPCD}) holds then there exists $\varepsilon>0$ such that $\drY_1(t)-\drY_J(t)\geq \varepsilon$, which by Lemma \ref{thm.simple} proves that complete resource pooling holds.

Indeed,   by Corollary \ref{thm.jointcustdeprates} and  (\ref{eqn.CRPCD}):}
\[
\drY_{S_1} =  \frac{|S_1|}{\sum_{c_i\in \U(S_1)} \alpha_{c_1} m_{c_i}} > \frac{|\S|}{\sum_{c_i\in \C} \alpha_{c_i} m_{c_i}},
\]
On the other hand:
\[
\frac{|\S|}{\sum_{c_i\in \C} \alpha_{c_1} m_{c_i}}  =
\frac{|\S\backslash S_L| + |S_L|}
{ \sum_{c_i\in \C \backslash \C(S_L)} \alpha_{c_i} m_{c_i}
+ \sum_{c_i\in \C(S_L)} \alpha_{c_i} m_{c_i}
}
\]
and $\C \backslash \C(S_L) = \C(\S\backslash S_L)$, and hence
$\frac{|\S\backslash S_L| }{ \sum_{c_i\in \C \backslash \C(S_L)} \alpha_{c_i} m_{c_i}} >
 \frac{|\S|}{\sum_{c_i\in \C} \alpha_{c_i} m_{c_i}}$ which implies that:
 \[
\drY_{S_L} =  \frac{|S_L|}{\sum_{c_i\in \C(S_L)} \alpha_{c_1} m_{c_i}} < \frac{|\S|}{\sum_{c_i\in \C} \alpha_{c_i} m_{c_i}},
\]

The proof of (ii) is similar to the proof of (ii) in Theorem \ref{thm.pooledfluiddynamics}.

If the condition (\ref{eqn.CRPCD})  is strictly violated then clearly it is not possible to have
$\rY_1(\tau)=\cdots=\rY_J(\tau)$  for $s<\tau<t$.  If it is only weakly violated, i.e., there exists $C,\S(C)$ such that $\frac{|\S(C)|}{|\S|} =
\frac{\sum_{c_i\in C} \alpha_{c_i} m_{c_i}}
{\sum_{c_i\in \C} \alpha_{c_i} m_{c_i}}$,  then if initially servers $\S(C)$ are behind all the others, they will never catch up with $\rY_J$.
\end{proof}

All the trajectories of the fluid limits for the CD case can be determined by the following Corollary, which mimics Theorem \ref{thm.pooleddynamics}, and has the same proof.
\begin{corollary}
\label{thm.pooleddynamics2}
Consider a fluid limit with $\rY_{k-1}(t)<\rY_k(t)=\cdots=\rY_l(t)<\rY_{l+1}(t) \le \lambda t$ for some $k, l$, and $t$, and let $\bar{\S}(t)=(S',\{M_k,\ldots,M_l\},S'')$ be the corresponding partition of the servers.  Then
\begin{equation}
\label{eqn.jointserverfluid3}
\drY_j(t) = \frac{ l - k +1 }
{ \sum_{c_i\in \C(M_k,\ldots,M_l)\backslash\C(S'')} \alpha_{c_i} m_{c_i} \Big/
\alpha_{\C(M_k,\ldots,M_l)\backslash\C(S'')}
}
\end{equation}
during $t<\tau<t+\Delta$ for some $\Delta>0$,  if and only if the subsystem consisting of
 $\{M_k,\ldots,M_l\}$ and $\C(M_k,\ldots,M_l)\backslash \C(S'')$ satisfies  condition {\rm(\ref{eqn.CRPCD})}.
\end{corollary}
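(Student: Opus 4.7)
The plan is to mimic the proof of Theorem \ref{thm.pooleddynamics} by isolating a CD subsystem at the prelimit and then invoking Theorem \ref{thm.pooledCD}. The formula \eqref{eqn.jointserverfluid3} and the CRP hypothesis in the corollary have been written so that they are exactly the common-speed formula \eqref{eqn.CDspeed} and the CRP condition \eqref{eqn.CRPCD} for that subsystem after renormalization, so most of the work is the \emph{isolation} step; the rest is direct quotation.

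First I would use continuity of the fluid limits $\rP_{s_j}(\cdot)$, established in Theorem \ref{thm.existence}, to extend the strict inequalities $\rY_{k-1}(t)<\rY_k(t)=\cdots=\rY_l(t)<\rY_{l+1}(t)\le\lambda t$ to a nontrivial interval $[t,t+\Delta]$ on which the ordered partition $(S',\{M_k,\ldots,M_l\},S'')$ remains constant. Passing back to the originating $\omega$ and fluid subsequence $r$, for all sufficiently large $r$ the corresponding strict prelimit inequalities $Y^r_{k-1}(r\tau)<Y^r_k(r\tau)\le\cdots\le Y^r_l(r\tau)<Y^r_{l+1}(r\tau)<A(r\tau)$ hold for every $\tau\in[t,t+\Delta]$. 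Hence the labels $M^r_k,\ldots,M^r_l$ coincide with $M_k,\ldots,M_l$ throughout this interval, these servers are busy throughout it, and by FCFS they can only work on customer types in $\C(M_k,\ldots,M_l)\backslash\C(S'')$: any arriving customer of a type compatible with $S''$ is intercepted by a server in $S''$ before it reaches the positions $Y^r_k,\ldots,Y^r_l$.

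Thus, on $(rt,rt+r\Delta)$, the servers $\{M_k,\ldots,M_l\}$ together with the customer types $\C(M_k,\ldots,M_l)\backslash\C(S'')$ evolve as an isolated FCFS-ALIS CD subsystem with renormalized arrival probabilities $\tilde\alpha_{c_i}=\alpha_{c_i}/\alpha_{\C(M_k,\ldots,M_l)\backslash\C(S'')}$ and service rates $\mu_{c_i}$. Applied to this subsystem, \eqref{eqn.CDspeed} delivers precisely the right-hand side of \eqref{eqn.jointserverfluid3} after reinstating the normalizer $\alpha_{\C(M_k,\ldots,M_l)\backslash\C(S'')}$, and the subsystem's version of \eqref{eqn.CRPCD} coincides with the condition stated in the corollary since that same normalizer cancels from the numerator and denominator of the defining ratio. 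Invoking parts (i) and (iii) of Theorem \ref{thm.pooledCD} for this subsystem then yields the desired biconditional: $\{M_k,\ldots,M_l\}$ move together at the asserted common rate on $[t,t+\Delta]$ if and only if the subsystem CRP condition holds.

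The main obstacle is the isolation step, i.e.\ verifying that during $(rt,rt+r\Delta)$ no server in $S''$ dips to or below position $Y^r_l$ and no server in $S'$ overtakes a member of $\{M_k,\ldots,M_l\}$, so that the middle block's dynamics genuinely decouple from the outer blocks; once decoupled, the ALIS tie-breaking rule operates internally to the block and causes no further complication. This reduces to the preservation of the strict prelimit inequalities $Y^r_{k-1}<Y^r_k$ and $Y^r_l<Y^r_{l+1}$ across the interval, which is automatic from u.o.c.\ fluid convergence together with the continuity established in Theorem \ref{thm.existence}, provided $r$ is taken large enough; once that is in hand the corollary is an immediate consequence of Theorem \ref{thm.pooledCD} applied to the isolated subsystem.
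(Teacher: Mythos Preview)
Your proposal is correct and follows essentially the same route as the paper, which explicitly says that Corollary~\ref{thm.pooleddynamics2} ``has the same proof'' as Theorem~\ref{thm.pooleddynamics}: use continuity of the fluid limits and u.o.c.\ convergence along the originating subsequence to isolate $\{M_k,\ldots,M_l\}$ with customer types $\C(M_k,\ldots,M_l)\backslash\C(S'')$ as a standalone FCFS--ALIS subsystem on $(rt,rt+r\Delta)$, and then invoke Theorem~\ref{thm.pooledCD} (the CD analogue of Theorem~\ref{thm.pooledfluiddynamics}) on that subsystem. Your observation that the renormalizer $\alpha_{\C(M_k,\ldots,M_l)\backslash\C(S'')}$ cancels in the subsystem's version of \eqref{eqn.CRPCD}, and your handling of the isolation step via the strict prelimit inequalities, are exactly the content the paper leaves implicit.
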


As in the SD case, we get a complete picture of the fluid model in the CD case.  Similar to the SD case however, the fluid model does not contain enough information to calculate the matching rates.

\section{Systems with computable matching rates}
\label{sec.computablematchingrates}
For some types of compatibility graphs it is possible to calculate the matching rates of the fluid model, and in those cases one can again show that the fluid levels are piecewise linear, and calculate their trajectories.  We consider two such special types of networks:  networks with complete bipartite compatibility graph and networks with tree compatibility graph, as well as their hybrid.   The fluid models for these systems under FCFS were considered by Talreja and Whitt \cite{talreja-whitt:07}, for the SD case.  In our derivations here we allow service rates to depend both on the server and on the customer type.

\subsection{Network with complete bipartite compatibility graph}
\label{sec.completegraph}
We now assume that every server can serve all types of customers, i.e., the compatibility graph is a complete bipartite graph.   If all the servers can serve all the customers, then servers will never skip customers, and in effect the system will just behave like a GI/GI/$J$ queueing system with non-identical servers.

When a server will complete service he will immediately overtake all the other servers and will start serving the first waiting customer.   Average service time for server $s_j$, service rate for server $s_j$, and  total service capacity of the system, are then:
\begin{equation}
\label{eqn.completegraphrates}
\mu = \sum_{j=1}^J \mu_{s_j},  \qquad \mu_{s_j} = {m_{s_j}}^{-1}, \qquad
m_{s_j} = \sum_{c_i\in \C(s_j)} \alpha_{c_i} m_{s_j,c_i},
\end{equation}
and we can calculate matching rates as follows:
\begin{equation}
\label{eqn.completegraphmatching}
r_{s_j,c_i} = \frac{\mu_{s_j}}{\mu} \frac{\alpha_{c_i}}{\sum_{c_k \in \C(s_j)} \alpha_{c_k}}.
\end{equation}
Using the same arguments as for GI/GI/$J$ we get:
\begin{theorem}
For the case of a complete bipartite compatibility graph, under FCFS-ALIS policy, there is complete resource pooling always, and for every fluid model almost surely
\[
\rY_1(t)=\cdots = \rY_J(t) = \min \big(\rY_J(0) + \mu t, \lambda t\big), \quad t>0,
\]
where $\mu$ is given in (\ref{eqn.completegraphrates}). The matching rates while $\rY_J(t)< \lambda t$  are given by
$r_{s_j,c_i}$ in (\ref{eqn.completegraphmatching}).
\end{theorem}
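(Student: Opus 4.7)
The proof rests on the observation that for a complete bipartite graph, $\U(S)=\emptyset$ for every proper subset $S\subsetneq\S$, since any server outside $S$ is compatible with every customer type. This has two consequences. First, via (\ref{eqn.queuedef1}) we get $Q_{c_i,j}(t)\equiv 0$ for all $j<J$ and every $c_i$, so in the stochastic system no customer is ever skipped and the $J$ servers effectively share a single waiting queue. Second, the fluid model rules out any non-trivial grouping: if a block $\{M_k,\ldots,M_l\}$ with $l<J$ were sustained over an interval $(s,t)$, then $\C(M_j)\setminus\C(M_{l+1},\ldots,M_J)=\C\setminus\C=\emptyset$, and equation (\ref{eqn.busyrate}) of Theorem~\ref{thm.generaljointserversdynamics} would yield the impossible $0=1$. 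Combined with the initial-remainder assumption $V^n_{s_j,c_i}(0)/n\to 0$, which collapses any initial positional lag in zero fluid time, this forces $\rY_1(t)=\cdots=\rY_J(t)$ for all $t>0$ and establishes complete resource pooling.

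With all servers moving together I would then apply Theorem~\ref{thm.generaljointserversdynamics} with $k=1$, $l=J$. Equation (\ref{eqn.busyrate}) becomes $\sum_{c_i\in\C}\dot{\rT}_{s_j,c_i}(\tau)=1$, and (\ref{eqn.gengetherfluidB}) reads $\alpha_{c_i}\drY(\tau)=\sum_{j=1}^J\mu_{s_j,c_i}\dot{\rT}_{s_j,c_i}(\tau)$ for each $c_i\in\C$. What remains is to identify how each server's busy time splits across customer types, and here the complete graph is essential: since $s_j$ is handed the oldest waiting customer regardless of type, the indices of arrivals served by $s_j$ are determined by service-time primitives that are independent of the type sequence $\{\xi(\ell)\}$, and a pathwise SLLN on this thinned sequence yields, in the fluid,
\begin{equation*}
\dot{\rT}_{s_j,c_i}(\tau)=\frac{\alpha_{c_i}\,m_{s_j,c_i}}{m_{s_j}},\qquad m_{s_j}=\sum_{c_k\in\C}\alpha_{c_k}m_{s_j,c_k},
\end{equation*}
consistent with (\ref{eqn.completegraphrates}). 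Substituting back gives $\drY(\tau)=\sum_{j=1}^J\mu_{s_j}=\mu$ while $\rY_J(\tau)<\lambda\tau$; the bound $\rY_J(t)\le\lambda t$ takes over once the queue drains, producing the piecewise-linear trajectory $\rY_j(t)=\min\bigl(\rY_J(0)+\mu t,\lambda t\bigr)$.

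The matching rates are then immediate: server $s_j$ contributes $\mu_{s_j}$ of the total completion rate $\mu$, and by the same sampling argument a fraction $\alpha_{c_i}$ of its completions are of type $c_i$; since $\C(s_j)=\C$ gives $\sum_{c_k\in\C(s_j)}\alpha_{c_k}=1$, this reproduces (\ref{eqn.completegraphmatching}). The one genuinely delicate step is the SLLN identification of $\dot{\rT}_{s_j,c_i}$: the fluid equations alone leave the allocation of each server's busy time across types underdetermined (this is precisely the phenomenon behind the Foss--Chernova example of Section~\ref{sec.stability}), and on a complete graph we really do need to exploit the independence of $\{\xi(\ell)\}$ from the service primitives together with the fact that FCFS-ALIS does not discriminate by type when every server can serve every customer. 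Once this is in hand, the remaining bookkeeping is routine.
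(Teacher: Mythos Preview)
Your proof is correct and in substance agrees with the paper's argument; the key observation in both is that with a complete bipartite graph no customer is ever skipped, so each server samples i.i.d.\ from the full type distribution and the system behaves as a GI/GI/$J$ queue with heterogeneous servers, giving the per-server rate $\mu_{s_j}$ and the matching rates $r_{s_j,c_i}=(\mu_{s_j}/\mu)\alpha_{c_i}$.

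Where you differ is in the mechanics of establishing complete resource pooling. The paper simply invokes the GI/GI/$J$ analogy stated just before the theorem and does not route through the fluid-model equations at all. You instead push the argument through Theorem~\ref{thm.generaljointserversdynamics}: the observation that $\U(S)=\emptyset$ for every proper $S$ makes the index set in (\ref{eqn.busyrate}) empty whenever $l<J$, yielding the contradiction $0=1$ and forcing the trivial partition. This is a nice internal-consistency use of the paper's own machinery, and it has the advantage of making explicit why the fluid equations alone \emph{cannot} pin down $\dot{\rT}_{s_j,c_i}$ in the pooled block (the system (\ref{eqn.busyrate})--(\ref{eqn.gengetherfluidB}) is underdetermined for a complete graph), so that the separate SLLN/independence argument is genuinely needed---a point the paper leaves implicit. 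The paper's route is shorter; yours is more self-contained within the fluid framework and highlights exactly where the extra probabilistic input enters.
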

\begin{proof}
Recall that $\rY_J(0) \le 0$ in our system description.  The matching rates
correspond to the fact that  server devotes a fraction $\alpha_{c_i}/\sum_{c_k\in \C(s_j)}\alpha_{c_k} $ of his services to type $c_i$.  The service rate of each server is given by (\ref{eqn.completegraphrates}) as long as there is a queue, and so his fraction of all services is  $\mu_{s_j}/\mu$, and (\ref{eqn.completegraphmatching}) follows.
\end{proof}

\subsection{Network with tree bipartite compatibility graph}
\label{sec.treegraph}
A tree graph is a connected graph with no loops.  With $K$ nodes it will have  exactly $K-1$ edges, and it will always have at least two leaves  (nodes that are connected by a single edge).   Furthermore, any sub-graph will be a tree, or a union of disconnected trees (a forest).
We now assume that the bipartite graph $\G$ is a tree.  It has $I+J$ nodes and therefore it has $I+J-1$ compatible pairs (edges), and at least two leaves, each of which can be either  a server or customer type.

Let $S_1,\ldots,S_L$ be an ordered partition of $s_1,\ldots,s_J$.  Denote by $C_\ell= \C(S_\ell)\backslash \C(S_{\ell+1}\cup\cdots\cup S_L)$.
Consider now a fluid  limit $\rY_1(\tau), \ldots, \rY_J(\tau)$, with permutation $\rM_1(\tau),\ldots,\rM_J(\tau)$, and assume that for all $s<\tau<t$ the following holds:
\begin{eqnarray}
&&     \rM_j(\tau),\rM_{j'}(\tau) \in  S_\ell \Longrightarrow  \rP_{M_j}(\tau)=  \rP_{M_{j'}}(\tau),
\nonumber \\
&& \rM_j(\tau) \in  S_\ell \ \mbox{ and } \rM_{j'}(\tau) \in S_{\ell+1}
 \Longrightarrow  \rP_{M_j}(\tau) < \rP_{M_{j'}}(\tau) \mbox{ or } \rP_{M_j}(\tau) = \rP_{M_{j'}}(\tau)
\label{eqn.partition}  \\
&&  \qquad  \mbox{ and the subgraphs of } (S_\ell,C_\ell),\;
(S_{\ell+1},C_{\ell+1})   \mbox{ are not connected}
\nonumber
\end{eqnarray}
We denote the common value of $\rP_{M_j}(\tau)$ for $M_j \in S_\ell$ by $\rY_{S_\ell}(\tau)$.
Clearly, by the continuity of the $\rY_j(\cdot)$, such a partition is defined for every $t$ and for some $s < t$.
This partition is a refinement of the partitions discussed in Sections \ref{sec.fluidlimits}, \ref{sec.serverdependent}, where we further divide subset of servers that move together, so that each such subset will have the  property that each $(S_\ell,C_\ell)$ is connected.

\begin{theorem}
\label{thm.tree}
Assume that the bipartite compatibility graph is a tree, and consider the partition $(S_1,\ldots,S_L)$ as in {\rm(\ref{eqn.partition})} valid for $s<\tau<t$.
Then:

{\rm(i)}  Equations  {\rm(\ref{eqn.busyrate})-(\ref{eqn.gengetherfluidB})}  have a unique solution, for every $S_\ell\in (S_1,\ldots,S_L)$, and hence $\drT_{M_j,c_i}(\tau), \drY_j(\tau)$ are constant for $s<\tau<t$.   As a result, almost surely,  the fluid limit has unique continuous piecewise linear trajectories.

{\rm(ii)}  Consider the set of equations
 \begin{eqnarray}
&&   \sum_{c_i\in\C(s_j)}   \eta_{s_j,c_i} = 1 \quad j=1,\ldots,J, \nonumber\\
&&   \sum_{s_j\in \S(c_i)} \frac{\mu_{s_j,c_i}}{\alpha_{c_i}}  \eta_{s_j,c_i}  =  \mu,    \quad i=1,\ldots,I
 \label{eqn.treepooling}
  \end{eqnarray}
with the $I+J-1$ unknowns $\eta_{s_j,c_i},\ (s_j, c_i) \in \G$, and an additional unknown $\mu$.  The system will have complete resource pooling if and only if
{\rm(\ref{eqn.treepooling})}  has a positive solution, and it will have complete weak resource pooling if the solution is non-negative.

{\rm(iii)} If complete resource pooling holds then $\mu$ is the pooled service rate, and the matching rates are given by:
\begin{equation}
\label{eqn.treematching}
r_{s_j,c_i} = \frac{\mu_{s_j,c_i}  \eta_{s_j,c_i}}{\mu}.
\end{equation}
\end{theorem}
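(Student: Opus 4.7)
The tree hypothesis makes the linear system of fluid model equations square with a leaf-recursive solution, which gives (i); then specializing to total pooling ($L=1$) yields (ii) and (iii).

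For (i), I would fix a block $S_\ell$ together with its associated customer set $C_\ell := \C(S_\ell)\setminus\C(S_{\ell+1}\cup\cdots\cup S_L)$, and consider the bipartite graph induced on $S_\ell\cup C_\ell$. Being a subgraph of $\G$, it is a forest; I would handle one connected component at a time. On a connected subtree with $|S_\ell|+|C_\ell|$ vertices there are exactly $|S_\ell|+|C_\ell|-1$ edges, so the unknowns $\{\drT_{M_j,c_i}\}$ (one per edge in the component) together with the common speed $\drY_{S_\ell}$ total $|S_\ell|+|C_\ell|$, matching the $|S_\ell|$ equations from (\ref{eqn.busyrate}) plus $|C_\ell|$ equations from (\ref{eqn.gengetherfluidB}). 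I would then solve by leaf-stripping: a server leaf $s_j$ incident to a unique $c_i$ forces $\drT_{s_j,c_i}=1$ via (\ref{eqn.busyrate}); a customer-type leaf $c_i$ incident to a unique $s_j$ forces $\drT_{s_j,c_i}=\alpha_{c_i}\drY_{S_\ell}/\mu_{s_j,c_i}$ via (\ref{eqn.gengetherfluidB}). Remove the leaf with its edge and iterate on the smaller tree (one fewer unknown, one fewer equation) until only the global equation determining $\drY_{S_\ell}$ survives. Uniqueness and constancy of the derivatives on $(s,t)$, combined with the finiteness of the set of possible partitions, force piecewise linear trajectories almost surely.

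For (ii), complete resource pooling corresponds to the trivial partition $L=1$, $S_1=\S$, with $\C(S'')=\emptyset$. Under the identifications $\eta_{s_j,c_i}=\drT_{s_j,c_i}$ and $\mu=\drY_{S_1}$, equations (\ref{eqn.busyrate})-(\ref{eqn.gengetherfluidB}) become exactly (\ref{eqn.treepooling}) with the $I+J-1$ unknowns indexed by the edges of $\G$ and the additional unknown $\mu$. The unique algebraic solution from (i) is physically admissible iff all $\eta_{s_j,c_i}\ge 0$, since they must represent fractions of busy time. If the solution is strictly positive, I would establish CRP by a Lyapunov argument modeled on Theorem \ref{thm.pooledfluiddynamics}(i): for every refined partition with $L>1$, applying part (i) block-by-block yields a leading-block speed strictly above $\mu$ and a trailing-block speed strictly below $\mu$, so that $\rY_J-\rY_1$ decreases at a rate bounded away from zero until the pooled configuration is reached, using Lemma \ref{thm.simple}. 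Weak positivity propagates by the same chain of comparisons to yield complete weak resource pooling.

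For (iii), once CRP is established, every server is busy always and devotes a time fraction $\eta_{s_j,c_i}$ to type $c_i$, so the rate of $(s_j,c_i)$ completions is $\mu_{s_j,c_i}\eta_{s_j,c_i}$; the system's total completion rate is $\drY_{S_1}=\mu$, which yields the matching rate formula (\ref{eqn.treematching}). The main obstacle is the speed-comparison step in (ii): because the leaf-recursion defines each $\drY_{S_\ell}$ only implicitly through the subtree topology, showing that leading blocks must exceed $\mu$ and trailing blocks fall below it requires an inductive argument on the leaves of each induced subtree---unlike the SD case of Section \ref{sec.serverdependent}, there is no closed-form speed formula to compare directly, and the strict positivity of the global solution must be propagated through the algebra of arbitrary sub-partitions.
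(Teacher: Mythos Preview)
Your treatment of (i) and (iii) is essentially the paper's: count edges versus vertices on the connected subtree $(S_\ell,C_\ell)$, solve by leaf-stripping, and interpret $\eta_{s_j,c_i}$ as busy-time fractions once pooling holds. The substantive content is (ii), and there you correctly identify both the Lyapunov structure (show $\drY_{S_1}>\mu>\drY_{S_L}$ whenever $L>1$, then invoke Lemma~\ref{thm.simple}) and the obstacle (no closed-form speed to compare). But your proposed fix---an inductive argument on the leaves of each induced subtree---is left unspecified, and it is not clear how strict positivity of the global $\eta^{(0)}$ propagates through the leaf recursion to force the needed strict inequalities for \emph{every} possible sub-partition.

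The paper closes this gap by a different device: it recasts the fluid equations on each subtree $(S_\ell,C_\ell)$ as a linear program
\[
\max\ \mu \quad \text{s.t.}\quad \sum_{c_i\in C_\ell}\eta_{M_j,c_i}\le 1,\qquad \sum_{M_j\in S_\ell}\mu_{M_j,c_i}\eta_{M_j,c_i}\ge \mu\,\alpha_{c_i},\qquad \eta\ge 0,
\]
and observes (with justification deferred to Section~\ref{sec.optimal}) that the unique solution of (\ref{eqn.busyrate})--(\ref{eqn.gengetherfluidB}) on the subtree is precisely the LP optimum. The speed comparison then becomes two one-line feasibility arguments. For the leading block $S_1$: since $\S(C_1)=S_1$, the restriction of the global optimum $(\mu^{(0)},\eta^{(0)})$ to $(S_1,C_1)$ satisfies all customer constraints with equality; but because $\G$ is connected, some server $M_j\in S_1$ has $\eta^{(0)}_{M_j,c_i}>0$ for a type $c_i\notin C_1$, so that server's constraint is slack. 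Thus $(\mu^{(0)},\eta^{(0)})$ is feasible but not optimal for the $S_1$-LP, whence $\drY_{S_1}=\mu^{(S_1)}>\mu^{(0)}$. For the trailing block $S_L$: since $C_L=\C(S_L)$, the $S_L$-LP is the full LP with the extra constraints $\eta_{M_j,c_i}=0$ for $M_j\notin S_L$, which the strictly positive global optimum violates; hence $\drY_{S_L}=\mu^{(S_L)}<\mu^{(0)}$. This LP monotonicity argument sidesteps the subtree algebra entirely and is the missing idea in your proposal.
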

\begin{proof}
(i)    The equations  (\ref{eqn.busyrate})-(\ref{eqn.gengetherfluidB}) for each $S_\ell$ are:
\[
 \sum_{c_i\in C_\ell}   \dot\rT_{M_j,c_i}(\tau) = 1, \quad M_j \in S_\ell, \\
\]
\[
 \drY_{S_\ell}  =   \sum_{s_j\in S_\ell} \frac{\mu_{M_j,c_i}}{\alpha_{c_i}}  \dot\rT_{M_j,c_i}(\tau),
 \qquad c_i \in C_\ell.
\]
with unknowns $\drY_{S_\ell}$ and $\dot\rT_{M_j,c_i}(\tau)$ for each edge in the subgraph $(S_\ell,C_\ell)$.
Since the subgraph is a connected tree, the number of  unknowns
 is equal to the number of equations.  The equations are independent, so the solution is unique, for all $t < \tau< s$.   The solution must be non-negative, because the fluid limits exist.
 This proves that the fluid limit $\rY_{S_\ell}$ moves along a linear trajectory in the interval $(s,t)$.

We note that the equations can be solved in $|S_\ell|+|C_\ell | $ steps:  Locate a leaf in the graph. If it is $M_j$, it has a single customer type $c_i = \C(M_j) \cap C_\ell$, and $\dot\rT_{M_j,c_i}(\tau)=1$.  If it is $c_i$ it has a single server $M_j = \S(c_i)\cap S_\ell$, and then $\dot\rT_{M_j,c_i} = \drY_{S_\ell} \alpha_{c_i} m_{M_j,c_i}$.  In either case one can eliminate the leaf node and one equation and continue to solve for the remaining graph.  Note that deleting a leaf from a tree leaves a connected tree.

(ii)  Clearly if there is no positive solution to (\ref{eqn.treepooling}) then there can be no complete resource pooling (i.e., it is impossible to have $\rY_1(\tau)=\cdots =\rY_J(\tau)$ for any interval of $\tau$'s).  If the solution is non-negative with some 0 values for some edges, this implies that some disconnected subtrees   move at the same rate, but may have different initial positions.  So the system has complete weak resource pooling.
Assume that (\ref{eqn.treepooling}) has a positive solution.   We need to show that for some $t_0$ all the trajectories $\rY_j(t)$ meet for $t > t_0$.  Assume that at time $t$,  $\rY_1(t) < \rY_J(t)$.  We will show that $\drY_J(t) - \drY_1(t) < 0$, which by Lemma \ref{thm.simple} will complete the proof.

By continuity we have for an interval $t-\delta<\tau<t+\delta$ in which  the partition is $S=(S_1,\ldots,S_L)$, where  $S_1=(M_1,\ldots,M_k)$ and $S_L=(M_l,\ldots,M_J)$ and $\rY_{S_1}(\tau)<\rY_{S_L}(\tau)$.  We will show that $\drY_{S_1}(\tau) > \drY_{S_L}(\tau)$.

Denote by $\mu^{(S_1)},\eta_{M_j,c_i}^{(S_1)}$ and $\mu^{(S_L)},\eta_{M_j,c_i}^{(S_L)}$ the non-negative solutions of  (\ref{eqn.busyrate})-(\ref{eqn.gengetherfluidB}) for $S_1$ and for $S_L$, and by $\mu^{(0)},\eta_{s_j,c_i}^{(0)}$ the positive solution of (\ref{eqn.treepooling}).

We note that the solution of (\ref{eqn.busyrate})-(\ref{eqn.gengetherfluidB}) for the tree graphs  $(S_\ell,C_\ell),\,\ell=1,\ldots,L$, as well as for the complete tree graph $(\S,\C)$ are in fact the unique optimal solutions of the  corresponding linear programs (LP):
{
 \begin{eqnarray*}
 \label{eqn.subgraphLP}
&&\max \ \mu  \nonumber \\
&& s.t. \left\{
\begin{array}{lll}
   \sum_{c_i\in C_\ell}   \eta_{M_j,c_i} \le 1, &M_j\in S_\ell, \\
\sum_{M_j\in S_\ell}  \mu_{M_j,c_i}  \eta_{M_j,c_i}  \ge \mu \alpha_{c_i},
& c_i\in C_\ell,  \\
  \qquad \eta_{M_j,c_i} \ge 0. &  \
  \end{array}
  \right.
  \end{eqnarray*}
}
The fact that they are unique optimal solutions is explained in the following Section \ref{sec.optimal}.

Consider then the LP (\ref{eqn.subgraphLP}) for $(S_1,C_1)$, and substitute the values of $\mu^{(0)},\eta_{M_j,c_i}^{(0)}$.  We then have that:
\[
\sum_{M_j\in \S_1} \mu_{M_j,c_i}  \eta_{M_j,c_i}^{(0)}  = \mu^{(0)} \alpha_{c_i},
\qquad c_i\in C_1,
\]
because $\S(C_1) = S_1$, since $C_1$ includes customers that were skipped by all the other servers.   At the same time:
\[
\sum_{c_i\in C_\ell)}   \eta_{M_j,c_i}^{(0)} <1, \qquad \mbox{for at least one } M_j\in S_1,
\]
because  the  graph of $(\S,\C)$ is connected, and therefore there exists a link from some server  $M_j\in S_1$ to a customer type $c_i \not\in C_1$, and by assumption $\eta_{M_j,c_i}^{(0)} > 0$.
Hence this is a feasible but not optimal solution, which proves that $\mu^{(S_1)}>\mu^{(0)}$.

On the other hand, consider the LP (\ref{eqn.subgraphLP}) for $(\S_L,\C_L)$.  Because $C_L=\C\cap \C(S_L)$, it has all  the  constraints as the LP for $(\S,\C)$, with the additional constraints that $\eta_{M_j,c_i}=0$ whenever $M_j \not\in S_L$.  Hence the LP for $(S_L,C_L)$ is more constrained than that for $\S,\C$, and further more, in the optimal solution of $\S,\C$ all the $\eta_{M_j,c_i}^{(0)}>0$.  This implies that $\mu^{(S_L)} < \mu^{(0)}$.

But, $\mu^{(S_1)} = \drY_{S_1}=\drY_1,\;\mu^{(S_L)} = \drY_{S_L}=\drY_J$, and we have shown that if $\rY_1(t)< \rY_J(t)$ then $\drY_1(t)-\drY_J(t)>0$, as required.

(iii)  In the optimal solution of (\ref{eqn.treepooling}) the values of $\eta_{s_j,c_i}$ are the fractions of time allocated by server $s_j$ to customers of type $c_i$, and therefore the rate at which customers of type $c_i$ are processed by server $s_j$ is $\mu_{s_j,c_i} \eta_{s_j,c_i}$.
The total processing rate is then the sum of all these $\mu = \sum_{s_j,c_i\in \G} \mu_{s_j,c_i} \eta_{s_j,c_i}$, which is indeed the solution of (\ref{eqn.treepooling}).   The matching rates are therefore given by (\ref{eqn.treematching}).
\end{proof}

 {\begin{remark}
 A system is a hybrid of the systems  studied in Sections {\rm\ref{sec.completegraph}-\ref{sec.treegraph}}, if its bipartite compatibility graph consists of several complete graphs which are connected by a tree graph.
 For these hybrid systems one can again calculate the matching rates, and obtain a complete description of the fluid model trajectories.
\end{remark}
}


\section{Maximal throughput under FCFS}
\label{sec.optimal}
We consider a static planning problem similar to  Harrison and Lopez \cite{harrison-lopez:99}:
\begin{eqnarray}
&&\max \mu \nonumber \\
&&s.t. \left\{
\begin{array}{lll}
\sum_{c_i\in \C(s_j)} \eta_{s_j,c_i} \le 1, & s_j\in \S,  \\
 \sum_{s_j\in \S(c_i)} \mu_{s_j,c_i}  \eta_{s_j,c_i}  \ge  \alpha_{c_i} \mu,  & c_i \in \C,\\
 \eta_{s_j,c_i} \ge 0, & (s_j,c_i)\in\G
 \end{array}
 \right. \label{eqn.LP}
\end{eqnarray}
with the decision variables $\eta_{s_j,c_i}$, $(s_j,c_i)\in\G$ and $\mu$.
  Here $\eta_{s_j,c_i}$ is the fraction of time that server $s_j$ allocates to customers of type $c_i$, and $\mu$ is the rate at which the total stream of arrivals is served.
The first $J$ constraints (the server constraints)  say that the sum of  allocations for each server cannot exceed 1.
The next $I$ constraints (the customer constraints) say that the allocations $\eta_{s_j,c_i}$, are sufficient to serve the fraction customers of type $c_i$, to keep up with the total service rate $\mu$.
In terms of our system, $\eta_{s_j,c_i}$ can be thought of as long term average of $\drT_{s_j,c_i}$, and $\mu$ can be thought of as long term average of $\drY_1$, the rate of progress of $\rY_1$.

 The following Theorem is a simple consequence of Theorem 1 in the paper of Dai and Lin \cite{dai-lin:05}
 \begin{theorem}[Dai-Lin \cite{dai-lin:05}]
 \label{thm.maxthroughput}
 Let $\mu^*$ be the optimal value of the LP {\rm(\ref{eqn.LP})}.  Then under any policy, the fluid model is unstable if $\lambda>\mu^*$, so any policy that achieves fluid stability for all $\lambda<\mu^*$ is throughput optimal.
 \end{theorem}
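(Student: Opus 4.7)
The plan is to use linear programming duality on (\ref{eqn.LP}) to produce a workload certificate, then verify that under any policy this workload grows linearly in the fluid limit whenever $\lambda>\mu^*$, so the fluid model cannot be stable. This is the Dai--Lin duality argument specialized to our compatibility graph.

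First I would form the dual of (\ref{eqn.LP}). With non-negative multipliers $y_j$ on the server constraints and $z_i$ on the customer constraints, a routine Lagrangian calculation yields
\begin{equation*}
\min\ \sum_{j=1}^J y_j\quad\text{s.t.}\quad \mu_{s_j,c_i}\,z_i\le y_j\text{ for }(s_j,c_i)\in\G,\ \sum_{i=1}^I\alpha_{c_i}z_i\ge 1,\ y,z\ge 0.
\end{equation*}
Strong duality gives an optimal pair $(y^*,z^*)$ with $\sum_j y_j^*=\mu^*$, and complementary slackness (or rescaling) lets us take $\sum_i\alpha_{c_i}z_i^*=1$.

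Next, for any (possibly non-work-conserving) policy the cumulative allocations satisfy the policy-free bound $\sum_{c_i\in\C(s_j)}T_{s_j,c_i}(t)\le t$, and the total type-$c_i$ queue obeys the conservation identity
\begin{equation*}
Q_{c_i}(t)=Q_{c_i}(0)+A_{c_i}(t)-\sum_{s_j\in\S(c_i)}X_{s_j,c_i}\bigl(T_{s_j,c_i}(t)\bigr).
\end{equation*}
The Arzel\`a--Ascoli step of Theorem~\ref{thm.existence}, which relies only on Lipschitz/monotonicity of $T_{s_j,c_i}$ and the SLLN for the primitives, extends to arbitrary policies and produces fluid limits $\rT_{s_j,c_i}$ and $\rQ_{c_i}$ that are Lipschitz and satisfy at every regular~$t$,
\begin{equation*}
\sum_{c_i\in\C(s_j)}\drT_{s_j,c_i}(t)\le 1,\qquad \dot{\rQ}_{c_i}(t)=\lambda\alpha_{c_i}-\sum_{s_j\in\S(c_i)}\mu_{s_j,c_i}\drT_{s_j,c_i}(t).
\end{equation*}

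Now define the fluid workload $\bar W(t)=\sum_i z_i^*\rQ_{c_i}(t)$. Combining the two fluid relations with the dual inequalities $\mu_{s_j,c_i}z_i^*\le y_j^*$ and the normalization $\sum_i\alpha_{c_i}z_i^*=1$ gives
\begin{equation*}
\dot{\bar W}(t)=\lambda-\!\!\sum_{(s_j,c_i)\in\G}\!\!z_i^*\mu_{s_j,c_i}\drT_{s_j,c_i}(t)\ \ge\ \lambda-\sum_{j=1}^J y_j^*\!\!\sum_{c_i\in\C(s_j)}\!\drT_{s_j,c_i}(t)\ \ge\ \lambda-\mu^*.
\end{equation*}
Integrating, $\bar W(t)\ge\bar W(0)+(\lambda-\mu^*)t$, so whenever $\lambda>\mu^*$ the weighted fluid queue diverges linearly under every policy and the fluid model is unstable. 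The main obstacle is purely bookkeeping: one must check that the existence argument of Theorem~\ref{thm.existence} can be rerun using only the policy-free quantities $T_{s_j,c_i}$ and $Q_{c_i}$, without invoking the FCFS-specific server-position dynamics; once that is in hand, LP duality supplies the rest.
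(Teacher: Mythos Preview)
Your argument is correct, but it takes a different route from the paper's own proof. The paper argues directly on the primal side: if the fluid model is stable under some policy, then at regular times the fluid departure rate of each type must at least match the fluid arrival rate, i.e.\ $\sum_{s_j\in\S(c_i)}\mu_{s_j,c_i}\drT_{s_j,c_i}(t)\ge\lambda\alpha_{c_i}$, while the server constraints $\sum_{c_i\in\C(s_j)}\drT_{s_j,c_i}(t)\le 1$ always hold. Thus the pair $(\drT_{s_j,c_i}(t),\lambda)$ is itself feasible for the LP~(\ref{eqn.LP}), forcing $\lambda\le\mu^*$. No dual variables, no workload function.

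Your duality-and-workload approach is the standard Dai--Lin certificate and is perfectly sound; it buys you something the paper's argument does not, namely an explicit linear lower bound $\bar W(t)\ge\bar W(0)+(\lambda-\mu^*)t$ on a concrete weighted queue, valid for every fluid limit and every policy. This is more informative if one later wants growth rates or diffusion-scale statements. The paper's primal-feasibility argument, by contrast, is shorter and avoids invoking strong duality or constructing $(y^*,z^*)$ at all. Your final caveat about rerunning the fluid-limit existence argument for policy-free quantities is well placed; the paper simply asserts the existence of the relevant fluid limits $\drT_{s_j,c_i}$ under any policy without re-deriving them.
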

\begin{proof}
Consider the departure processes of customers of type $c_i$.  Denote its fluid limits by $\rD_{c_i}(t)$, and let the fluid allocation rates be $ \drT_{s_j,c_i}(t)$.  Under any policy,  $\sum_{c_i\in \C(s_j)} \drT_{s_j,c_i}(t) \le 1$ needs to hold for all $t>0$ for all servers.  Also, for any fluid limit, under any policy $\drD_{c_i}(t) = \sum \mu_{s_j,c_i} \drT_{s_j,c_i}(t)$.
 It follows that the fluid model can only be stable if for every $c_i$,   $\drD_{c_i}(t) = \sum \mu_{s_j,c_i} \drT_{s_j,c_i}(t) \ge \lambda \alpha_{c_i}$.  Hence $\mu^*$ is an upper bound on $\lambda$ for which the fluid model can be stable.
   \end{proof}
 We note that  (\ref{eqn.LP}) is an optimization problem for a network with gains (cf. \cite{ahuja-magnanti-orlin:93}).  We now proceed to discuss the solution of  the problem (\ref{eqn.LP}) through a number of observations.
\begin{itemize}
\item[(i)]
The problem is feasible, since 0 for all decision variables is a solution.
\item[(ii)]
The problem is bounded, since $\mu$ is bounded by a positive linear combination of the $\eta_{s_j,c_i}$, and each $\eta_{s_j,c_i} \le 1$.
\item[(iii)]
The optimal value is $\mu^*>0$, since the problem is feasible if we take $\eta_{s_j,c_i}=\frac{1}{I\, J}$.
\item[(iv)]
The server constraints are satisfied as equalities in the optimal solution, since $\mu$ can only increase with every $\eta_{s_j,c_i}$.
\end{itemize}
We rewrite the LP and its dual, DP, in a slightly different form, including slack variables:
\begin{eqnarray*}
 {\rm LP} && \left\{
\begin{array}{ll}
&\max  \mu  \\
&  s.t.\left\{
\begin{array}{lll}
\sum_{c_i\in \C(s_j)} \eta_{s_j,c_i}= 1, & s_j\in \S,  \\
\mu - \sum_{s_j\in \S(c_i)} \frac{\mu_{s_j,c_i}}{\alpha_{c_i}} \eta_{s_j,c_i} + \theta_{c_i} =0, & c_i \in \C, \\
\eta_{s_j,c_i},\theta_{c_i}  \ge 0, & (s_j,c_i) \in {\cal G}.
\end{array}
\right.
\end{array}
\right. \\
{\rm DP} &&\left\{
 \begin{array}{ll}
&\min  \sum_{s_j\in \S}  y_{s_j}   \nonumber \\
&s.t.\left\{
\begin{array}{lll}
  \sum_{c_i\in \C}  z_{c_i} = 1, & \\
y_{s_j}  - \frac{\mu_{s_j,c_i}} {\alpha_{c_i}} z_{c_i} - x_{s_j,c_i} =  0, & (s_j,c_i) \in {\cal G}, \\
z_{c_i}, \;\,  x_{s_j,c_i}  \ge 0, & (s_j,c_i) \in {\cal G}.
\end{array}
\right.
\end{array}
\right.
\end{eqnarray*}
We observe that:
\begin{itemize}
\item[(v)]
In the optimal solution there is at least one  $\eta_{s_j,c_i}>0$ for each server $s_j$, and at least one  $\eta_{s_j,c_i}>0$ for each customer type $c_i$, since the server constraints are satisfied as equalities, and in the customer constraints $\mu>0$.
\item[(vi)]
Every basic optimal solution has no less than $\min\{I,J\}$ and no more than $I+J-1$ positive $\eta_{s_j,c_i}$, by (v) and since there are $I+J$ constraints and $\mu>0$.
\item[(vii)]
Since the primal is feasible and bounded, both the primal and the dual possess optimal solutions.
\item[(viii)]
In an optimal solution $y_{s_j} \ge 0$, since it needs to be $\ge$ than non-negative quantities.
\end{itemize}

The most important property of the solutions is the following results, which must be known and hidden in the literature on network flows with gains, but we could not find a good explicit reference and we provide a proof here.
\begin{lemma}
\label{thm.basictree}
The positive arcs in a basic solution of the LP {\rm(\ref{eqn.LP})} cannot contain a cycle.
\end{lemma}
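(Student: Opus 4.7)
The plan is to derive a contradiction by producing a nontrivial linear combination of basic-variable columns that sums to the zero vector, which would contradict the non-singularity of the basis matrix. Suppose the positive arcs in a basic solution contain a cycle of length $2k$ in the bipartite graph $\G$. Label the cycle $s_{j_1}{-}c_{i_1}{-}s_{j_2}{-}c_{i_2}{-}\cdots{-}s_{j_k}{-}c_{i_k}{-}s_{j_1}$, and write its edges as $e_p=(s_{j_p},c_{i_p})$ and $f_p=(s_{j_{p+1}},c_{i_p})$, indices modulo $k$. All $2k$ variables $\eta_{e_p},\eta_{f_p}$ are positive and therefore basic. Recall the column supports in the constraint matrix: the column of $\eta_{s,c}$ carries a $+1$ in the server-$s$ row and $-\mu_{s,c}/\alpha_c$ in the customer-$c$ row; the column of $\mu$ carries a $+1$ in every customer row; and the column of $\theta_c$ carries a single $+1$ in row $c$.

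First I would build the candidate combination using only the cycle $\eta$-columns, with weights $\lambda_{e_p},\lambda_{f_p}$. At each cycle server only the two adjacent cycle edges contribute, so server-row cancellation forces the alternating pattern $\lambda_{f_{p-1}}=-\lambda_{e_p}$. The remaining obstruction sits in the cycle-customer rows, where $c_{i_p}$ receives $-(\mu_{e_p}\lambda_{e_p}-\mu_{f_p}\lambda_{e_{p+1}})/\alpha_{c_{i_p}}$. I would then enlarge the combination by a coefficient $\nu$ on the $\mu$-column and by coefficients on any basic cycle-customer slacks $\theta_{c_{i_p}}$. A key structural input is a counting step: a basis has exactly $I+J$ columns, the free variable $\mu$ occupies one, the $2k$ cycle $\eta$-columns occupy $2k$, and each of the $J-k$ non-cycle servers forces at least one basic $\eta$ because the equality $\sum_{c}\eta_{s,c}=1$ cannot hold with all $\eta_{s,\cdot}=0$. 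This leaves at most $I-k-1$ slots for basic $\theta$'s, so among the $I-k$ non-cycle customers at least one has non-basic $\theta$; in the row of that customer only the $\mu$-column contributes among our chosen columns, which pins $\nu=0$.

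With $\nu=0$ the cycle-customer rows reduce to $\mu_{e_p}\lambda_{e_p}=\mu_{f_p}\lambda_{e_{p+1}}$, which iterates around the cycle to $\lambda_{e_1}(1-\Pi)=0$ where $\Pi=\prod_{p=1}^{k}\mu_{e_p}/\mu_{f_p}$ is the gain product. If $\Pi=1$ the argument closes at once: choose $\lambda_{e_1}\ne 0$ and the cycle $\eta$-columns alone form a nontrivial zero combination, so the basis is singular. The main obstacle is the case $\Pi\ne 1$ combined with every cycle-customer $\theta$ being non-basic, where the direct construction only yields $\lambda_{e_1}=0$. I expect to close this case by activating a basic $\theta$ at some cycle customer to absorb the cyclic mismatch (restoring a free parameter in the $\lambda$'s), or, failing that, by rerouting the $\mu$-contributions in non-cycle customer rows through the basic non-cycle $\eta$-columns at the $J-k$ servers outside the cycle, obtaining an auxiliary dependence that, combined with the cycle alternation, forces a genuine nontrivial combination. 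Verifying this closure cleanly is the delicate heart of the argument.
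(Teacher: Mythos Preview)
Your argument has a real gap in the case $\Pi\ne 1$. The workarounds you sketch---activating a basic cycle-customer slack $\theta$ to absorb the mismatch, or rerouting through non-cycle $\eta$-columns---are not carried out, and it is not clear either can be made to work. Your counting step is also fragile: it presupposes that $\mu$ is basic and that there is at least one non-cycle customer, and even when it goes through it only pins $\nu=0$, which under $\Pi\ne 1$ forces $\lambda_{e_1}=0$ and leaves you with the trivial combination.

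The paper sidesteps this difficulty entirely by passing to the dual. For each positive (hence basic) cycle arc $(s_j,c_i)$, complementary slackness makes the dual slack $x_{s_j,c_i}$ vanish, giving $y_{s_j}=a_{j,i}\,z_{c_i}$ with $a_{j,i}=\mu_{s_j,c_i}/\alpha_{c_i}$. Chaining these equalities once around the cycle forces the two alternating products of the $a_{j,i}$ to coincide, i.e., $\Pi=1$. With $\Pi=1$ in hand, the $2k\times 2k$ submatrix formed by the cycle $\eta$-columns and the cycle server/customer rows has determinant $a_{1,1}a_{2,2}\cdots a_{k,k}-a_{1,2}a_{2,3}\cdots a_{k,1}=0$, so those columns are linearly dependent and cannot all sit in a basis. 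Your alternating-weight dependence for $\Pi=1$ is exactly this last step; what you are missing is the short dual argument that rules out $\Pi\ne 1$ and renders your entire counting apparatus unnecessary.
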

\begin{proof}
Assume that a basic solution of LP (\ref{eqn.LP})  contains the columns of a cycle of arcs  of $\G$, which for simplicity we assume are labeled as $(s_1,c_1),(s_1,c_2),(s_2,c_2),(s_2,c_3),\ldots,(s_{L-1},c_L),(s_L,c_L),(s_L,c_1)$.    We will get a contradiction.
Denote $a_{j,i}=\frac{\mu_{s_j,c_i}} {\alpha_{c_i}}$, for these arcs.
Consider the complementary slack dual solution.  It will have $x_{s_i,c_i}=0$ for all the $2L$ arcs in the cycle.
That implies that $y_{s_j} = a_{j,i} z_{c_i}$ for all these arcs.  This implies that
$\frac{ a_{1,1 } a_{2,2} \cdots a_{L,L} } { a_{1,2 } a_{2,2} \cdots a_{L,1} } = 1$.  Consider now the square matrix formed by $2L$ columns corresponding to these arcs, and the $2L$ non-zero rows of these columns.  Its determinant is $a_{1,1 } a_{2,2} \cdots a_{L,L} - a_{1,2 } a_{2,2} \cdots a_{L,1} =0$.  Hence this cannot be a basis.
\end{proof}
In an optimal solution of the LP  (\ref{eqn.LP}) we refer to the arcs with positive values of $\eta_{s_j,c_i}$ as the solution graph.

\begin{theorem}
\label{thm.optimal}
Consider an optimal basic solution of the LP {\rm(\ref{eqn.LP})}.

{\rm(i)} Assume all the slacks $\theta_{c_i}=0$, the optimal solution graph is a tree, and all the $I+J-1$ basic variables $\eta_{s_j,c_i}>0$.  Then  erasing all the non-basic arcs and using FCFS for the remaining graph will achieve complete resource pooling and be throughput optimal with processing rate $\mu^*$.

{\rm(ii)} Assume all the slacks $\theta_{c_i}=0$, the optimal solution graph is a tree, but some of the $I+J-1$ basic variables are $=0$.  Then  erasing all the non-basic arcs, and all the basic arcs with $\eta_{s_j,c_i} = 0$, and using FCFS for the remaining graph will achieve complete weak resource pooling and be throughput optimal with processing rate $\mu^*$.

{\rm (iii)}  If for some $c_i$, $\theta_{c_i}>0$, let $C_1=\{c_i: \theta_{c_i}=0\}$ and let $S_1=\S(C_1)$, and assume that the subgraph $S_1,C_1$ is connected.   Then  in the solution graph $S_1,C_1$ are not connected to the remaining nodes.  Furthermore:  formulate the LP (\ref{eqn.LP}) for  the subsystem $S_1,C_1$ with the corresponding arcs of $\G$.  Then for this smaller problem either {\rm (i)} or {\rm(ii)} holds, and under FCFS complete resource pooling holds, the processing rate is $\mu_1=\mu^*$ and this policy is throughput optimal for $C_1$.

{\rm (iv)}  In the case of {\rm(iii)}, formulating the  LP {\rm(\ref{eqn.LP})} for the subgraph of $\S\backslash S_1,\C\backslash C_1$, the optimal solution will  have $\mu_2>\mu_1$.  Continuing in this way one obtains a unique decomposition of the the system to subgraphs $(S_1,C_1),\ldots,(S_L,C_L)$ each of which has an optimal tree solution, such that under FCFS it will have complete resource pooling, moving at rates $\mu_L>\cdots>\mu_1$, and these rates are maximal throughput for $(S_\ell,C_\ell)$ conditional on retaining the solutions of $(S_1,C_1),\ldots,(S_{\ell-1},C_{\ell-1})$.
\end{theorem}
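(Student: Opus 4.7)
My plan rests on three pillars: the forest structure of basic optimal solutions (Lemma \ref{thm.basictree}), LP duality with complementary slackness between (\ref{eqn.LP}) and its DP, and the tree-FCFS analysis of Theorem \ref{thm.tree}.

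I would first dispatch parts (i) and (ii). Given an optimal basic solution with all $\theta_{c_i}=0$, Lemma \ref{thm.basictree} gives that the positive-$\eta$ arcs form a forest; observation (v) ensures every server and every customer is incident to at least one such arc, so this forest spans $\S\cup\C$ and, in case (i), is a spanning tree with $I+J-1$ edges. The tight server and customer constraints read
\begin{equation*}
\sum_{c_i\in\C(s_j)}\eta^*_{s_j,c_i}=1,\qquad
\sum_{s_j\in\S(c_i)}\frac{\mu_{s_j,c_i}}{\alpha_{c_i}}\eta^*_{s_j,c_i}=\mu^*,
\end{equation*}
which is exactly system (\ref{eqn.treepooling}) of Theorem \ref{thm.tree} on the compatibility graph obtained by erasing non-basic (and, in (ii), degenerate basic) arcs. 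Theorem \ref{thm.tree}(ii)--(iii) then yields complete (case (i)) or weak complete (case (ii)) resource pooling under FCFS on the erased graph, with pooled rate $\mu^*$ and matching rates (\ref{eqn.treematching}). Throughput optimality follows from Theorem \ref{thm.maxthroughput}, since $\mu^*$ upper-bounds any policy's throughput on $\G$ and the erased graph is a subgraph of $\G$.

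The crux of (iii) is the disconnection claim: no positive arc of the solution graph joins $S_1$ to $\C\setminus C_1$ (the other direction is ruled out by $S_1=\S(C_1)$). I would argue by contradiction. Suppose $\eta^*_{s,c^*}>0$ with $s\in S_1$ and $c^*\in\C\setminus C_1$. Complementary slackness gives $\theta_{c^*}>0\Rightarrow z_{c^*}=0$, and then $\eta^*_{s,c^*}>0\Rightarrow y_s=\frac{\mu_{s,c^*}}{\alpha_{c^*}}z_{c^*}=0$. Dual feasibility $y_s\ge\frac{\mu_{s,c}}{\alpha_c}z_c$ forces $z_c=0$ for every $c\in\C(s)\cap C_1$ (a non-empty set, since $s\in S_1$). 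For each such $c$, tightness of its customer constraint in $C_1$ guarantees some $s'\in\S(c)\subseteq S_1$ with $\eta^*_{s',c}>0$, whence $y_{s'}=0$ by CS. Iterating, alternating outward $\G$-feasibility steps from servers with $y=0$ to their $C_1$-neighbours with inward solution-graph steps from customers with $z=0$ to their positive-$\eta$ servers, and invoking the $\G$-connectedness of $(S_1,C_1)$ together with the fact that every $c\in C_1$ has at least one positive incoming arc in the solution, I would sweep $z_c=0$ across all of $C_1$. Combined with $z_c=0$ for $c\in\C\setminus C_1$ this contradicts $\sum_{c_i\in\C}z_{c_i}=1$.

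With disconnection in hand, the restriction of $(\eta^*,\mu^*)$ to $(S_1,C_1)$ is feasible for LP (\ref{eqn.LP}) on that subgraph with value $\mu^*$, and is optimal there (any $\tilde\mu>\mu^*$ combined with the restriction to $(\S\setminus S_1,\C\setminus C_1)$ would exceed the LP optimum on $\G$). All customer constraints in the sub-LP being tight, parts (i) or (ii) apply and give (weak) complete resource pooling on the sub-tree under FCFS with throughput $\mu_1=\mu^*$, and throughput optimality on $(S_1,C_1)$ via Theorem \ref{thm.maxthroughput}. For (iv), formulating LP (\ref{eqn.LP}) on $(\S\setminus S_1,\C\setminus C_1)$ and restricting $(\eta^*,\mu^*)$ gives a feasible but suboptimal point, since every customer constraint for $c\in\C\setminus C_1$ is slack by $\alpha_c\theta_c>0$; one can strictly increase $\mu$ by at least $\min_{c\in\C\setminus C_1}\theta_c>0$, giving $\mu_2>\mu_1$. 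Recursing on the reduced graph produces the chain $(S_1,C_1),\ldots,(S_L,C_L)$ with throughputs $\mu_L>\cdots>\mu_1$, each layer throughput optimal under FCFS conditional on those before it. The hardest step is the propagation in the disconnection argument: one must carefully interleave outward $\G$-feasibility with inward complementary-slackness steps and verify that $\G$-connectedness of $(S_1,C_1)$, combined with tightness of the $C_1$-customer constraints, suffices to sweep $z=0$ over the whole of $C_1$.
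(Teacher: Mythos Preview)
Your handling of (i) and (ii) is essentially the paper's: invoke Lemma~\ref{thm.basictree} for the tree structure, recognise the tight LP equalities as system~(\ref{eqn.treepooling}), apply Theorem~\ref{thm.tree} for (weak) complete resource pooling on the reduced graph, and conclude throughput optimality via Theorem~\ref{thm.maxthroughput}. Part (iv) is also treated in the same spirit as the paper.

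For the disconnection claim in (iii) you take a genuinely different route. The paper argues on the \emph{primal}: if some $s_j\in S_1$ has $\eta^*_{s_j,c_k}>0$ with $c_k\notin C_1$, one shifts a small amount of $s_j$'s allocation from $c_k$ (whose customer constraint is slack) to a neighbour in $C_1$ and argues this can only improve the objective, contradicting optimality. You instead run a \emph{dual} complementary-slackness propagation, deducing $y_s=0$ from $z_{c^*}=0$ and then attempting to sweep $z=0$ across all of $C_1$.

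There is a real gap in your propagation. The outward step (server $\to$ customer) uses dual feasibility and hence all $\G$-arcs, but the inward step (customer $\to$ server) requires $\eta^*_{s',c}>0$ and therefore only travels along \emph{solution-graph} arcs. $\G$-connectedness of $(S_1,C_1)$ is a statement about $\G$-arcs, so a $\G$-edge $(s',c)$ with $c\in C_1$, $z_c=0$, and $\eta^*_{s',c}=0$ cannot be crossed by your sweep. Concretely: take $S_1=\{s,s'\}$, $C_1=\{c_1,c_2\}$ with $\S(c_2)=\{s'\}$, $\S(c_1)=\{s,s'\}$, and an optimal basic solution in which $\eta^*_{s',c_2}=1$, $\eta^*_{s',c_1}=0$, while $s$ splits its allocation between $c_1$ and some $c^*\notin C_1$. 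Your sweep from $s$ reaches $c_1$ (via the outward step) but cannot reach $s'$ (the only $\G$-edge from $c_1$ to $s'$ has $\eta^*_{s',c_1}=0$), hence never reaches $c_2$; and indeed in the complementary dual one has $z_{c_2}=1$, $y_{s'}>0$, so no contradiction arises. Thus the interleaving of ``outward $\G$-feasibility'' with ``inward solution-graph'' steps does not propagate through $\G$-connectedness alone, and your flagged ``hardest step'' does not go through as written. The paper's primal shifting argument is the intended mechanism here.
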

\begin{proof}
(i) If in an optimal basic solution all the slacks $\theta_{c_i}=0$ then the solution will have $\mu>0$ and $I+J-1$ basic variables $\eta_{s_j,c_i}$ which by Lemma \ref{thm.basictree} have no loops and hence the solution graph is a tree.  We assume that all the arcs in the tree have $\eta_{s_j,c_i}>0$.   If we use only the arcs of the tree, we have a system with a tree bipartite graph, and by Section \ref{sec.treegraph}, this system under FCFS will have complete resource pooling and processing capacity $\mu^*$.   By Theorem \ref{thm.maxthroughput} this will be throughput optimal.

(ii) If some of the arcs of the solution tree have $\eta_{s_j,c_i}=0$, then by Theorem \ref{thm.tree} the system with only the arcs of the solution graph under FCFS will have complete weak resource pooling, with processing capacity $\mu^*$.   By Theorem \ref{thm.maxthroughput} this will be throughput optimal

(iii) Consider  $c_i\in C_1$, and assume that for some $s_j \in \S(c_i)$ and $c_k \not\in C_1$, the optimal solution has $\eta_{s_j,c_k} >0$.   it is then possible to reduce $\eta_{s_j,c_k} >0$ and increase $\eta_{s_j,c_i}$ for all $c_i\in C_1$, without violating the feasibility of the solution.  But this modified solution can only increase the objective value.  This proves that in the solution graph $S_1,C_1$ is not connected to any other parts of the system.  Hence, solving the reduced problem for $S_1,C_1$ the optimal solution will have $\theta_{c_i}=0,\,c_i\in C_1$, the solution graph will be a tree, and the optimal value for the reduced problem will be $\mu_1=\mu^*$.

(iv)  Clearly if for some $c_i$, $\theta_{c_i}>0$ then there must exist $(C_1,S_1)$ with a connected subgraph such that the conditions of (c) hold and $\mu_1$  equal to the optimal $\mu^*$ the value for the whole system.  This then is maximum throughput for $C_1$.  If we remove this $C_1$ and its servers, we can continue to decompose the remaining graph.
\end{proof}

The results of Theorem \ref{thm.optimal} are for a particular basic solution.  If there are several basic solutions, one might ask whether when using all the arcs of a non-basic solution and FCFS policy, there will be complete resource pooling and maximum throughput.  We do not currently know the answer in general.  For the special case of customer dependent service (CD) the answer is positive:  As shown in Section \ref{sec.customerdependent}, using the full bipartite graph we get complete resource pooling and maximum throughput under condition (\ref{eqn.CRPCD}).


\section{Exploration of the server dependent case under general service distributions}
\label{sec.simulation}

We have shown in Section \ref{sec.serverdependent} that the fluid model for the server-dependent case with general renewal arrivals and service times is the same as the fluid model for the Poisson exponential case.  In particular, the necessary and sufficient condition for complete resource pooling is insensitive to the service time distribution.  However, our fluid model analysis does not provide enough information to calculate the matching rates $r_{ij}$ for this more general case.  It is tempting to conjecture that if the system is overloaded, the matching rates will be the same as for the Poisson exponential case, and thus will be given by the matching rates calculated for the FCFS infinite bipartite matching model of \cite{adan-weiss:11}.

This, however, is not the case.  A simulation study reveals that in general, the matching rates in an overloaded, server-dependent system are sensitive to the service time distribution.  The matching rates of such a system under non-exponential service time distribution are very close to those under the exponential distribution, yet they are different, in a statistically significant manner.

We considered three topologies for the system, labeled 1--3, shown in Figure~\ref{fig.topologies}.  The topologies were parameterized by the customer type probabilities $\alpha$ and service rates $\mu$ as shown in Table~\ref{tab.simulation.parameters}.  For each topology, we used three service time distributions: Pareto (denoted by the subscript `p'), and two versions of the uniform distribution (`$\mathrm{u}_1$' and `$\mathrm{u}_2$').  In the Pareto case, we used a distribution having the density $f(x) = 3\gamma (\gamma x + 1)^{-4},\  x \ge 0$.  This Pareto distribution has only first and second finite moments, and is parameterized by a scale parameter $\gamma$, so that its mean is $1/2 \gamma$.  Thus, to achieve a service rate $\mu_{j}$, we set $\gamma = \mu_{j}/2$.  The two uniform distributions are $U(0, 2/\mu_{j}$) and $U(.9/\mu_{j}, 1.1/\mu_{j})$.

\begin{figure}[htp]
\centering
\includegraphics[width=.6\textwidth]{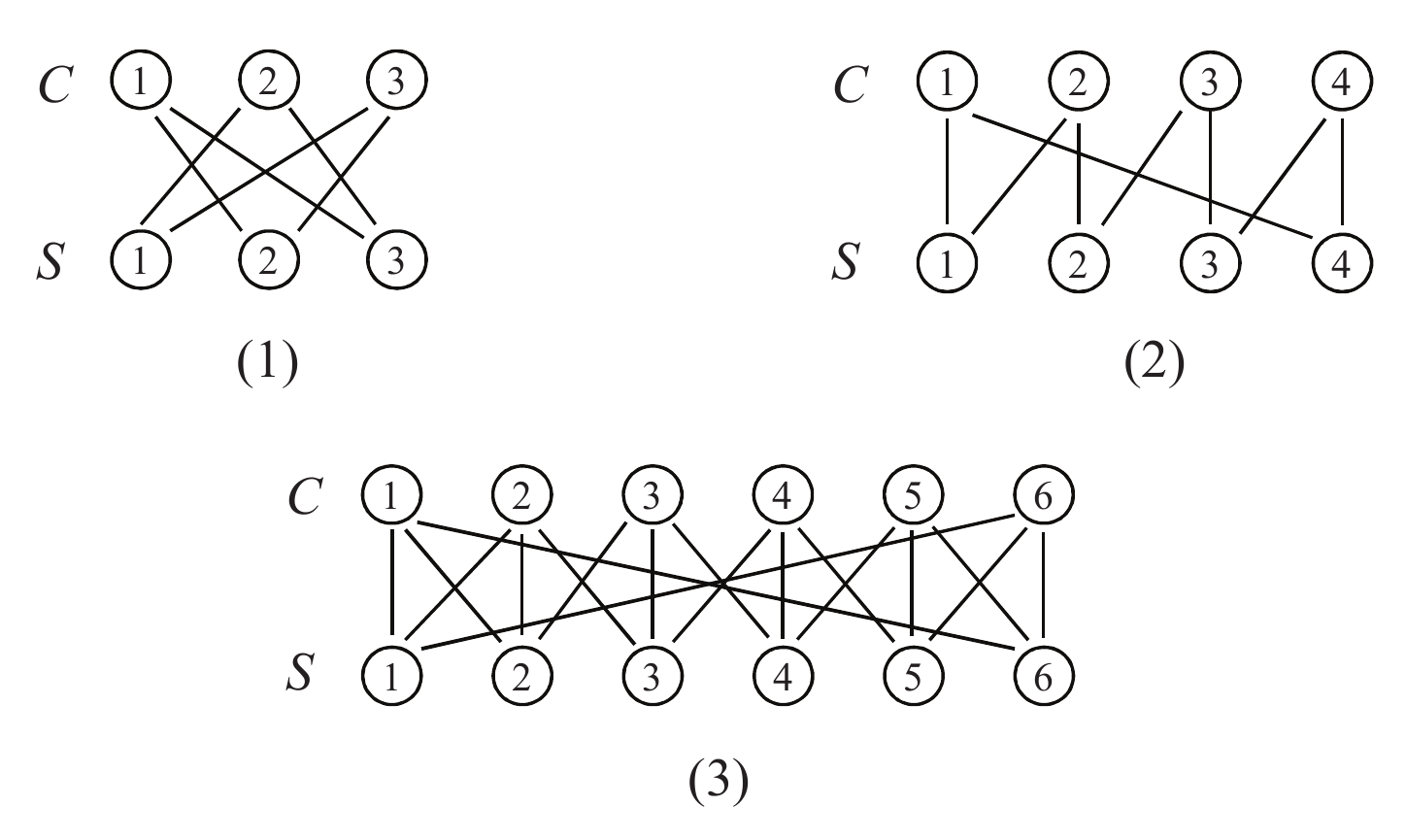}
\caption{Topologies of the systems for the simulation study.}\label{fig.topologies}
\end{figure}

\begin{table}[h]
\centering
\begin{tabular}{*{7}{c}}
\toprule
system 	& $\alpha$ 							& $\mu$ 									& exponential 	& Pareto 	& Uniform~1 & Uniform~2 \\
\midrule
1			& $(.2, .6, .2)$					& $(.4, .2, .4)$						& .285			& .299		& .535		& .074	\\
2			& $(.1, .4, .4, .1)$				& $(.4, .3, .2, .1)$					& .528			& *			& .0078		& *	\\
3			& $(.1, .2, .2, .1, .2, .2)$	& $(.05, .1, .15, .2, .2, .3)$	& .636			& *			& *			& *	\\
\bottomrule
\end{tabular}
\caption{System parameters for the simulation study, and resulting $p$-values of the Hotelling's $T^2$ test.  Asterisks denote $p$-value $< 10^{-15}$.}
\label{tab.simulation.parameters}
 \end{table}

In each simulation replication, the system was initialized with all servers simultaneously starting service of successive customers, each customer being randomly chosen from the server's compatibility set. To let the system approach steady state, it was first run for 100,000 service completions as a warmup period.  After warmup, the system was run for additional 1,000,000 service completions, and the fraction of services of customer type $c_i$ by server $s_j$ was recorded.  This procedure was repeated 100 times, and each element of the final estimated matrix $\widehat{r}$ is therefore a mean of 100 simulated fractions.  The simulation was carried out using the R programming language (www.r-project.org).

For each model, the matrix $r$ was analytically computed as described in \cite{adan-weiss:11}, and the estimated matrices $\widehat{r}_\mathrm{p}$, $\widehat{r}_\mathrm{u_1}$, and $\widehat{r}_\mathrm{u_2}$ were computed by simulation.  The resulting matrices are shown below.  The entries of the estimated matrices are invariably very close to the theoretical $r$ values, yet when comparing them using Hotelling's $T^2$ test (see below), it turns out in most cases that they are different in a statistically significant manner; see the last three columns of Table~\ref{tab.simulation.parameters}.  Interestingly, the matching rates in system 1 appear to be insensitive to the service time distribution.  As a control for the veracity of our simulation, we simulated the system also under exponential service time distribution, and as expected, did not get any significant test results; see column~4 of Table~\ref{tab.simulation.parameters}.

Hotelling's $T^2$ test is the multivariate generalization of the ubiquitous Student's $t$ test.  In each simulation replication, the non-zero entries of the empirical matching rate matrix $\widehat{r}$ (those corresponding to service compatibility) may be thought of as a realization of a random vector.  The entries of this vector, however, are dependent, as they must sum to~1.  The null hypothesis of the test is that the mean of this vector is the corresponding vector derived from the theoretical matching rate matrix $r$.  The test's statistics $T^2$ is a scaled sum of the squared deviations of the observed vectors from the hypothesized mean vector; under the null hypothesis, it possesses asymptotically an $F$ distribution.  To make the empirical covariance matrix of the observed (simulated) vectors invertible, the last entry of each vector was omitted.  For more details on Hotelling's $T^2$ test, see~\cite{krzanowski:00}.

\medskip

\noindent{\large \bf System 1}

\begin{align*}
r &=
\begin{pmatrix}
0 	  & .1 & .1 \\
.3  & 0   & .3 \\
.1  & .1 & 0
\end{pmatrix}, &
\widehat{r}_\mathrm{p} &=
\begin{pmatrix}
0 & .09996 & .10006 \\
.29987 & 0 & .30013 \\
.1 & .09997 & 0
\end{pmatrix} \\
\\
\widehat{r}_\mathrm{u_1} &=
\begin{pmatrix}
0 & .1 & .09996 \\
.30002 & 0 & .30003 \\
.10001 & .09998 & 0
\end{pmatrix}, &
\widehat{r}_\mathrm{u_2} &=
\begin{pmatrix}
0 & .10002 & .10009 \\
.29999 & 0 & .29991 \\
.10002 & .09998 & 0
\end{pmatrix} \\
\end{align*}

\medskip
\noindent{\large \bf System 2}

\begin{align*}
r &=
\begin{pmatrix}
.06443 & 0 & 0 & .03557 \\
.3356 & .06443 & 0 & 0 \\
0 & .2356 & .1644 & 0 \\
0 & 0 & .03557 & .06443
\end{pmatrix}, &
\widehat{r}_\mathrm{p} &=
\begin{pmatrix}
.06477 & 0 & 0 & .03524 \\
.33519 & .0648 & 0 & 0 \\
0 & .23523 & .16478 & 0 \\
0 & 0 & .03531 & .06468
\end{pmatrix} \\
\\
\widehat{r}_\mathrm{u_1} &=
\begin{pmatrix}
.06447 & 0 & 0 & .03553 \\
.33549 & .06447 & 0 & 0 \\
0 & .23556 & .16447 & 0 \\
0 & 0 & .03554 & .06446
\end{pmatrix} , &
\widehat{r}_\mathrm{u_2} &=
\begin{pmatrix}
.06465 & 0 & 0 & .03537 \\
.33535 & .06461 & 0 & 0 \\
0 & .2354 & .16465 & 0 \\
0 & 0 & .03535 & .06463
\end{pmatrix}
\end{align*}

\medskip
\noindent{\large \bf System 3}

\begin{align*}
r &=
\begin{pmatrix}
.004584 & .009497 & 0 & 0 & 0 & .08592 \\
.03825 & .06356 & .09819 & 0 & 0 & 0 \\
0 & .02694 & .04084 & .1322 & 0 & 0 \\
0 & 0 & .01097 & .02815 & .06087 & 0 \\
0 & 0 & 0 & .03963 & .06184 & .09854 \\
.007164 & 0 & 0 & 0 & .07729 & .1155
\end{pmatrix} \\
\\
\widehat{r}_\mathrm{p} &=
\begin{pmatrix}
.00462 & .01039 & 0 & 0 & 0 & .08499 \\
.03853 & .06318 & .0983 & 0 & 0 & 0 \\
0 & .02638 & .04062 & .13298 & 0 & 0 \\
0 & 0 & .01098 & .02787 & .0612 & 0 \\
0 & 0 & 0 & .03923 & .06149 & .09927 \\
.00681 & 0 & 0 & 0 & .07741 & .11575
\end{pmatrix} \\
\\
\widehat{r}_\mathrm{u_1} &=
\begin{pmatrix}
.00465 & .00894 & 0 & 0 & 0 & .08645 \\
.03781 & .06386 & .09834 & 0 & 0 & 0 \\
0 & .02718 & .04078 & .132 & 0 & 0 \\
0 & 0 & .0109 & .02819 & .06092 & 0 \\
0 & 0 & 0 & .03979 & .06198 & .09823 \\
.00754 & 0 & 0 & 0 & .07713 & .11531
\end{pmatrix} \\
\\
\widehat{r}_\mathrm{u_2} &=
\begin{pmatrix}
.00476 & .00864 & 0 & 0 & 0 & .08661 \\
.0374 & .06409 & .09857 & 0 & 0 & 0 \\
0 & .02727 & .04055 & .13213 & 0 & 0 \\
0 & 0 & .01088 & .02802 & .0611 & 0 \\
0 & 0 & 0 & .03985 & .06206 & .09811 \\
.00784 & 0 & 0 & 0 & .07684 & .11529
\end{pmatrix}
\end{align*}

A similar phenomenon occurs with the steady-state distribution of the \emph{server span} $Y_J(t) - Y_1(t)$, which is the distance between the leftmost and rightmost servers along the stream of customers (note that the minimal value of the server span is $J - 1$).  Figure~\ref{fig.Q_hists} shows the distribution of the server span for the three systems under the same four service time distributions, as estimated from simulation.  Clearly, the distribution in each system is sensitive to the service time distribution.

\begin{figure}[htp]
\centering
\includegraphics[width=\textwidth]{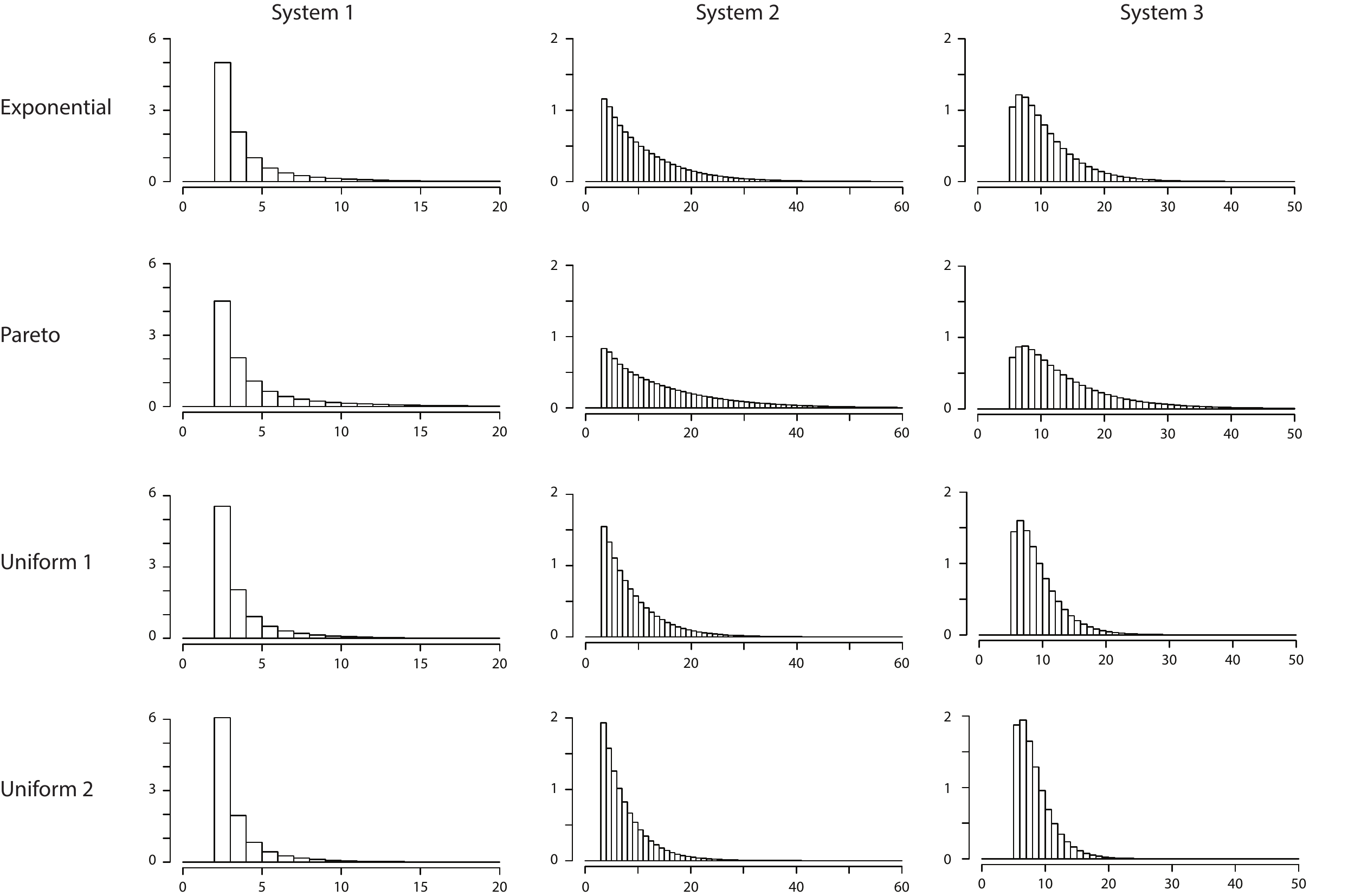}
\caption{Server span distribution for the three systems under four service time distributions.  Counts on the vertical axes are $\times10^7$.}\label{fig.Q_hists}
\end{figure}

From \cite{adan-busic-mairesse-weiss:15}, the steady-state distribution of the server permutations in the exponential case is given by
\begin{equation*}
\pi_R(S_1,\ldots,S_J) = B^s\prod_{\ell=1}^{J-1}(\beta_{\{S_1,\ldots,S_\ell\}} - \alpha_{\U\{S_1,\ldots,S_\ell\}})^{-1},
\end{equation*}
where $B^s$ is a normalizing factor.  This distribution was estimated by simulation also for the non-exponential cases, and the results for systems~1 and~2 are shown in Tables~\ref{tab.permutations.1} and~\ref{tab.permutations.2} (the results for system~3 are omitted due to the size of the table --- $6! = 720$ rows).  The deviations of the estimated values from the theoretical ones are small, but statistically significant: when using again Hotelling's $T^2$ test, the $p$-values in all 6 cases (2 systems $\times$ 3 distributions) was $< 10^{-15}$.  In contrast, the $p$-values for systems~1 and~2 under simulated exponential service times were 0.372 and 0.443, respectively.  Thus, the steady-state distribution of the server permutations is also sensitive to the service time distribution.

\begin{table}[h]
\centering
\begin{tabular}{*{7}{c}}
\toprule
permutation & \begin{tabular}[c]{@{}c@{}}theoretical\\(exponential)\end{tabular}& Pareto	& Uniform~1 	& Uniform~2 \\
\midrule
1-2-3	& .1	& .1018	& .0973	& .0912 \\
1-3-2	& .2	& .1996	& .2006	& .2010 \\
2-1-3	& .2	& .1988	& .2021	& .2078 \\
2-3-1	& .2	& .1988	& .2021	& .2078 \\
3-1-2	& .2	& .1993	& .2006	& .2010 \\
3-2-1	& .1	& .1017	& .0974	& .0912 \\
\bottomrule
\end{tabular}
\caption{Steady-state distribution of server permutations, system~1.}
\label{tab.permutations.1}
 \end{table}

\begin{table}[h]
\centering
\begin{tabular}{*{7}{c}}
\toprule
permutation & \begin{tabular}[c]{@{}c@{}}theoretical\\(exponential)\end{tabular}& Pareto	& Uniform~1 	& Uniform~2 \\
\midrule
1-2-3-4	& .0232	& .0289	& .0194	& .0169 \\
1-2-4-3	& .0077	& .0089	& .0068	& .0061 \\
1-3-2-4	& .0116	& .0133	& .0106	& .0103 \\
1-3-4-2	& .0023	& .0026	& .0022	& .0022 \\
1-4-2-3	& .0058	& .0083	& .0045	& .0038 \\
1-4-3-2	& .0035	& .005	& .0027	& .0024 \\
2-1-3-4	& .0310	& .0286	& .0319	& .0323 \\
2-1-4-3	& .0103	& .0092	& .0110	& .0116 \\
2-3-1-4	& .0929	& .0933	& .0888	& .0839 \\
2-3-4-1	& .0929	& .1042	& .0849	& .0792 \\
2-4-1-3	& .0077	& .0064	& .0088	& .0096 \\
2-4-3-1	& .0232	& .0215	& .0243	& .0250 \\
3-1-2-4	& .0232	& .0201	& .0251	& .0257 \\
3-1-4-2	& .0046	& .0039	& .0052	& .0054 \\
3-2-1-4	& .1394	& .1319	& .1451	& .1499 \\
3-2-4-1	& .1394	& .1426	& .1416	& .1456 \\
3-4-1-2	& .0139	& .0120	& .0150	& .0153 \\
3-4-2-1	& .0697	& .0680	& .0695	& .0690 \\
4-1-2-3	& .0232	& .0231	& .0223	& .0209 \\
4-1-3-2	& .0139	& .0137	& .0137	& .0137 \\
4-2-1-3	& .0232	& .0199	& .0260	& .0276 \\
4-2-3-1	& .0697	& .0686	& .0692	& .0671 \\
4-3-1-2	& .0279	& .0248	& .0301	& .0313 \\
4-3-2-1	& .1394	& .1411	& .1410	& .1451 \\
\bottomrule
\end{tabular}
\caption{Steady-state distribution of server permutations, system~2.}
\label{tab.permutations.2}
 \end{table}


\end{document}